\numberwithin{equation}{section}
\newcommand{\OO}{\mathcal{O}}
\newcommand{\FF}{\mathcal{F}}
\newcommand{\ev}{\operatorname{ev}}
\newcommand{\rk}{\operatorname{rk}}
\newcommand{\cok}{\operatorname{cok}}
\newcommand{\Eff}{\overline{\operatorname{Eff}}}
\newcommand{\Nef}{\operatorname{Nef}}
\newcommand{\Mor}{\operatorname{Mor}}
\newcommand{\SP}{\operatorname{SP}}
\newcommand{\cU}{\mathcal{U}}
\newcommand{\Hilb}{\operatorname{Hilb}}
\newcommand{\Sec}{\operatorname{Sec}}
\newcommand{\Jac}{\operatorname{Jac}}
\newtheorem{theorem}{Theorem}[section]
\newtheorem{lemma}[theorem]{Lemma}
\newtheorem{proposition}[theorem]{Proposition}
\newtheorem{corollary}[theorem]{Corollary}
\theoremstyle{definition}
\newtheorem{definition}[theorem]{Definition}
\newtheorem{remark}[theorem]{Remark}
\newtheorem{example}[theorem]{Example}
\newtheorem{construction}[theorem]{Construction}
\begin{document}

\title{Non-free curves on Fano varieties}
\author{Brian Lehmann}
\address{Department of Mathematics \\
Boston College  \\
Chestnut Hill, MA \, \, 02467}
\email{lehmannb@bc.edu}

\author{Eric Riedl}
\address{Department of Mathematics \\
University of Notre Dame  \\
255 Hurley Hall \\
Notre Dame, IN 46556}
\email{eriedl@nd.edu}

\author{Sho Tanimoto}
\address{Graduate School of Mathematics, Nagoya University, Furocho Chikusa-ku, Nagoya, 464-8602, Japan}
\email{sho.tanimoto@math.nagoya-u.ac.jp}

\subjclass[2010]{Primary : 14H10. Secondary : 	14E30, 14J45.}

\begin{abstract}
Let $X$ be a smooth Fano variety over $\mathbb{C}$ and let $B$ be a smooth projective curve over $\mathbb{C}$. Geometric Manin's Conjecture predicts the structure of the irreducible components $M \subset \mathrm{Mor}(B, X)$ parametrizing curves which are non-free and have large anticanonical degree.  Following ideas of \cite{LRT22},  we prove the first prediction of Geometric Manin's Conjecture describing such irreducible components.  As an application, we prove that there is a proper closed subset $V \subset X$ such that all non-dominant components of $\mathrm{Mor}(B, X)$ parametrize curves in $V$, verifying an expectation put forward by Victor Batyrev. We also demonstrate two important ways that studying $\mathrm{Mor}(B,X)$ differs from studying the space of sections of a Fano fibration $\mathcal{X} \to B$.

\end{abstract}

\maketitle


\section{Introduction}

Let $X$ be a smooth complex Fano variety and let $B$ be a smooth complex projective curve.  We let $\Mor(B,X)$ denote the scheme parametrizing morphisms from $B$ to $X$ as in \cite[Chapter I 1.9 Definition and 1.10 Theorem]{Kollar}.
Recall that a morphism $s: B \to X$ is called a free curve if $s^{*}T_{X}$ is globally generated and $H^{1}(B,s^{*}T_{X}) = 0$.  Our goal is to classify the irreducible components of the morphism scheme $\Mor(B,X)$ which parametrize only non-free curves of large anticanonical degree.  Ever since Mori's inspirational work constructing rational curves on a Fano variety $X$, the moduli spaces $\Mor(B,X)$ have been a subject of intense investigation.  An important motivation behind our work comes from arithmetic geometry: by Batyrev's heuristics (\cite{Bat88}), one can deduce Manin's conjecture over global function fields from certain properties of components of the morphism scheme $\Mor(B,X)$.

\cite{LRT22} studies an analogous problem in a more general setting.  A Fano fibration $\pi: \mathcal{X} \to B$ is a morphism of smooth complex projective varieties such that the generic fiber of $\pi$ is a geometrically integral Fano variety.  We will denote by $\Sec(\mathcal{X}/B)$ the Hilbert scheme of sections of $\pi$.  Note that $\Mor(B,X)$ is isomorphic to the space of sections of the trivial fibration $\Sec(X \times B / B)$; we distinguish the two settings by referring to the study of $\Mor(B,X)$ as the ``absolute setting'' and the study of $\Sec(\mathcal{X}/B)$ as the ``relative setting''.

\cite{LRT22} showed that the irreducible components $M \subset \Sec(\mathcal{X}/B)$ which parametrize non-free sections can be classified using the Fujita invariant.  More precisely, such components come from $B$-morphisms $f: \mathcal{Y} \to \mathcal{X}$ which increase the Fujita invariant along the generic fiber.  Passing to the absolute setting, we can apply the results of \cite{LRT22} to the trivial fibration $\pi: X \times B \to B$.  However, it is natural to wonder whether the conclusions of the theorem also hold true in the absolute setting: can we account for non-free curves using morphisms of the form $f: Y \to X$ instead of morphisms of the form $f: \mathcal{Y} \to X \times B$? The question is subtle, and not all of the results of \cite{LRT22} hold in this context.

The goal of the present paper is to clarify which results from the relative setting carry over to the absolute setting. We show that the qualitative results about non-free curves mostly carry over and that the boundedness results carry over for non-dominant families of curves.  To prove these results we need to modify the arguments of \cite{LRT22} while keeping the same overall structure.  There are several minor points of distinction between the absolute and relative settings which we highlight throughout the paper.  On the other hand, we give counterexamples to show where key parts of the argument of \cite{LRT22} simply do not hold in the absolute setting. We would like to emphasize that whenever the two settings diverge, the relative setting seems to be more natural.

\subsection{Geometric Manin's Conjecture}

Geometric Manin's Conjecture is a set of principles that unifies predictions in the arithmetic setting (such as the function field version of Manin's Conjecture) and predictions in the geometric setting (such as the Cohen-Jones-Segal conjecture).  The key invariant in Geometric Manin's Conjecture is the Fujita invariant.

\begin{definition}
\label{defi:a-invariant}
Let $X$ be a smooth projective variety over a field of characteristic $0$ and let $L$ be a big and nef $\mathbb{Q}$-Cartier divisor on $X$.  The Fujita invariant of $(X,L)$ is
\begin{equation*}
a(X,L) = \min \{ t\in \mathbb{R} \mid  K_X + tL \textrm{ is pseudo-effective }\}.
\end{equation*}
If $L$ is nef but not big, we formally set $a(X,L) = \infty$.  If $X$ is singular, choose a resolution of singularities $\phi: X' \to X$ and define $a(X,L)$ to be $a(X',\phi^{*}L)$.  (The choice of resolution does not affect the value by \cite[Proposition 2.7]{HTT15}.)
\end{definition}

\subsection{Main results}

The first part of Geometric Manin's Conjecture predicts that all non-free curves come from morphisms $f: Y \to X$ which increase the Fujita invariant, and our results verify this prediction over the field $\mathbb C$ of complex numbers. It is the analogue in the absolute setting of \cite[Theorem 1.3]{LRT22}. Before stating our main theorem, let us introduce one piece of terminology we use frequently: let $Y \to T$ be a morphism between quasi-projective varieties such that a general fiber is irreducible. Let $Z \to T$ be any morphism between algebraic varieties such that the image meets with the Zariski open locus parametrizing irreducible fibers.
Then the fiber product $Z\times_T Y$ admits a unique irreducible component dominating $B$ which we call the ``main component.'' With this terminology, here is our main theorem:

\begin{theorem} \label{intro:maintheorem1}
Let $X$ be a smooth projective Fano variety defined over $\mathbb C$ and let $B$ be a complex smooth projective curve.  There are constants $\xi = \xi(\dim(X),g(B))$ and $\Gamma = \Gamma(\dim(X),g(B))$ such that the following holds.  Suppose that $M \subset \Mor(B,X)$ is an irreducible component parametrizing non-free maps $s: B \to X$ of anticanonical degree $\geq \xi$.  Let $\mathcal{U}^{\nu}$ be the normalization of the universal family over $M$ and let $ev: \mathcal{U}^{\nu} \to X$ be the evaluation map.  Then either:
\begin{enumerate}
\item $ev$ is not dominant.  Then the subvariety $Y$ swept out by the curves parametrized by $M$ satisfies $a(Y,-K_{X}) \geq a(X,-K_{X})$.
\item $ev$ is dominant and the general map parametrized by $M$ is birational onto its image. 
Let $\overline{\mathcal U}^\nu$ be a normal projective compactification of $\mathcal{U}^{\nu}$ with a morphism $\overline{ev} : \overline{\mathcal U}^\nu \to X$ extending $ev$. Then the finite part $f: Y \to X$ of the Stein factorization of $\overline{ev}$ satisfies
\begin{equation*}
a(Y,-f^{*}K_{X}) = a(X,-K_{X}).
\end{equation*}
Furthermore, there is a rational map $\phi: Y \dashrightarrow T$ such that the following properties hold.  Let $F$ denote a smooth resolution of the closure of a general fiber of $\phi$.  Then
\begin{enumerate}
\item $a(F,-f^{*}K_{X}|_{F}) = a(X,-K_{X})$.
\item The Iitaka dimension of $K_{F} - a(X,-K_{X})K_{X}|_{F}$ is equal to $0$.
\item 
Let $s: B \to X$ denote a general map parametrized by $M$, let $s': B \to Y$ denote the corresponding map to the Stein factorization and let $W$ denote the main component of $B \times_{T} Y$.  Then the image in $M$ of the parameter space of deformations of the map $(\mathrm{id},s'): B \to W$ has codimension at most $\Gamma$ in $M$.
\end{enumerate}
\item $ev$ is dominant and the general map parametrized by $M$ is not birational to its image.  In this case the image of the general map is a rational curve of anticanonical degree $2$.  Thus $ev$ factors rationally through a generically finite map $g: \mathcal{V} \to X$ where $\mathcal{V}$ is a projective model of a universal family $\mathcal U \to N$ of rational curves of anticanonical degree $2$ on $X$ where $N$ is an irreducible open locus of the Hilbert scheme of $X$.  
In particular $a(\mathcal{V},-g^{*}K_{X}) = a(X,-K_{X})$. 
\end{enumerate}
\end{theorem}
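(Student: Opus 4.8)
\emph{Proof proposal.} The plan is to transpose the strategy of \cite{LRT22} from the relative to the absolute setting, with the Fujita invariant and the Stein factorization of the evaluation map as the organizing devices, and to pinpoint and work around the places where the relative argument genuinely uses the base $B$. First I would form the universal family $\mathcal{U}^{\nu}\to M$ and factor the evaluation map as $ev\colon\mathcal{U}^{\nu}\xrightarrow{q}Y\xrightarrow{f}X$ with $q$ having connected fibers and $f$ finite onto its image. The trichotomy of the theorem is then immediate: either $f$ is not surjective (case (1), with $Y$ the sweep-out), or $f$ is finite onto $X$ and the general $s$ is generically injective (case (2)), or $f$ is finite onto $X$ and the general $s$ is not generically injective (case (3)). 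Two inputs recur throughout: the deformation-theoretic lower bound $\dim M\geq\chi(B,s^{*}T_{X})=-K_{X}\cdot s+(1-g)\dim X$, valid for every component, which is \emph{not} sharp here because $M$ parametrizes non-free maps; and Fujita-invariant upper bounds for the dimension of a family of curves of bounded anticanonical degree sweeping out a given subvariety. Balancing these produces the constant $\xi$, and the uniformity of $\xi$ over all $X$ of a given dimension and all genera $\le g$ relies on boundedness of the subvarieties of $X$ that compute small values of the Fujita invariant.

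For case (1), pass to a resolution $\tilde{Y}\to Y$ on which a general curve of $M$ lifts; since $-K_{X}|_{Y}$ is big and nef, comparing $\dim M\geq -K_{X}\cdot s+(1-g)\dim X$ with the upper bound for families sweeping out $\tilde{Y}$ forces $a(Y,-K_{X})\geq a(X,-K_{X})$ provided $-K_{X}\cdot s\geq\xi$; this is the absolute analogue of the corresponding step in \cite{LRT22} and adapts with only cosmetic changes. For case (3), the general $s$ factors as $B\xrightarrow{h}\tilde{C}\to C\subset X$ with $\tilde{C}$ the normalization of the image $C$ and $\deg h=m\geq2$, so $-K_{X}\cdot s=m(-K_{X}\cdot C)$. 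A dimension count over the data $(C,h)$ --- the deformations of $C$ contribute at most $(-K_{X}\cdot C)\,a(X,-K_{X})+O(\dim X)$ on a general member, and the degree-$m$ covers $h$, together with $\Aut(\tilde{C})$ and the moduli of $\tilde{C}$, contribute at most $O(m+g)$ when $\tilde{C}\cong\mathbb{P}^{1}$ and are essentially rigid otherwise (de~Franchis, respectively finiteness of isogenies when $g(\tilde{C})=1$) --- combined with $\dim M\geq m(-K_{X}\cdot C)+(1-g)\dim X$ forces, once $\xi$ is large, first $\tilde{C}\cong\mathbb{P}^{1}$ and then $-K_{X}\cdot C=2$. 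With $N$ the relevant open locus of the Hilbert scheme of conics and $\mathcal{V}\to N$ the universal family, $g\colon\mathcal{V}\to X$ is generically finite (as $\dim\mathcal{V}=\dim N+1=\dim X$) and dominant, so the ramification formula gives $a(\mathcal{V},-g^{*}K_{X})\leq a(X,-K_{X})$, while the reverse inequality holds because $\mathcal{V}$ is swept out by the conics, on each of which $K_{\mathcal{V}}+a(X,-K_{X})(-g^{*}K_{X})$ has nonnegative degree; hence equality.

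Case (2) is the heart of the argument. The ramification formula $K_{Y}=f^{*}K_{X}+R$ with $R\geq0$ gives $a(Y,-f^{*}K_{X})\leq a(X,-K_{X})$, and the reverse inequality follows, as in case (1), from a dimension count for the lifts of curves of $M$ to $Y$; so $a(Y,-f^{*}K_{X})=a(X,-K_{X})$. I would then take $\phi\colon Y\dashrightarrow T$ to be the canonical (adjoint rigid) fibration of the pair $(Y,-f^{*}K_{X})$ --- the fibration associated to the section ring of $K_{Y}+a(X,-K_{X})(-f^{*}K_{X})$ --- so that a smooth model $F$ of a general fiber satisfies (a) $a(F,-f^{*}K_{X}|_{F})=a(X,-K_{X})$ and (b) $\kappa(K_{F}-a(X,-K_{X})K_{X}|_{F})=0$ by construction. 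For (c), a general lift $s\colon B\to Y$ composes with $\phi$ to a map $B\to T$ and so lifts to the main component $W$ of $B\times_{T}Y$; the assertion that deformations of $(\mathrm{id},s)\colon B\to W$ fill $M$ to codimension $\leq\Gamma$ is precisely that $M$ cannot move the $T$-coordinate of its curves by more than a bounded amount, which I would extract from (b) --- vanishing of the adjoint Iitaka dimension rigidifies the fibers of $\phi$ encountered along the curve --- together with a dimension count bounding $\dim M-\dim(\text{sublocus in }W)$ by a constant $\Gamma(\dim X,g)$.

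I expect two things to be the main obstacles. The first is that the passage through the trivial fibration $X\times B\to B$, which would let one quote \cite{LRT22} verbatim, does not suffice: it produces a cover of $X\times B$ over $B$ whose generic fiber need not spread out to a cover of $X$, so the structural input for case (2) --- existence and rigidity properties of the adjoint rigid fibration in a form adapted to sections over $B$ --- must be re-derived directly for $Y\to X$, and it is here that the counterexamples flagged in the introduction show the relative and absolute settings truly diverge. The second is purely quantitative: making $\xi$ and $\Gamma$ depend only on $\dim X$ and $g$, which rests on boundedness statements (for subvarieties computing small Fujita invariants, and for the strata produced by the fibration $\phi$) that must be set up carefully to survive the transition to the absolute setting.
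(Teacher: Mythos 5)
Your outline of cases (1) and (3) tracks the paper closely (they are Theorem \ref{theo:ainvariantandsectionsabsolutecase} and Theorem \ref{theo:factorthroughrational} plus the factoring results of Section 2), and your observation that the adjoint degree along a general curve is forced to be small by comparing the lower bound $\dim M \geq -K_X\cdot s + \dim X(1-g)$ with the upper bound for a dominant family on a resolution of $Y$ is exactly the inequality \eqref{equation:degreebounds} used in the paper. The gap is in case (2), which you yourself call the heart of the matter. You take $\phi\colon Y\dashrightarrow T$ to be the Iitaka fibration of $K_Y + a(X,-K_X)(-f^*K_X)$, so that (a) and (b) hold by construction, and you then assert (c) on the grounds that adjoint rigidity of the fibers ``rigidifies'' the family plus an unspecified dimension count. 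That last step is not an argument, and it is precisely the step the paper warns cannot be run naively in the absolute setting: Example \ref{exam:notadjointrigid} and Theorem \ref{theo:absadjrigid} show that passing through many general points of $Y$ does \emph{not} force adjoint-rigid behavior of $(Y,-f^*K_X)$; the rigidification available is only relative over $B$ (for sections of $W\to B$). Concretely, to bound the codimension in (c) for your choice of $\phi$ you must bound, uniformly in $Y$, the dimension of the image of $M$ in the space of maps $B\to T$. The bounded adjoint degree $(K_{\widetilde Y}+ L)\cdot s \leq g(B)(\dim X + 2)$ only controls $\deg(\phi\circ s)^*A$ after writing $m(K_{\widetilde Y}+L) = \phi^*A + (\text{effective})$ for some $m$ that defines the Iitaka fibration, and neither $m$ nor the geometry of the base $T$ (hence the dimension of low-degree maps $B\to T$) is bounded in terms of $\dim X$ and $g(B)$ without a substantial effective/boundedness input that you do not supply. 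So as written, (c) is unproven, and there is no evident way to complete it along your lines without importing exactly the machinery you hoped to avoid.

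The paper's route is structurally the reverse of yours and avoids this problem. The map $\phi$ is \emph{not} taken to be the canonical fibration of $(Y,-f^*K_X)$: it is produced by Theorem \ref{theo:absfoliationresult} from the Harder--Narasimhan filtration of $T_{Y}$ with respect to the class of the curve, using the Grauert--M\"ulich theorem for morphism spaces (Theorem \ref{theo:gmformorphisms}) to compare slopes on $Y$ with slopes of the restricted bundle, Theorem \ref{theo:HNisfoliation} to see the relevant piece is a foliation, and \cite{CP19} to algebraize it. With this choice the codimension bound (c) is essentially built in, because the quotient $T_{Y}/\mathcal{F}$ has uniformly bounded slope along the curve; and the lifted section goes through $\geq \Gamma+1$ general points of the fibration $W\to B$, so (b) -- adjoint rigidity of the fibers -- is then \emph{deduced} by applying the relative result \cite[Theorem 7.10]{LRT22} to $W\to B$ and transferring from the generic fiber over $B$ to a general fiber of $\phi$ by invariance of plurigenera. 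This ordering (construct $\phi$ so that (c) is automatic, then prove (a),(b) via the relative theory over $B$) is exactly what handles the relative/absolute discrepancy you flagged; your ordering (get (a),(b) for free, then prove (c)) leaves the hard, genuinely new content of the theorem unestablished.
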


\begin{remark}
In the analogous result \cite[Theorem 1.3]{LRT22} condition (2).(b) is slightly different.  This highlights one distinction between the absolute and relative settings -- the Iitaka dimension does not behave the same.  See Theorem \ref{theo:absadjrigid} and Example \ref{exam:notadjointrigid} for details.
\end{remark}

This theorem is significant for two reasons.  First, it explicitly identifies the ``accumulating varieties'' which have more curves than expected.  Since the Fujita invariant is easier to work with than families of curves, we obtain a practical method for understanding non-free curves.  Second, by connecting the geometry of curves to the Fujita invariant we gain access to powerful methods from the Minimal Model Program.  In particular, \cite{Birkar21} yields a boundedness statement for the varieties considered in Theorem \ref{intro:maintheorem1}.

Next we prove the following theorem using the boundedness results of accumulating maps proved in \cite[Section 8]{LRT22}:

\begin{theorem} \label{intro:maintheorem2}
Let $X$ be a smooth projective Fano variety defined over $\mathbb C$ and let $B$ be a complex smooth projective curve.  There is a proper closed subset $V \subsetneq X$ such that if $M \subset \Mor(B,X)$ is an irreducible component parametrizing a non-dominant family of curves then every curve parametrized by $M$ is contained in $V$.
\end{theorem}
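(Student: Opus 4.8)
The plan is to bootstrap from the structure theorem, Theorem~\ref{intro:maintheorem1}, together with the boundedness of accumulating maps proved in \cite[Section~8]{LRT22} (which itself rests on the boundedness of Fano type varieties \cite{Birkar21}). The first point is that a non-dominant component $M$ automatically parametrizes non-free curves: if a general $s\colon B\to X$ in $M$ were free, then $s^{*}T_{X}$ would be globally generated, so the differential of $\ev\colon \mathcal{U}^{\nu}\to X$ would already be surjective on the tangent directions coming from deformations of $s$, hence surjective at a general point, hence $\ev$ would be dominant. Thus every non-dominant component is a component of non-free maps and Theorem~\ref{intro:maintheorem1} applies to it, necessarily in case~(1).

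Next I would separate the curves by anticanonical degree. The sublocus of $\Mor(B,X)$ parametrizing maps of anticanonical degree $<\xi$ is of finite type, so it has finitely many irreducible components; each non-dominant one among them sweeps out a closed subvariety $Y_{i}\subsetneq X$, and $V_{0}:=\bigcup_{i}Y_{i}$ is a proper closed subset of $X$. For a component $M$ whose curves have anticanonical degree $\geq \xi$, Theorem~\ref{intro:maintheorem1}(1) gives that $Y_{M}:=\overline{\ev(\mathcal{U}^{\nu})}$ is a proper closed subvariety with $a(Y_{M},-K_{X})\geq a(X,-K_{X})=1$.

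The core of the proof is to show that all the $Y_{M}$ appearing in the high-degree case are contained in one proper closed subset $V_{1}$. By the boundedness of accumulating maps \cite[Section~8]{LRT22}, the maps $f\colon Y\to X$ that can occur here form a bounded family, so $\bigcup_{M}Y_{M}$ lies in the image of a finite-type family and its closure $V_{1}$ is a genuine closed subset; what must be shown is that $V_{1}\neq X$, i.e.\ that this bounded family of accumulating subvarieties cannot dominate $X$. The subvarieties with $a(Y_{M},-K_{X})>a(X,-K_{X})$ are already known to lie in a proper closed subset of $X$. The delicate case is the borderline one $a(Y_{M},-K_{X})=a(X,-K_{X})$: here the curves of $M$ still sweep out $Y_{M}$, so they form a family dominating $Y_{M}$, and I would try to induct on $\dim Y_{M}$ — pass to a resolution $\widetilde{Y}_{M}$ carrying the big and nef pullback of $-K_{X}$, re-run the structure theory there (through the fibration $\phi$ of Theorem~\ref{intro:maintheorem1}(2)), and exploit that if the relevant subvarieties moved in a family dominating $X$, then the exact sequence $0\to N_{C/Y_{M}}\to N_{C/X}\to N_{Y_{M}/X}|_{C}\to 0$, together with freeness of a general $C\in M$ inside $Y_{M}$, would make $N_{C/X}$ globally generated and hence $M$ dominant, a contradiction. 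The induction terminates because $\dim Y_{M}$ strictly drops, and at the bottom the curves have bounded degree and sweep out finitely many proper subvarieties. Then $V:=V_{0}\cup V_{1}$ works.

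I expect the borderline case $a(Y_{M},-K_{X})=a(X,-K_{X})$ to be the main obstacle. A bounded family of subvarieties with this equality can certainly dominate $X$ — the lines on $\mathbb{P}^{n}$ are such a family — so the conclusion genuinely uses that these particular subvarieties carry non-dominant families of non-free curves of unbounded degree, not just that they have the right Fujita invariant. A secondary, more technical difficulty, which the paper emphasizes throughout, is that the boundedness results of \cite{LRT22} are developed in the relative setting of $\Sec(\mathcal{X}/B)$ and must be re-checked in the absolute setting: the identification $\Mor(B,X)\cong \Sec(X\times B/B)$ does not transport a proper closed subset of $X\times B$ to one of $X$, so the whole argument has to be carried out intrinsically on $X$.
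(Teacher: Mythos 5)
Your setup matches the paper's: the low-degree non-dominant components live in a finite-type locus and sweep out a proper closed $V_{0}$, and the high-degree case is reduced to the boundedness of accumulating maps from \cite[Section~8]{LRT22}, with the genuine technical burden being (as you note) that the exceptional closed set produced there lives in $X\times B$ and must be arranged to be of the form $V\times B$ for a proper closed $V\subset X$. That re-working of \cite[Construction~8.4, Theorem~8.7]{LRT22} is exactly what Section~\ref{sec:boundedness} does, culminating in Theorem~\ref{theo:positivebounded}.

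The gap is in your resolution of the borderline case $a(Y_{M},-K_{X})=a(X,-K_{X})$, which you correctly identify as the crux. The paper's mechanism is not an induction on $\dim Y_{M}$ with a normal-bundle sequence: by the universal property (Theorem~\ref{theo:positivebounded}(5)), a high-degree curve of $M$ not contained in $V$ is the image of an \emph{HN-free section} $C'$ of one fiber of the bounded family of adjoint-rigid fibrations $\widehat{\mathfrak{F}}\to\widehat{\mathfrak{S}}$; by \cite[Lemma~8.5]{LRT22} such a relatively free section deforms together with the fiber so as to dominate an entire irreducible component of $\widehat{\mathfrak{F}}$, and every such component dominates $X$ by construction (non-dominating components are deleted and their images absorbed into $V$). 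This forces $M$ to be dominant, a contradiction. Your substitute step fails on two counts. First, a general $C\in M$ need not be free (or even have generically globally generated normal sheaf) in a resolution of $Y_{M}$: in Example~\ref{exam:notadjointrigid} the non-dominant components $N_{d}$ consist of sections of a $\mathbb{P}^{1}$-bundle over a genus-$801$ curve, which are never free in $\widetilde{Y}_{M}$; the positivity one can actually extract is relative freeness over $B$ of the associated section, which is precisely the hypothesis \cite[Lemma~8.5]{LRT22} needs and which you have not established. Second, even granting global generation of $N_{C/Y_{M}}$ and of $N_{Y_{M}/X}|_{C}$, the deformations of $Y_{M}$ inside the bounded family of accumulating subvarieties are a priori unrelated to deformations of $C$ inside its component $M$; to conclude anything about $M$ you must glue a deformation of $C$ in $Y_{M}$ to a deformation of $Y_{M}$ in $X$ (and lift sections through the exact sequence, which needs an $H^{1}$ vanishing you do not have). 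That gluing is exactly the content of \cite[Lemma~8.5]{LRT22}, so the step you treat as a corollary of the sequence is the actual theorem to be proved.
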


This theorem generalizes earlier results in \cite{LT19} and \cite{LT21b}. The proof essentially follows from boundedness results in \cite{LRT22}. However, we need slight modifications of the results in \cite{LRT22} in order to adjust to the absolute setting.  We carry out this project in Section~\ref{sec:boundedness}.

\begin{remark}
Boundedness theorems lead to an important distinction between the relative and absolute cases.  \cite[Theorem 8.8]{LRT22} shows that all non-free sections of a Fano fibration are accounted for by a bounded family of twists of finitely many morphisms $f: \mathcal{Y} \to \mathcal{X}$.  

In the absolute setting, one might guess that it is possible to ``ignore the twists'': is there a finite family of maps $f: Y \to X$ which account for all non-free curves?  In Example \ref{exam:needtwists} we demonstrate that this is not always possible: even in the absolute setting one must allow twists over $K(B)$.
\end{remark}

The following result is one of the key steps in the proof of Theorem \ref{intro:maintheorem1}.  We present it here as it gives new insight into the geometric significance of the Fujita invariant.  It is an analogue of \cite[Theorem 1.12]{LRT22}.

\begin{theorem} \label{theo:introainvandsections}
Let $X$ be a smooth projective Fano variety defined over $\mathbb C$ and let $B$ be a complex smooth projective curve.  Fix a positive rational number $a$ and a non-negative integer $T$.  There is some constant $\xi = \xi(\dim(X), g(B), a, T)$ with the following property.

Suppose that $Y$ is a smooth projective variety equipped with a morphism $f: Y \to X$ that is generically finite onto its image.   Suppose that $N$ is an irreducible component of $\Mor(B,Y)$ parametrizing a dominant family of curves $C$ on $Y$ which satisfy $-f^{*}K_{X} \cdot C \geq \xi$.  Finally, suppose that
\begin{equation*}
\dim(N) \geq a \cdot \dim(M) - T.
\end{equation*}
Then
\begin{equation*}
a(Y,-f^{*}K_{X}) \geq a.
\end{equation*}
\end{theorem}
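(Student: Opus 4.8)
The plan is to argue by contradiction. Suppose $a(Y,-f^{*}K_{X}) < a$; I will then bound the degree $d := -f^{*}K_{X}\cdot C$ of a general curve parametrized by $N$ from above by a quantity depending only on $\dim(X)$, $g(B)$, $a$ and $T$, contradicting $d\ge\xi$ once $\xi$ is chosen large enough. Here $M\subset\Mor(B,X)$ denotes the irreducible component containing the $f$-images of the curves parametrized by $N$. Observe first that since $N$ parametrizes a dominant family, $f$ is surjective and generically finite, so $-f^{*}K_{X}=f^{*}(-K_{X})$ is big and nef and $a(Y,-f^{*}K_{X})$ is well defined; moreover $K_{Y}-f^{*}K_{X}$ is effective (take top wedge powers of $df\colon f^{*}\Omega^{1}_{X}\to\Omega^{1}_{Y}$), so $a(Y,-f^{*}K_{X})\le 1$. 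Thus if $a>1$ we are already done, since then the gap $a-a(Y,-f^{*}K_{X})\ge a-1>0$ is bounded below; so we may assume $a\le 1$.

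The key step is an upper bound for $\dim(N)$. Because $N$ is dominant, the class of a general curve $C$ it parametrizes is strongly movable, hence lies in the movable cone of $Y$; since $K_{Y}+a(Y,-f^{*}K_{X})(-f^{*}K_{X})$ is pseudo-effective, the duality between movable curves and pseudo-effective divisors yields
\[
\bigl(K_{Y}+a(Y,-f^{*}K_{X})(-f^{*}K_{X})\bigr)\cdot C\ \ge\ 0,
\]
that is, $-K_{Y}\cdot C\le a(Y,-f^{*}K_{X})\cdot d$. On the other hand, for a general member $[s]\in N$ the evaluation map of the universal family is dominant, hence smooth over a dense open subset, which forces $s^{*}T_{Y}$ to be globally generated at the generic point of $B$ up to corank one; standard estimates for vector bundles on a curve then bound $h^{1}(B,s^{*}T_{Y})$ by a constant $c_{1}=c_{1}(\dim(X),g(B))$. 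Since $\dim(N)\le h^{0}(B,s^{*}T_{Y})=-K_{Y}\cdot C+\dim(X)(1-g(B))+h^{1}(B,s^{*}T_{Y})$, we obtain
\[
\dim(N)\ \le\ a(Y,-f^{*}K_{X})\cdot d+\dim(X)(1-g(B))+c_{1}.
\]

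Next I use that $X$ is Fano. The $f$-images of the curves parametrized by $N$ sweep out $X$ and have anticanonical degree $d$, so $M$ parametrizes a dominant family of degree $d$, and deformation theory gives $\dim(M)\ge d+\dim(X)(1-g(B))$. Plugging this and the bound above into the hypothesis $\dim(N)\ge a\cdot\dim(M)-T$, all terms depending only on $\dim(X),g(B),a,T$ collect into one constant $c_{2}$, and one gets $\bigl(a-a(Y,-f^{*}K_{X})\bigr)d\le c_{2}$, hence
\[
a(Y,-f^{*}K_{X})\ \ge\ a-\frac{c_{2}}{\xi}.
\]
In particular, if $a(Y,-f^{*}K_{X})\le 0$ — for instance whenever $Y$ is not uniruled — this already contradicts the hypothesis once $\xi>c_{2}/a$.

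What remains, and what I expect to be the main obstacle, is to upgrade $a(Y,-f^{*}K_{X})\ge a-c_{2}/\xi$ to the desired $a(Y,-f^{*}K_{X})\ge a$ uniformly in $Y$: a priori the values $a(Y,-f^{*}K_{X})$, as $Y$ ranges over all generically finite covers of the fixed $X$, need not avoid an accumulation at $a$ from below. Note that in the remaining range we may assume $0<a(Y,-f^{*}K_{X})<a\le 1$, so $a(Y,-f^{*}K_{X})$ is in fact bounded away from $0$ once $\xi$ is large. I would then handle the upgrade by passing to a minimal model of the pair $\bigl(Y,\,a(Y,-f^{*}K_{X})(-f^{*}K_{X})\bigr)$ — whose structure is close to that of a weak Fano, with the adjoint divisor rigid — and invoking Birkar-type boundedness of Fano-type varieties together with rationality of the Fujita invariant to conclude that, among the pairs relevant here, $a(Y,-f^{*}K_{X})$ takes only finitely many values; in particular these values do not accumulate at $a$ from below, and choosing $\xi$ larger than $c_{2}$ divided by the size of the resulting gap completes the proof.
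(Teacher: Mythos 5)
Your proposal is correct and follows essentially the same route as the paper's proof: bound $\dim(N)$ by the expected dimension plus an $h^{1}$-estimate coming from generic global generation of the normal sheaf of a dominant family (Proposition~\ref{prop:domfamilyexpdim}), pair the movable class $[C]$ against adjoint divisors to force $K_{Y}-a'f^{*}K_{X}$ to be non-pseudo-effective for $a'$ slightly below $a$, and then close the remaining gap by finiteness of Fujita invariants above a fixed threshold. The ``main obstacle'' you flag at the end is precisely the known spectrum theorem of DiCerbo and Han--Li (Theorem~\ref{theo:Dicerbo}, resting on Birkar's results), which the paper simply cites to choose $\epsilon$ with no Fujita invariants in $((1-\epsilon)a,a)$ before fixing $\xi$; the only slip in your write-up --- the claim that $f$ is surjective, used for the dispensable reduction to $a\le 1$ --- is harmless, since $f$ is only generically finite onto its (possibly proper) image and the uniform argument via the spectrum theorem needs no such case split.
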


\

\noindent
{\bf Acknowledgements:}
Brian Lehmann was supported by Simons Foundation grant Award Number 851129.
Eric Riedl was supported by NSF CAREER grant DMS-1945944.  Sho Tanimoto was partially supported by JST FOREST program Grant number JPMJFR212Z, by JSPS Bilateral Joint Research Projects Grant number JPJSBP120219935, by JSPS KAKENHI Grand-in-Aid (B) 23H01067, and by JSPS KAKENHI Early-Career Scientists Grant number 19K14512.

\section{Background} \label{sect:background}

Throughout we work with schemes whose irreducible components are finite type over a field $k$ of characteristic $0$.  A variety is a separated scheme of finite type over $k$ that is reduced and irreducible.  Given a coherent sheaf $\mathcal{F}$ on a variety $V$, we denote by $\mathcal{F}_{tors}$ its torsion subsheaf and by $(\mathcal{F})_{tf}$ the quotient of $\mathcal{F}$ by its torsion subsheaf.

When $X$ is a projective variety, we will let $N^{1}(X)_{\mathbb{R}}$ denote the space of $\mathbb{R}$-Cartier divisors up to numerical equivalence.  In this finite-dimensional vector space we have the pseudo-effective cone $\Eff^{1}(X)$ and the nef cone $\Nef^{1}(X)$.  Dually, we will let $N_{1}(X)_{\mathbb{R}}$ denote the space of $\mathbb{R}$-curves up to numerical equivalence.  Given a curve $C$, we will denote its numerical class by $[C]$.  Inside $N_{1}(X)_{\mathbb{R}}$ we have  the pseudo-effective cone $\Eff_{1}(X)$ and the nef cone $\Nef_{1}(X)$.

\subsection{Vector bundles on curves}
Throughout this paper, $B$ denotes a smooth projective curve defined over $\mathbb C$.
In this section we let $\mathcal{E}$ be a vector bundle of rank $r$ on $B$. We recall several definitions on semistability of $\mathcal E$:

\begin{definition}
Let $B$ be a complex smooth projective curve and let $\mathcal{E}$ be a vector bundle of rank $r$ on $B$.  
We define the slope of $\mathcal{E}$ on $B$ as 
\[
\mu(\mathcal{E}) = \frac{\deg \, \mathcal E}{r}.
\]
We say $\mathcal E$ is unstable if there is a non-zero subsheaf $\mathcal F \subset \mathcal E$ such that
\[
\mu (\mathcal F) > \mu (\mathcal E).
\]
When it is not unstable, we say $\mathcal E$ is semistable.

The Harder-Narasimhan filtration of $\mathcal E$ is a sequence of saturated subsheaves
\[
0 = \mathcal F_0 \subsetneq \mathcal F_1 \subsetneq \cdots \subsetneq \mathcal F_k = \mathcal E,
\]
such that for each $i$, $\mathcal F_{i + 1}/\mathcal F_i$ is semistable and we have
\[
\mu(\mathcal F_{i + 1}/\mathcal F_i) > \mu (\mathcal F_{i + 2}/\mathcal F_{i+1}).
\]
We denote $\mu(\mathcal F_{1})$ and $\mu(\mathcal F_k/\mathcal F_{k-1})$ by $\mu^{\max}(\mathcal E)$ and $\mu^{\min}(\mathcal E)$ respectively.
\end{definition}

\begin{definition}
We say that a coherent sheaf $\mathcal{E}$ on a smooth projective curve $B$ is generically globally generated if the evaluation map
\begin{equation*}
H^{0}(B,\mathcal{E}) \otimes \mathcal{O}_{B} \to \mathcal{E}
\end{equation*}
is surjective at the generic point of $B$.
\end{definition}

We will need the following results concerning the positivity of generically globally generated bundles on curves.

\begin{lemma}[{\cite[Lemma 2.5]{LRT22}}] \label{lemm:genericallygloballygenerated}
Let $B$ be a smooth projective curve.  Suppose that $\mathcal{E}$ is a generically globally generated vector bundle on $B$.  Then every successive quotient $\mathcal{F}_{i}/\mathcal{F}_{i-1}$ in the Harder-Narasimhan filtration of $\mathcal{E}$ satisfies $\mu(\mathcal{F}_{i}/\mathcal{F}_{i-1}) \geq 0$.
\end{lemma}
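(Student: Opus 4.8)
The plan is to reduce the statement to controlling the single Harder--Narasimhan quotient of smallest slope. Recall that in the Harder--Narasimhan filtration $0 = \mathcal{F}_0 \subset \mathcal{F}_1 \subset \cdots \subset \mathcal{F}_k = \mathcal{E}$ the slopes of the successive quotients are strictly decreasing, and that on a smooth curve each $\mathcal{F}_i$ is a saturated subsheaf, so each $\mathcal{F}_i/\mathcal{F}_{i-1}$ is locally free. Hence the minimal slope occurring among the successive quotients is $\mu(\mathcal{F}_k/\mathcal{F}_{k-1}) = \mu(\mathcal{E}/\mathcal{F}_{k-1})$, and it suffices to prove $\mu(\mathcal{E}/\mathcal{F}_{k-1}) \geq 0$; the other quotients then automatically have strictly larger, hence nonnegative, slope.

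The second step is to observe that generic global generation is inherited by quotient bundles. Set $\mathcal{Q} = \mathcal{E}/\mathcal{F}_{k-1}$ and let $q\colon \mathcal{E} \to \mathcal{Q}$ be the projection. Since $\mathcal{E}$ is generically globally generated there is a map $\mathcal{O}_B^{\oplus N} \to \mathcal{E}$ which is surjective at the generic point of $B$; composing with $q$, which is surjective everywhere, yields a map $\psi\colon \mathcal{O}_B^{\oplus N} \to \mathcal{Q}$ that is again surjective at the generic point of $B$.

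The third step is to extract positivity from $\psi$. Let $\mathcal{G} \subseteq \mathcal{Q}$ be the image of $\psi$. As a subsheaf of the vector bundle $\mathcal{Q}$ on a smooth curve it is torsion-free, hence locally free, and it has the same rank as $\mathcal{Q}$ because $\psi$ is generically surjective. Being a quotient of $\mathcal{O}_B^{\oplus N}$, the bundle $\mathcal{G}$ is globally generated, so taking top exterior powers $\det \mathcal{G}$ is a quotient of a trivial bundle, hence a globally generated line bundle, and therefore $\deg \mathcal{G} = \deg \det \mathcal{G} \geq 0$. On the other hand, a full-rank inclusion $\mathcal{G} \hookrightarrow \mathcal{Q}$ of vector bundles on a curve satisfies $\deg \mathcal{Q} = \deg \mathcal{G} + \operatorname{length}(\mathcal{Q}/\mathcal{G}) \geq \deg \mathcal{G}$. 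Combining, $\deg \mathcal{Q} \geq \deg \mathcal{G} \geq 0$, and dividing by $\rk \mathcal{Q} > 0$ gives $\mu(\mathcal{Q}) = \mu(\mathcal{F}_k/\mathcal{F}_{k-1}) \geq 0$, which by the first step completes the proof.

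I do not expect a serious obstacle here; the only points requiring mild care are bookkeeping items, namely checking that the Harder--Narasimhan quotients and the image sheaf $\mathcal{G}$ are genuinely locally free (automatic on a smooth curve) and that ``surjective at the generic point'' is preserved under the compositions used. Everything else is routine slope arithmetic.
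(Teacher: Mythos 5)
Your argument is correct, and since the paper simply imports this lemma from \cite[Lemma 2.5]{LRT22} without reproducing a proof, there is nothing in the text to diverge from. Your three steps are all sound: the reduction to the last Harder--Narasimhan quotient $\mathcal{Q}=\mathcal{E}/\mathcal{F}_{k-1}$ (strictly decreasing slopes), the observation that generic surjectivity of $H^0(B,\mathcal{E})\otimes\mathcal{O}_B\to\mathcal{E}$ persists after composing with the quotient map, and the degree estimate $\deg\mathcal{Q}\geq\deg\mathcal{G}\geq 0$ via the globally generated full-rank image $\mathcal{G}$. For what it is worth, there is a marginally shorter route to the same endpoint that avoids introducing $\mathcal{G}$ and $\det\mathcal{G}$: if $\mu(\mathcal{Q})<0$, then since $\mathcal{Q}$ is semistable any nonzero section would generate a rank-one subsheaf of nonnegative slope, violating semistability, so $H^0(B,\mathcal{Q})=0$; hence every global section of $\mathcal{E}$ lands in $\mathcal{F}_{k-1}$, and the evaluation map factors through a subsheaf of strictly smaller rank, contradicting generic global generation. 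Both arguments are standard and equally rigorous; yours trades the semistability input for the elementary fact that a globally generated line bundle on a curve has nonnegative degree.
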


\begin{lemma}[{\cite[Lemma 2.8]{LRT22}}] \label{lemm:ggh1bound}
Let $B$ be a smooth projective curve of genus $g$.  Suppose that $\mathcal{E}$ is a generically globally generated vector bundle on $B$.  Then
\begin{enumerate}
\item $h^{0}(B,\mathcal{E}) \leq \deg(\mathcal{E}) + \rk(\mathcal{E})$.
\item $h^{1}(B,\mathcal{E}) \leq g(B) \rk(\mathcal{E})$.
\end{enumerate}
\end{lemma}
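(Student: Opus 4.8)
The plan is to extract a single short exact sequence from the hypothesis of generic global generation and read off both bounds from the associated long exact cohomology sequence; the Harder--Narasimhan input of Lemma \ref{lemm:genericallygloballygenerated} turns out not to be needed, though it gives an alternative route.

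First I would choose sections. Write $r = \rk(\mathcal{E})$. Since $\mathcal{E}$ is generically globally generated, the $K(B)$-vector space $\mathcal{E} \otimes_{\mathcal{O}_B} K(B)$ is spanned by the images of global sections, so we may pick $s_1, \dots, s_r \in H^0(B,\mathcal{E})$ whose images form a $K(B)$-basis of this generic fiber. They define a morphism $\psi \colon \mathcal{O}_B^{\oplus r} \to \mathcal{E}$ which is an isomorphism after $\otimes_{\mathcal{O}_B} K(B)$. The kernel of $\psi$ is a subsheaf of the torsion-free sheaf $\mathcal{O}_B^{\oplus r}$ vanishing at the generic point, hence zero, so $\psi$ is injective. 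Its cokernel $Q$ vanishes at the generic point, hence is a torsion sheaf supported at finitely many closed points; therefore $H^1(B,Q) = 0$ and $h^0(B,Q) = \mathrm{length}(Q) = \deg(Q) = \deg(\mathcal{E})$, using $\deg(\mathcal{O}_B^{\oplus r}) = 0$. (In passing this reproves $\deg(\mathcal{E}) \geq 0$, consistent with Lemma \ref{lemm:genericallygloballygenerated}.)

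Now apply cohomology to $0 \to \mathcal{O}_B^{\oplus r} \to \mathcal{E} \to Q \to 0$, obtaining the exact sequence $0 \to H^0(\mathcal{O}_B^{\oplus r}) \to H^0(\mathcal{E}) \to H^0(Q) \to H^1(\mathcal{O}_B^{\oplus r}) \to H^1(\mathcal{E}) \to H^1(Q) = 0$. For part (1): $h^0(B,\mathcal{E}) \leq h^0(B,\mathcal{O}_B^{\oplus r}) + h^0(B,Q) = r + \deg(\mathcal{E})$. For part (2): the surjection $H^1(B,\mathcal{O}_B^{\oplus r}) \twoheadrightarrow H^1(B,\mathcal{E})$ (valid since $H^1(B,Q)=0$) gives $h^1(B,\mathcal{E}) \leq h^1(B,\mathcal{O}_B^{\oplus r}) = r\cdot h^1(B,\mathcal{O}_B) = r\cdot g(B)$.

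There is essentially no serious obstacle; the only points requiring care are that the chosen sections can be taken to be generically linearly independent --- which is exactly what generic global generation provides --- and that the cokernel $Q$ is genuinely torsion, so that it contributes nothing to $H^1$ and contributes exactly $\deg(\mathcal{E})$ to $H^0$. An alternative argument inducts on $\rk(\mathcal{E})$ by peeling off the maximal destabilizing subbundle $\mathcal{F}_1$ (of slope $\geq 0$ by Lemma \ref{lemm:genericallygloballygenerated}, with $\mathcal{E}/\mathcal{F}_1$ again generically globally generated), but this requires Clifford-type estimates for semistable bundles, whereas the short exact sequence above is completely elementary.
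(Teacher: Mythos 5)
Your argument is correct: extracting $r$ generically independent sections gives the exact sequence $0 \to \mathcal{O}_B^{\oplus r} \to \mathcal{E} \to Q \to 0$ with $Q$ torsion of length $\deg(\mathcal{E})$, and both bounds fall out of the long exact sequence exactly as you say. The paper itself does not reprove this lemma but imports it from \cite{LRT22}, where the proof is essentially this same elementary comparison with the trivial bundle, so your route matches the cited one; the Harder--Narasimhan alternative you mention is indeed unnecessary here.
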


\subsection{Morphism spaces}  \label{sect:spaceofmorphisms}

Suppose that $Z$ is a smooth projective variety and $B$ is a smooth projective curve.  If $M \subset \Mor(B,Z)$ is an irreducible component, we have
\begin{equation*}
-K_{Z} \cdot s_{*}B + \dim(Z)(1-g(B)) \leq \dim(M) \leq h^{0}(B,s^{*}T_{Z})
\end{equation*}
where $s: B \to Z$ is any curve parametrized by $M$. (See \cite[Chapter 2, 1.7 Theorem]{Kollar} for this claim.) More generally, suppose we fix closed points $p_{1},\ldots,p_{m} \in B$ and closed points $q_{1},\ldots,q_{m} \in Z$ and consider the sublocus $\Mor(B,Z;p_{i} \mapsto q_{i})$ of morphisms $s$ such that $s(p_{i}) = q_{i}$ for every $i$.  If $M \subset \Mor(B,Z;p_{i} \mapsto q_{i})$ is an irreducible component then
\begin{equation*}
-K_{Z} \cdot s_{*}B + \dim(Z)(1-g(B)) - m \dim(Z) \leq \dim(M) \leq h^{0}(B,s^{*}T_{Z}(-p_{1}-\ldots-p_{m}))
\end{equation*}
where $s: B \to Z$ is any morphism parametrized by $M$. (Again see \cite[Chapter 2, 1.7 Theorem]{Kollar} for this claim.)

For a morphism $s: B \to Z$ to a smooth projective variety $Z$, we denote by $N_{s}$ the normal sheaf of $s$, i.e.~the cokernel of $T_{B} \to s^{*}T_{Z}$.  If $s$ is a general member of a dominant family of morphisms to $Z$ then the normal sheaf $N_{s}$ will be generically globally generated.  %

\begin{proposition} \label{prop:domfamilyexpdim}
Let $Z$ be a smooth projective variety and let $B$ be a smooth projective curve.  Suppose that $M$ is an irreducible component of $\Mor(B,Z)$ parametrizing a dominant family of curves.  Letting $s$ denote a general map parametrized by $M$, we have
\begin{equation*}
-K_{Z} \cdot s_{*}B + \dim(Z)(1-g(B)) \leq \dim(M) \leq -K_{Z} \cdot s_{*}B + \dim(Z) + 2g(B).
\end{equation*}
\end{proposition}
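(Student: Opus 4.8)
The lower bound is already the general lower bound on $\dim M$ for a component of $\Mor(B,Z)$ recorded at the start of Section~\ref{sect:spaceofmorphisms}, so the whole content lies in the upper bound, and the plan is to sharpen the tangent-space estimate $\dim M \le h^{0}(B,s^{*}T_{Z})$ (from the same place) using Riemann--Roch on $B$ and the positivity of the normal sheaf. Let $s\colon B\to Z$ be a general map parametrized by $M$; I would first dispose of the case where $s$ is constant (then $-K_{Z}\cdot s_{*}B=0$ and $M$ is the locus of constant maps, of dimension $\dim Z$, so both inequalities are immediate) and henceforth assume $s$ is nonconstant. Then $T_{B}\to s^{*}T_{Z}$ is injective with cokernel the normal sheaf $N_{s}$, and since $M$ is a dominant family $N_{s}$ is generically globally generated by the remark in Section~\ref{sect:spaceofmorphisms}. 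Using Riemann--Roch on $B$ and $\deg(s^{*}T_{Z}) = -K_{Z}\cdot s_{*}B$ (projection formula), I would rewrite
\[
h^{0}(B,s^{*}T_{Z}) = -K_{Z}\cdot s_{*}B + \dim(Z)(1-g(B)) + h^{1}(B,s^{*}T_{Z}),
\]
so that the desired upper bound $\dim M \le -K_{Z}\cdot s_{*}B + \dim(Z) + 2g(B)$ follows once I show $h^{1}(B,s^{*}T_{Z}) \le (\dim(Z)+2)\,g(B)$, the arithmetic $\dim(Z)(1-g(B)) + (\dim(Z)+2)g(B) = \dim(Z) + 2g(B)$ being immediate.

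To bound $h^{1}(B,s^{*}T_{Z})$ I would apply cohomology to $0\to T_{B}\to s^{*}T_{Z}\to N_{s}\to 0$; since the long exact sequence ends $\cdots\to H^{1}(B,T_{B})\to H^{1}(B,s^{*}T_{Z})\to H^{1}(B,N_{s})\to 0$, we get $h^{1}(B,s^{*}T_{Z}) \le h^{1}(B,T_{B}) + h^{1}(B,N_{s})$. For the first term, Serre duality gives $h^{1}(B,T_{B}) = h^{0}(B,2K_{B})$, which is $0$ if $g(B)=0$, is $1$ if $g(B)=1$, and is $3g(B)-3$ if $g(B)\ge 2$, hence always $\le 3g(B)$. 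For the second term I would pass to the torsion-free quotient: in $0\to (N_{s})_{tors}\to N_{s}\to (N_{s})_{tf}\to 0$ the torsion subsheaf is a skyscraper, so $H^{1}(B,(N_{s})_{tors})=0$ and $h^{1}(B,N_{s}) \le h^{1}(B,(N_{s})_{tf})$; and $(N_{s})_{tf}$ is a vector bundle on the smooth curve $B$ which remains generically globally generated (a quotient of the generically globally generated sheaf $N_{s}$), so Lemma~\ref{lemm:ggh1bound}(2) yields $h^{1}(B,(N_{s})_{tf}) \le g(B)\,\rk((N_{s})_{tf}) = g(B)(\dim(Z)-1)$. Adding, $h^{1}(B,s^{*}T_{Z}) \le 3g(B) + g(B)(\dim(Z)-1) = (\dim(Z)+2)g(B)$, as needed.

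The only genuinely non-formal ingredient here is the generic global generation of $N_{s}$, equivalently of $(N_{s})_{tf}$: this is precisely where the hypothesis that $M$ parametrizes a dominant family enters, through the remark in Section~\ref{sect:spaceofmorphisms}, and it is the hook that makes the positivity estimate of Lemma~\ref{lemm:ggh1bound} available. Everything else is Riemann--Roch and bookkeeping. I expect the only step needing a little care to be the last one, since Lemma~\ref{lemm:ggh1bound} is stated for vector bundles while $N_{s}$ can carry torsion along the ramification locus of $s$; the skyscraper argument above handles this. (Note that the resulting bound is not sharp once $g(B)\ge 1$, as up to $3$ is discarded in bounding $h^{1}(B,T_{B})$, but it is clean and sufficient.)
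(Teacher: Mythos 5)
Your proof is correct and follows essentially the same route as the paper: the lower bound is the standard one, and the upper bound comes from $\dim M \le h^{0}(B,s^{*}T_{Z})$, Riemann--Roch, and the estimate $h^{1}(B,s^{*}T_{Z}) \le h^{1}(B,T_{B}) + h^{1}(B,(N_{s})_{tf}) \le 3g(B) + (\dim(Z)-1)g(B)$ using generic global generation of $N_{s}$ and Lemma~\ref{lemm:ggh1bound}. The only differences are cosmetic: you spell out the Riemann--Roch bookkeeping, the skyscraper argument for discarding $(N_{s})_{tors}$, and the trivial constant-map case, all of which the paper leaves implicit.
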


\begin{proof} 
In this situation the normal sheaf $N_{s}$ is generically globally generated. Indeed, let $V$ be the tangent space of $M$ at $s$.  We can identify $V$ as a subspace of $H^0(B, s^*T_Z)$. Let $\pi : \mathcal U = M \times B \to M$ be the universal family over $M$ with the evaluation map $ev : \mathcal U \to Z$. Then the tangent space of $\mathcal U$ at a general point $p \in B$ above $s$ is given by $V \oplus T_{B, p}$.  Because $ev$ is dominant, the evaluation map induces a surjection $V \oplus T_{B, p} \to T_{Z, s(p)}$. In particular, this implies that $H^0(B, s^*T_Z) \oplus T_{B, p} \to T_{Z, s(p)}$ is surjective. This means that $H^0(B, s^*T_Z) \to N_{s, p} = s^*T_{Z, s(p)}/T_{B, p}$ is surjective. Since $H^0(B, s^*T_Z) \to N_{s, p}$ factors through $H^0(B, N_s) \to N_{s, p}$, we conclude that $H^0(B, N_s) \to N_{s, p}$ is surjective. Hence $N_s$ is generically globally generated. 
Now we have
\begin{align*}
h^{1}(B,s^{*}T_{Z}) & \leq h^{1}(B,T_{B}) + h^{1}(B,N_{s}) \\
& = h^{1}(B,T_{B}) + h^{1}(B,(N_{s})_{tf}) \\
& \leq 3g(B) + (\dim(Z)-1)g(B)
\end{align*}
where in the last line we have used the elementary inequality $h^{1}(B,T_{B}) \leq 3g(B)$ and have applied Lemma \ref{lemm:ggh1bound} to $(N_{s})_{tf}$.  The desired statement follows.
\end{proof}

The deformation theory of a map $s: B \to Z$ is best behaved under a stronger assumption on the positivity of $s^{*}T_{Z}$.

\begin{definition}
Let $Z$ be a smooth projective variety and let $B$ be a smooth projective curve.  We say that a map $s: B \to Z$ is HN-free if $\mu^{min}(s^{*}T_{Z}) \geq 2g(B)$.
\end{definition}

The following result summarizes the key properties of HN-free maps.

\begin{lemma}[{\cite[Lemma 3.6]{LRT22}}]  \label{lemma:hnfreecurves}
Let $Z$ be a smooth projective variety and let $B$ be a smooth projective curve.  Suppose that $s: B \to Z$ is an HN-free map.  Then:
\begin{enumerate}
\item $H^{1}(B,s^{*}T_{Z}) = 0$ and for any closed point $p \in B$ we have $H^{1}(B,s^{*}T_{Z}(-p)) = 0$.
\item $s^{*}T_{Z}$ is globally generated.
\item Let $b$ be the smallest slope of a quotient of successive terms in the Harder-Narasimhan filtration for $s^{*}T_{Z}$.  Then deformations of $s : B \to Z$ can pass through at least $\lfloor b \rfloor - 2g(B) + 1$ general points of $Z$.
\end{enumerate}
\end{lemma}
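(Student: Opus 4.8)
\textbf{Proof proposal for Lemma \ref{lemma:hnfreecurves}.}

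The plan is to deduce everything from the Harder--Narasimhan structure of $s^{*}T_{Z}$ together with the classical Riemann--Roch/Serre duality estimates for vector bundles on curves, and then package the point-passing statement as a count of sections with prescribed vanishing.

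First I would prove (1) and (2) together. Write $\mathcal{E} = s^{*}T_{Z}$ and let $0 = \mathcal{F}_{0} \subset \mathcal{F}_{1} \subset \cdots \subset \mathcal{F}_{k} = \mathcal{E}$ be the Harder--Narasimhan filtration, with semistable quotients $\mathcal{Q}_{i} = \mathcal{F}_{i}/\mathcal{F}_{i-1}$ of slope $\mu_{i}$, so that $\mu^{min}(\mathcal{E}) = \mu_{k} \geq 2g(B)$ by the HN-free hypothesis; hence $\mu_{i} \geq 2g(B)$ for all $i$. For a semistable bundle $\mathcal{Q}$ on $B$ of slope $\mu(\mathcal{Q}) > 2g(B) - 2$ one has $H^{1}(B,\mathcal{Q}) = 0$: indeed by Serre duality $H^{1}(B,\mathcal{Q})^{\vee} = \Hom(\mathcal{Q}, \omega_{B})$, and a nonzero such map would produce a quotient of $\mathcal{Q}$ of slope $\leq \deg \omega_{B} = 2g(B)-2 < \mu(\mathcal{Q})$, contradicting semistability. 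Applying this to each $\mathcal{Q}_{i}$ (slopes $\geq 2g(B) > 2g(B)-2$) and using the long exact sequences of the filtration gives $H^{1}(B,\mathcal{E}) = 0$. The same argument applied to $\mathcal{E}(-p)$, whose HN quotients are the $\mathcal{Q}_{i}(-p)$ with slopes $\geq 2g(B) - 1 > 2g(B)-2$, yields $H^{1}(B, \mathcal{E}(-p)) = 0$; this proves (1). For global generation (2): a semistable bundle $\mathcal{Q}$ with $\mu(\mathcal{Q}) \geq 2g(B)$ is globally generated, since for every point $p$ the twist $\mathcal{Q}(-p)$ has slope $\geq 2g(B) - 1 > 2g(B) - 2$ and is still semistable, so $H^{1}(B, \mathcal{Q}(-p)) = 0$, and then the sequence $0 \to \mathcal{Q}(-p) \to \mathcal{Q} \to \mathcal{Q}|_{p} \to 0$ shows $H^{0}(B,\mathcal{Q}) \to \mathcal{Q}|_{p}$ is surjective. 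Global generation is preserved under extensions (a quotient map onto the top piece lifts after splitting the fiber), so an induction up the HN filtration, using $H^{1}(B, \mathcal{F}_{i-1}(-p)) = 0$ to lift sections, shows $\mathcal{E}$ is globally generated.

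For (3), set $b = \mu^{min}(\mathcal{E}) = \mu_{k}$; I want to show that through any $m \leq \lfloor b \rfloor - 2g(B) + 1$ general points $q_{1},\dots,q_{m}$ of $Z$ (with the constraint that they be hit at chosen general points $p_{1},\dots,p_{m} \in B$) there is a deformation of $s$. By the deformation-theoretic dimension bounds in Section \ref{sect:spaceofmorphisms}, it suffices to show $H^{1}(B, \mathcal{E}(-p_{1} - \cdots - p_{m})) = 0$, for then $\Mor(B,Z; p_{i} \mapsto q_{i})$ is smooth of the expected dimension at $[s]$ and the evaluation map $B \times \Mor \to Z^{m}$ at $(p_{1},\dots,p_{m})$ is dominant near $[s]$ (one also uses that the first-order evaluation map $H^{0}(B,\mathcal{E}) \to \bigoplus \mathcal{E}|_{p_{i}}$ is surjective, which follows from the vanishing by the usual exact sequence). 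Twisting down by $m$ points lowers each HN slope to $\mu_{i} - m \geq b - m \geq 2g(B) - 1 > 2g(B) - 2$ provided $m \leq \lfloor b \rfloor - 2g(B) + 1$ (here one checks $b - m \geq 2g(B) - 1$ using $m \leq b - 2g(B) + 1$ and that $m$ is an integer), and the semistable quotients $\mathcal{Q}_{i}(-p_{1} - \cdots - p_{m})$ then all have $H^{1} = 0$ by the Serre duality argument above; so $H^{1}(B, \mathcal{E}(-p_{1}-\cdots - p_{m})) = 0$, as needed.

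The main obstacle is the bookkeeping in part (3): one must be careful that the point-passing statement is the one actually used downstream (passing through general points of $Z$, with the points on $B$ allowed to vary or fixed), and that the deformation space really is unobstructed \emph{with} the incidence conditions imposed --- this is exactly what the twisted $H^{1}$-vanishing buys, but matching the precise count $\lfloor b \rfloor - 2g(B) + 1$ requires tracking the integrality of the twist carefully. Parts (1) and (2) are routine once the semistable case is isolated; the only subtlety there is the standard fact that global generation propagates through extensions, which is immediate from the vanishing $H^{1}(B,\mathcal{F}_{i-1}(-p)) = 0$ that the HN-free hypothesis supplies at every stage.
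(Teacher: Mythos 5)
This lemma is not proved in the paper at all --- it is quoted verbatim from \cite[Lemma 3.6]{LRT22} --- so there is no internal proof to compare against; your argument is correct and is the standard one used there: Serre duality gives $H^{1}=0$ for semistable bundles of slope $>2g(B)-2$, the HN-free hypothesis keeps all Harder--Narasimhan slopes $\geq 2g(B)$ even after twisting down by $\lfloor b\rfloor-2g(B)+1$ points, and the resulting vanishing yields smoothness of $\Mor(B,Z)$ at $[s]$ together with surjectivity of the differential of the evaluation map, hence dominance onto a (dense, by irreducibility of $Z^{m}$) open subset of $Z^{m}$. The only phrasing to tighten is in (3): rather than invoking smoothness of $\Mor(B,Z;p_{i}\mapsto q_{i})$ at $[s]$ (which presupposes $s(p_{i})=q_{i}$), argue directly that the evaluation map $s'\mapsto (s'(p_{1}),\dots,s'(p_{m}))$ on $\Mor(B,Z)$ is smooth at $[s]$, which is exactly what your two vanishing statements provide.
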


\begin{proof}
In the notation of \cite[Lemma 3.6]{LRT22}, $\mathcal Z = Z \times B$ and $T_{\mathcal Z/B}|_C$ is $s^*T_Z$. With these translations, the statements follow from \cite[Lemma 3.6]{LRT22}.
\end{proof}

The previous lemma shows that HN-free maps go through many general points of $Z$. 
Conversely, maps through sufficiently many general points of the product $B \times Z$ must be HN-free.

\begin{proposition}[{\cite[Proposition 3.7]{LRT22}}] \label{prop:generalimplieshnfree}
Let $Z$ be a smooth projective variety and let $B$ be a smooth projective curve.  Let $M$ be a component of $\Mor(B,Z)$. Equivalently, we can think of $M$ as parametrizing a family of sections of the projection map $B \times Z \to B$.  Suppose that the sections parametrized by $M$ pass through $\geq 2g(B)+1$ general points of $B \times Z$.  Then the general curve parametrized by $M$ is HN-free.
\end{proposition}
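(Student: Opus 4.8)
The plan is to argue by contradiction. Supposing that the general $s \in M$ has $\mu^{min}(s^{*}T_{Z}) < 2g(B)$, I would extract from $s^{*}T_{Z}$ a nonzero quotient bundle all of whose Harder--Narasimhan slopes are $< 2g(B)$, and then show that such a bundle has too few global sections to permit the sections parametrized by $M$ to pass through $2g(B)+1$ general points of $B \times Z$.

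The first step is to turn the hypothesis into a surjectivity statement. Writing $m = 2g(B)+1$ and thinking of $M$ as a family of sections of $B \times Z \to B$, the assumption that these sections pass through $m$ general points of $B \times Z$ means that, for a general choice of distinct points $p_{1},\dots,p_{m} \in B$, the evaluation morphism $\ev \colon M \to Z^{m}$, $t \mapsto (t(p_{1}),\dots,t(p_{m}))$, is dominant; one sees this by projecting the dense locus of achievable configurations inside $(B\times Z)^{m}$ onto the factor $B^{m}$. Since we are in characteristic $0$, generic smoothness shows that the differential of $\ev$ is surjective at a general point, and a standard argument (a general fiber of a dominant morphism meets a given dense open subset of the source) lets one arrange this for a general $s \in M$ together with general $p_{1},\dots,p_{m}$. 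Concretely: for a general $s$ there are distinct general points $p_{1},\dots,p_{m} \in B$ such that the evaluation
\begin{equation*}
H^{0}(B,s^{*}T_{Z}) \longrightarrow \bigoplus_{i=1}^{m}(s^{*}T_{Z})_{p_{i}}
\end{equation*}
is surjective. In particular, projecting to the single factor $p_{1}$, the bundle $\mathcal{E} := s^{*}T_{Z}$ is generically globally generated.

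Now assume for contradiction that $\mu^{min}(\mathcal{E}) < 2g(B)$. Let $\mathcal{F} \subseteq \mathcal{E}$ be the largest step of the Harder--Narasimhan filtration with $\mu^{min}(\mathcal{F}) \geq 2g(B)$ (possibly $\mathcal{F} = 0$), and set $\mathcal{Q} = \mathcal{E}/\mathcal{F}$. Then $\mathcal{Q}$ is a vector bundle of rank $r \geq 1$, all of whose Harder--Narasimhan slopes are $< 2g(B)$, and it is generically globally generated as a quotient of $\mathcal{E}$; hence $\deg(\mathcal{Q}) < 2g(B)\,r$ and Lemma~\ref{lemm:ggh1bound}(1) yields $h^{0}(B,\mathcal{Q}) \leq \deg(\mathcal{Q}) + r < (2g(B)+1)\,r = mr$. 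On the other hand, restricting $0 \to \mathcal{F} \to \mathcal{E} \to \mathcal{Q} \to 0$ to the points $p_{i}$ gives surjections $\mathcal{E}_{p_{i}} \twoheadrightarrow \mathcal{Q}_{p_{i}}$; composing with the surjectivity from the previous step and noting that the resulting map $H^{0}(B,\mathcal{E}) \to \bigoplus_{i}\mathcal{Q}_{p_{i}}$ factors through $H^{0}(B,\mathcal{Q})$, we get $h^{0}(B,\mathcal{Q}) \geq mr$. This contradicts the previous inequality, so in fact $\mu^{min}(s^{*}T_{Z}) \geq 2g(B)$ and the general curve parametrized by $M$ is HN-free.

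The step I expect to be the main obstacle is the first one: carefully extracting, from ``passes through $m$ general points of $B \times Z$'', the surjectivity of the pointwise evaluation $H^{0}(B,s^{*}T_{Z}) \to \bigoplus_{i}(s^{*}T_{Z})_{p_{i}}$ for a single general $s$. It is crucial here that one prescribes points of $B \times Z$ rather than of $Z$: if the $p_{i}$ were allowed to vary, the differential of $\ev$ would only be surjective modulo the $m$ tangent directions of $s$ at the image points, one would obtain only $h^{0}(B,\mathcal{Q}) \geq m(r-1)$, and the final numerical comparison would no longer give a contradiction. Everything after that surjectivity is routine Harder--Narasimhan bookkeeping.
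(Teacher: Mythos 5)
Your proposal is correct, and it follows essentially the same route as the proof in \cite{LRT22} that the paper cites for this statement: convert ``passes through $2g(B)+1$ general points of $B\times Z$'' into surjectivity of the evaluation $H^{0}(B,s^{*}T_{Z})\to\bigoplus_{i}(s^{*}T_{Z})_{p_{i}}$ at general points of $B$ for a general $s$ (using that the curves are sections, so the points of $B$ can be fixed first), and then use the section bound for generically globally generated bundles to exclude a Harder--Narasimhan quotient of slope below $2g(B)$. Your closing remark correctly identifies the one genuinely delicate point, namely that the points must be prescribed in $B\times Z$ rather than in $Z$.
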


\begin{example}
In the setting of Proposition \ref{prop:generalimplieshnfree} we really need to work with the product $B \times Z$ and not the variety $Z$.  For example, suppose that $Z$ is a $\mathbb{P}^{1}$-bundle over a high genus curve $T$.  Then sections of $Z \to T$ can contain arbitrarily many general points of $Z$ but will never be HN-free.  This issue will come up again in Example \ref{exam:notadjointrigid}.
\end{example}

We will also need to know the following avoidance property of HN-free maps.

\begin{lemma}[{\cite[Lemma 3.8]{LRT22}}] \label{lemm:HNavoidscodim2}
Let $Z$ be a smooth projective variety and let $B$ be a smooth projective curve.  Suppose that $s: B \to Z$ is an HN-free map.  Then for any codimension $2$ closed subset $W \subset Z$ there is a deformation of $s$ which is HN-free and whose image avoids $W$. 
\end{lemma}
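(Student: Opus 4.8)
The plan is to show that within the irreducible component $M \subset \Mor(B,Z)$ containing $[s]$, the sublocus of maps whose image meets $W$ is a proper closed subset which does not contain the open locus of HN-free maps. Write $M^{\circ} \subseteq M$ for the locus parametrizing HN-free maps; since the condition $\mu^{min}(s_t^{*}T_Z) \geq 2g(B)$ is open in families (lower semicontinuity of the minimal slope in the Harder--Narasimhan polygon), $M^{\circ}$ is open, and it is nonempty since it contains $[s]$. Let $\ev \colon B \times M \to Z$ be the universal evaluation morphism, $\ev(b,t) = s_t(b)$. It suffices to prove that $\ev^{-1}(W) \cap (B \times M^{\circ})$ has dimension at most $\dim M - 1$: then its image under the projection $B \times M^{\circ} \to M^{\circ}$ is a proper closed subset, and any $t$ in the complement gives an HN-free deformation $s_t$ of $s$ with $s_t(B) \cap W = \emptyset$.

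The heart of the argument is a fiberwise dimension estimate for $\ev$ over $M^{\circ}$. Fix $b \in B$ and $z \in Z$ and consider $\{t \in M^{\circ} : s_t(b) = z\}$. Each irreducible component of this locus lies in some irreducible component $M'$ of $\Mor(B,Z; b \mapsto z)$ meeting $M^{\circ}$, and by the dimension bounds of Section~\ref{sect:spaceofmorphisms} we have $\dim M' \leq h^{0}(B, s_t^{*}T_Z(-b))$ for every $t \in M'$. Choosing $t$ to be HN-free, Lemma~\ref{lemma:hnfreecurves}(1) gives $H^{1}(B, s_t^{*}T_Z(-b)) = 0$, so Riemann--Roch yields
\[
h^{0}(B, s_t^{*}T_Z(-b)) = -K_Z \cdot s_* B + \dim Z\,(1 - g(B)) - \dim Z .
\]
Moreover $H^{1}(B, s_t^{*}T_Z) = 0$ by the same lemma, so $\dim M = h^{0}(B, s_t^{*}T_Z) = -K_Z \cdot s_* B + \dim Z\,(1-g(B))$, the anticanonical degree $-K_Z \cdot s_*B$ being constant on the irreducible $M$. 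Combining, every component of $\{t \in M^{\circ} : s_t(b) = z\}$ has dimension at most $\dim M - \dim Z$.

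To finish, I would fiber $\ev^{-1}(z) \cap (B \times M^{\circ})$ over $B$: the fiber over each $b\in B$ is $\{b\}\times\{t\in M^{\circ}:s_t(b)=z\}$, of dimension $\leq \dim M - \dim Z$, so $\dim\!\big(\ev^{-1}(z) \cap (B \times M^{\circ})\big) \leq \dim M - \dim Z + 1$. Fibering $\ev^{-1}(W)\cap(B\times M^{\circ})$ over $W$ and using $\dim W \leq \dim Z - 2$ then gives
\[
\dim\!\big(\ev^{-1}(W) \cap (B \times M^{\circ})\big) \leq (\dim Z - 2) + (\dim M - \dim Z + 1) = \dim M - 1 ,
\]
which is exactly the bound needed; projecting to $M^{\circ}$ and picking $t$ outside the image completes the proof. (If $W$ is reducible, apply this to each irreducible component separately.)

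The point that requires care is the worry that \emph{every} deformation of $s$ within $M$ might be confined to $W$ — this is not a priori absurd, since HN-free curves can lie in subvarieties of arbitrarily high codimension. The dimension count above is precisely what rules it out; equivalently, one sees a posteriori that $\ev$ is dominant, because Lemma~\ref{lemma:hnfreecurves}(3) forces deformations of an HN-free $s$ through a general point of $Z$ (here $\lfloor \mu^{min}(s^{*}T_Z)\rfloor - 2g(B) + 1 \geq 1$). The remaining subtleties are bookkeeping: the dimension bound from Section~\ref{sect:spaceofmorphisms} must be applied to the pointed space $\Mor(B,Z;b\mapsto z)$ rather than to $M$, and one must invoke openness of $M^{\circ}$ so that the chosen deformation is itself HN-free.
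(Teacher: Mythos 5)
Your proof is correct. The paper does not prove this lemma itself (it is quoted from \cite[Lemma 3.8]{LRT22}), but your argument is the standard incidence-variety dimension count behind that result (the HN-free analogue of \cite[II.3.7]{Kollar}): openness of the HN-free locus, the vanishing $H^{1}(B,s_t^{*}T_Z(-b))=0$ forcing $\dim M = \chi(s_t^{*}T_Z)$ and $\dim\{t : s_t(b)=z\} \leq \dim M - \dim Z$, and then the count over $B \times W$ giving $\dim M - 1$; properness of $B \times M^{\circ} \to M^{\circ}$ makes the bad locus closed, so the complement is a nonempty open set of HN-free deformations missing $W$.
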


\subsection{Fujita invariant}  

Recall from Definition~\ref{defi:a-invariant} that for a smooth projective variety $X$ defined over a field of characteristic $0$ and a big and nef $\mathbb{Q}$-Cartier divisor $L$ on $X$, we define the Fujita invariant by
\begin{equation*}
a(X,L) = \min \{ t\in \mathbb{R} \mid  K_X + tL \in \Eff^{1}(X) \}.
\end{equation*}

It follows from the seminal work \cite{BDPP13} that the Fujita invariant will be positive if and only if $X$ is geometrically uniruled.  We will use the Spectrum Conjecture for Fujita Invariants which was first established in slightly different settings by DiCerbo and by Han and Li using Birkar's solution of the BAB conjecture (\cite{birkar19}):

\begin{theorem}[{\cite[Theorem 1.2]{Dicerbo17}}, {\cite[Theorem 1.3]{HL20}}]
\label{theo:Dicerbo}
Fix a positive integer $n$ and fix $\epsilon > 0$.  Let $X$ vary over all smooth projective varieties of dimension $n$ defined over a field of characteristic $0$ and $L$ vary over all big and nef Cartier divisors on $X$, the set
\[
\{a(X, L) \, | \, a(X, L) \geq \epsilon\}
\]
is finite.
\end{theorem}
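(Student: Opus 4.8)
The plan is to deduce the statement from Birkar's solution of the Borisov--Alexeev--Borisov conjecture \cite{birkar19}, arguing by induction on the dimension $n$; this is the strategy of \cite{Dicerbo17} and \cite{HL20}, of which I sketch only the skeleton.

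Fix a pair $(X,L)$ with $a := a(X,L) \geq \epsilon$ and set $D = K_X + aL$, a pseudo-effective divisor lying on the boundary of $\Eff^{1}(X)$. First I would observe that $D$ is not big, since otherwise $K_X + (a - \delta)L$ would remain pseudo-effective for small $\delta > 0$, contradicting the minimality defining $a$. Next I would run the minimal model program for the adjoint pair (following the analysis of the Fujita invariant in \cite{HTT15}) to reach a $\mathbb{Q}$-factorial model $X'$ with birational contraction $\psi \colon X \dashrightarrow X'$ on which $K_{X'} + aL'$, where $L' = \psi_{*}L$, carries a fibration structure $\pi \colon X' \to Z$ with $K_{X'} + aL'$ numerically trivial on the fibers of $\pi$; since $D$ is not big we get $\dim Z < n$. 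If $\dim Z > 0$, then a canonical bundle formula applied to a general fiber $F$ of $\pi$ -- after passing to a resolution, which does not change the Fujita invariant by \cite[Proposition 2.7]{HTT15} -- gives $a(F, L'|_{F}) = a$, and the inductive hypothesis confines $a$ to a finite set. So it remains to treat $\dim Z = 0$, i.e. $K_{X'} + aL' \equiv 0$.

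In this case $-K_{X'} \equiv aL'$ is big and nef, so $X'$ is a klt weak Fano variety; choosing a general auxiliary boundary $\Delta \geq 0$ from a suitable multiple of $|{-K_{X'}}|$ with $K_{X'} + \Delta \equiv 0$ makes $(X',\Delta)$ a weak log Fano pair. The key point is that one can arrange $(X',\Delta)$ to be $\epsilon'$-log canonical for some $\epsilon' = \epsilon'(n,\epsilon) > 0$, using that $X'$ is klt and that the proportionality constant $a$ is bounded below by $\epsilon$. Birkar's theorem \cite{birkar19} then forces all such $X'$ to lie in a bounded family. Since in a bounded family there are only finitely many numerical classes $L'$ with $-K_{X'} \equiv aL'$ and $a \geq \epsilon$, only finitely many values of $a$ occur; together with the finitely many values coming from the lower-dimensional reduction, this finishes the induction.

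I expect the main obstacle to be the $\epsilon'$-log canonicity claim: producing a \emph{uniform} lower bound on the log discrepancies of $(X',\Delta)$ is precisely the hypothesis that makes BAB applicable, and it demands careful control of the singularities created both by running the MMP from $X$ to $X'$ and by the choice of boundary $\Delta$ -- this is the technical heart of \cite{Dicerbo17} and \cite{HL20}. A secondary difficulty is the case where $K_X + aL$ has negative Kodaira dimension, where there is no honest Iitaka fibration and one must instead work with the numerical dimension, together with the available abundance-type inputs, to still construct $\pi$.
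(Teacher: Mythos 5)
First, a point of calibration: the paper does not prove Theorem~\ref{theo:Dicerbo} at all --- it is imported verbatim from Di Cerbo and Han--Li, so there is no internal argument to compare yours against. Your skeleton does match the strategy of those references (run the adjoint MMP for $K_X+a(X,L)L$, reduce via the fibration structure and a canonical bundle formula to lower dimension, and handle the numerically trivial base case by boundedness of Fanos), so the shape is right. But as written the proposal is an outline with the decisive step asserted rather than proved, and that step is not a routine verification.

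Concretely, the claim that one can arrange $(X',\Delta)$ to be $\epsilon'$-log canonical for a uniform $\epsilon'=\epsilon'(n,\epsilon)$ is the entire content of the theorem in the base case, and it does not follow from what you have: running an MMP from a smooth $X$ only guarantees that $X'$ is klt, and klt is not a uniform condition --- the discrepancies of $X'$ can approach $-1$ as $(X,L)$ varies, and a general member $\Delta$ of a multiple of $|{-K_{X'}}|$ can pass through the bad singular points, so no uniform $\epsilon'$ comes for free. (The actual proofs get around this by quite different means, e.g.\ ACC for log canonical thresholds and the global ACC of Hacon--McKernan--Xu, or by exploiting that $-K_{X'}\equiv aL'$ with $L'$ Cartier and $a\geq\epsilon$ to control the Cartier index/volume before invoking boundedness.) A second gap sits in the inductive step: after the MMP, $L'=\psi_*L$ need not remain nef, and on a $\mathbb{Q}$-factorial model it need not remain Cartier, so you cannot directly feed $(F,L'|_F)$ back into the inductive hypothesis, which is stated only for big and nef \emph{Cartier} divisors; one has to either work on a common resolution with the Iitaka fibration of $K_X+aL$ or prove a version of the statement stable under these operations. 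Since the paper itself black-boxes the theorem, the appropriate fix is simply to cite \cite{Dicerbo17} and \cite{HL20} rather than to reprove it; if you do want a self-contained argument, the uniform log-canonicity bound is where the work has to go.
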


The following definition is frequently useful when working with the Fujita invariant.

\begin{definition}
Let $X$ be a smooth projective variety and let $L$ be a big and nef $\mathbb{Q}$-divisor on $X$.  A pair $(X,L)$ is adjoint rigid if $K_{X} + a(X,L)L$ has Iitaka dimension $0$.  When $X$ is singular and $L$ is a big and nef $\mathbb{Q}$-Cartier divisor on $X$, we say $(X,L)$ is adjoint rigid if $(X',\phi^{*}L)$ is adjoint rigid for a smooth resolution $\phi: X' \to X$. This definition does not depend on the choice of resolution.
\end{definition}

The Fujita invariant can be used to bound adjoint rigid subvarieties.  Indeed, let $X$ be a smooth projective variety defined over an algebraically closed field and $L$ be a big and nef $\mathbb Q$-divisor on $X$. Then \cite[Theorem 4.17]{LST18} shows that the subvarieties $Y \subset X$ such that $L|_Y$ is big, $a(Y, L) \geq a(X, L)$, and $(Y, L|_Y)$ is adjoint rigid, are parametrized by a bounded family.

\subsection{Slope stability for smooth projective varieties}

The notion of slope stability with respect to movable curve classes was developed by \cite{CP11} and subsequently by \cite{GKP14} and \cite{GKP16}.

\begin{definition}
Let $X$ be a smooth projective variety and let $\alpha \in \Nef_{1}(X)$ be a non-zero nef $1$-cycle on $X$.  For any non-zero torsion-free sheaf $\mathcal{E}$ on $X$, we consider the following invariant:
\begin{equation*}
\mu_{\alpha}(\mathcal{E}) = \frac{c_{1}(\mathcal{E}) \cdot \alpha}{\rk(\mathcal{E})},
\end{equation*}
which is called as the slope of $\mathcal E$ with respect to a class $\alpha$.
A torsion-free sheaf $\mathcal{E}$ is $\alpha$-semistable if we have $\mu_{\alpha}(\mathcal{F}) \leq \mu_{\alpha}(\mathcal{E})$ for every non-zero torsion-free subsheaf $\mathcal{F} \subset \mathcal{E}$.
\end{definition}

\begin{definition}
Let $X$ be a smooth projective variety and let $\alpha \in \Nef_{1}({X})$ be a nef $1$-cycle on $X$.  Let $\mathcal{E}$ be a torsion-free sheaf of rank $r > 0$.  Suppose that
\begin{equation*}
0 = \mathcal{F}_{0} \subset \mathcal{F}_{1} \subset \mathcal{F}_{2} \subset \ldots \subset \mathcal{F}_{k} = \mathcal{E}
\end{equation*}
is the $\alpha$-Harder-Narasimhan filtration of $\mathcal{E}$.
The slope panel $\SP_{\alpha}(\mathcal{E})$ is the $r$-tuple of rational numbers obtained by combining for every index $i$ the list of $\rk(\mathcal{F}_{i}/\mathcal{F}_{i-1})$ copies of $\mu_{\alpha}(\mathcal{F}_{i}/\mathcal{F}_{i-1})$ (arranged in non-increasing order).  We define $\mu^{max}_{\alpha}(\mathcal{E})$ as the maximal slope of any torsion-free subsheaf, i.e.,~$\mu^{max}_{\alpha}(\mathcal{E}) = \mu_{\alpha}(\mathcal{F}_{1})$.  We define $\mu^{min}_{\alpha}(\mathcal{E})$ as the minimal slope of any torsion-free quotient, i.e.,~$\mu^{min}_{\alpha}(\mathcal{E}) = \mu_{\alpha}(\mathcal{E}/\mathcal{F}_{k-1})$.

When we discuss slope panels of a curve, we will always assume that $\alpha$ is a degree $1$ line bundle and thus we will simply write $\mu^{max}(\mathcal{E})$, $\mu^{min}(\mathcal{E})$, $\SP(\mathcal{E})$.
\end{definition}

 Suppose that $X$ is a smooth projective variety.  Recall that a foliation $\mathcal{F}$ on $X$ is a coherent subsheaf of the tangent bundle $\mathcal{F} \subset T_{X}$ such that that is closed under the Lie bracket of $T_{X}$, i.e.~$[\mathcal{F}, \mathcal{F}] \subset \mathcal{F}$.

\begin{theorem}[{\cite[Proposition 1.3.32]{Pang}}] \label{theo:HNisfoliation}
Suppose that $X$ is a smooth projective variety and let $\alpha \in \Nef_{1}(X)$ be a non-zero nef curve class.  Let 
\begin{equation*}
0 = \mathcal{F}_{0} \subset \mathcal{F}_{1} \subset \ldots \subset \mathcal{F}_{k} = T_{X}.
\end{equation*}
be the $\alpha$-Harder-Narasimhan filtration of $T_{X}$.
Then every term $\mathcal{F}_{i}$ with $\mu_{\alpha}^{min}(\mathcal{F}_{i}) > 0$ defines a foliation on $X$.
\end{theorem}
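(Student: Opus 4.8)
The plan is to prove that each such term $\mathcal{F}_i$ is involutive by the classical ``O'Neill tensor'' argument --- the same one used for the analogous statements with ample polarizations and their extensions to curve classes interior to the movable cone --- the point being that the obstruction to integrability is annihilated by a slope inequality.

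First I would fix an index $i$ with $\mu^{min}_{\alpha}(\mathcal{F}_i) > 0$. If $i = k$ or $\rk(\mathcal{F}_i) \leq 1$ there is nothing to prove, so assume $i < k$ and $\rk(\mathcal{F}_i) \geq 2$. The terms of a Harder--Narasimhan filtration are saturated, so $\mathcal{F}_i$ is saturated in $T_X$ and $T_X/\mathcal{F}_i$ is torsion-free; it therefore suffices to show $[\mathcal{F}_i, \mathcal{F}_i] \subseteq \mathcal{F}_i$. Restricting the Lie bracket of $T_X$ to $\mathcal{F}_i$ and composing with $T_X \to T_X/\mathcal{F}_i$ produces a pairing that is $\mathcal{O}_X$-bilinear --- because $[v, fw] - f[v,w] = v(f)\,w$ already lies in $\mathcal{F}_i$ --- and alternating; it thus factors through an $\mathcal{O}_X$-linear ``O'Neill tensor'' $N \colon \Lambda^2 \mathcal{F}_i \to T_X/\mathcal{F}_i$, and $\mathcal{F}_i$ is a foliation precisely when $N = 0$. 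Since $\Lambda^2 \mathcal{F}_i$ is torsion-free, this vanishing may be checked over any dense open subset of $X$.

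Next I would compare slopes. Since $\mathcal{F}_i$ is a Harder--Narasimhan term of $T_X$, the quotient satisfies $\mu^{max}_{\alpha}(T_X/\mathcal{F}_i) = \mu_{\alpha}(\mathcal{F}_{i+1}/\mathcal{F}_i) < \mu_{\alpha}(\mathcal{F}_i/\mathcal{F}_{i-1}) = \mu^{min}_{\alpha}(\mathcal{F}_i)$. In characteristic $0$, $\Lambda^2 \mathcal{F}_i$ carries a filtration whose graded pieces --- exterior squares or tensor products of the graded pieces of the Harder--Narasimhan filtration of $\mathcal{F}_i$ --- are $\alpha$-semistable of slope at least $2\,\mu^{min}_{\alpha}(\mathcal{F}_i)$; hence, using $\mu^{min}_{\alpha}(\mathcal{F}_i) > 0$,
\[
\mu^{min}_{\alpha}(\Lambda^2 \mathcal{F}_i) \;\geq\; 2\,\mu^{min}_{\alpha}(\mathcal{F}_i) \;>\; \mu^{min}_{\alpha}(\mathcal{F}_i) \;>\; \mu^{max}_{\alpha}(T_X/\mathcal{F}_i).
\]
A nonzero homomorphism $\mathcal{A} \to \mathcal{B}$ of torsion-free sheaves forces $\mu^{min}_{\alpha}(\mathcal{A}) \leq \mu^{max}_{\alpha}(\mathcal{B})$, its image being at once a quotient of $\mathcal{A}$ and a subsheaf of $\mathcal{B}$; applied to $N$ this gives $N = 0$, so $\mathcal{F}_i$ is an involutive saturated subsheaf of $T_X$, i.e.\ a foliation.

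The main obstacle is not this formal chase but securing its inputs for a curve class $\alpha$ that is merely nef: the existence and uniqueness of the $\alpha$-Harder--Narasimhan filtration, the saturatedness of its terms, and above all the preservation of $\alpha$-semistability under $\Lambda^2$ with the accompanying additivity of slopes. For $\alpha$ ample, or interior to the movable cone, all of this is classical; but a nef class can lie on the boundary of the movable cone, where the tensorial behaviour of semistability may genuinely fail. The natural remedy --- and what I expect to be the real content --- is to approximate $\alpha$ within $\Nef_{1}(X)$ by classes (complete-intersection classes, say, or interior movable classes) for which the formalism is robust, run the argument above for those classes, and then verify that the Harder--Narasimhan filtration of $T_X$ and the vanishing of $N$ are stable under the perturbation. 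This is precisely the work carried out in the cited result of Pang.
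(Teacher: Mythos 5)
The paper does not prove this statement at all: it is quoted verbatim as \cite[Proposition 1.3.32]{Pang} and used as a black box, so there is no internal proof to compare yours against. Judged on its own merits, your argument is the standard (and correct) route to this kind of result: the integrability obstruction is an $\mathcal{O}_X$-linear map $N\colon \Lambda^2\mathcal{F}_i \to T_X/\mathcal{F}_i$, and it must vanish because $\mu^{\min}_\alpha(\Lambda^2\mathcal{F}_i) \geq 2\mu^{\min}_\alpha(\mathcal{F}_i) > \mu^{\min}_\alpha(\mathcal{F}_i) > \mu^{\max}_\alpha(T_X/\mathcal{F}_i)$, the last inequality being the defining property of a Harder--Narasimhan term and the middle one using the hypothesis $\mu^{\min}_\alpha(\mathcal{F}_i)>0$. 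Your reductions (saturation of HN terms, the rank $\leq 1$ and $i=k$ cases, checking $N=0$ generically) are all fine.

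You are also right that the entire weight of the proof rests on the input you defer: the existence of the $\alpha$-HN filtration and the additivity/preservation of $\alpha$-semistability under $\Lambda^2$ when $\alpha$ is merely nef. Two remarks here. First, by \cite{BDPP13} the nef cone of curves (the dual of $\Eff^1(X)$, which is the paper's convention) coincides with the closed movable cone, so the relevant theory is exactly the one developed in \cite{CP11}, \cite{GKP14}, \cite{GKP16}, which the paper recalls in its ``Slope stability'' subsection; in particular the tensor-product theorem for semistability with respect to movable classes, including boundary classes, is a theorem there rather than something you need to re-derive by hand. Second, your proposed remedy of perturbing $\alpha$ into the interior and passing to the limit is more delicate than you let on, because the HN filtration is not continuous in $\alpha$ (the filtration can jump on walls, and on the boundary the maximal destabilizing subsheaf need not be detected by nearby interior classes); this is precisely the difficulty that \cite{GKP16} is designed to circumvent. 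So your write-up is a correct proof modulo a citation to that tensor-semistability result, but the final paragraph should replace the perturbation sketch with that citation rather than suggest the limit argument goes through unexamined.
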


Given a rational map $f: X \dashrightarrow Y$ from a smooth projective variety $X$ to a normal projective variety $Y$, the fibers of $f$ define leaves of a foliation where the map is smooth.  The foliation induced by $f$ is the corresponding saturated subsheaf of $T_{X}$.   
\subsection{Families of non-birational maps}

Let $Z$ be a smooth projective variety and let $B$ be a smooth projective curve.  Suppose that $M$ is an irreducible component of $\Mor(B,Z)$ such that the general map parametrized by $M$ is not birational onto its image.  In this situation, we will show that there is an intermediate curve $B'$ such that a general map $s: B \to Z$ parametrized by $M$ factors through a morphism $s': B' \to Z$ that is birational onto its image.

\begin{lemma} \label{lemm:constantimage}
Suppose $B$ is a smooth projective curve of genus $g$.  Suppose we have a family of degree $d$ maps from $B$ to smooth projective curves $\{C_{t}\}$, i.e.~a commuting diagram
\begin{equation*}
\xymatrix{
B \times T \ar[rr] \ar[dr] & & \mathcal{C} \ar[dl] \\
& T &
}
\end{equation*}
where $\mathcal{C} \to T$ is a smooth proper morphism of relative dimension $1$ and $T$ is connected.  Then the set $\{B \to C_{t}\}$ of maps from $B$ are all isomorphic to each other.  
\end{lemma}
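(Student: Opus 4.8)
The plan is to show that the family $\mathcal{C} \to T$ is isotrivial by exhibiting each fiber $C_t$ as a quotient of a fixed curve $B$ in a way that rigidifies its isomorphism class. First I would observe that the hypothesis gives, for each $t \in T$, a degree $d$ morphism $\psi_t : B \to C_t$, varying in an algebraic family over $T$. The key point is that the genus of $C_t$ is locally constant in $t$ (since $\mathcal{C} \to T$ is smooth proper of relative dimension $1$), and $T$ is connected, so all $C_t$ have the same genus $g'$; moreover by Riemann–Hurwitz the ramification divisor $R_t \subset B$ of $\psi_t$ has fixed degree $2g - 2 - d(2g'-2)$. I would then consider the relative setting: the map $B \times T \to \mathcal{C}$ induces, after pushing forward, a sub-line-bundle type structure, but more efficiently I would argue via the branch/ramification data. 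Each $\psi_t$ corresponds to a point of a Hilbert-scheme-type parameter space of degree $d$ maps out of the fixed curve $B$; the target curve $C_t$ is recovered as $\mathrm{Proj}$ of the image of $\psi_{t*}\mathcal{O}_B$, equivalently as the normalization of the image. So $t \mapsto C_t$ is (up to the connectedness of $T$) a map from $T$ to the set of isomorphism classes of such quotient curves.

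The cleanest route is to use rigidity of finite maps from a fixed source. Here is the structure I would carry out: (i) By generic smoothness and shrinking $T$ if necessary, reduce to the case where $T$ is a smooth curve (it suffices to connect two arbitrary points $t_0, t_1$ by a chain of curves in $T$, since isomorphism type is then forced to agree along each link and $T$ is connected). (ii) For $T$ a smooth affine curve, complete to a smooth projective curve $\overline T$ and extend $\mathcal{C} \to T$ to a family over a (possibly larger) base after semistable reduction; the generic fiber $C_\eta$ is a curve over $K(T)$ receiving a degree $d$ map from $B_{K(T)}$. (iii) Now invoke the key rigidity fact: a smooth proper family of curves $\mathcal{C} \to T$ together with a $T$-morphism $B \times T \to \mathcal{C}$ of fiberwise degree $d$, where the source family is \emph{constant}, must be isotrivial — because the map $T \to M_{g'}$ classifying the family factors through the image of the (quasi-finite, since $\mathrm{Aut}(B)$ is finite for $g \ge 2$, and handled directly for $g \le 1$) map parametrizing degree $d$ covers of a fixed $B$, and a degree $d$ cover of the \emph{fixed} curve $B$ has only finitely many isomorphism classes of target once the genus is fixed — or more directly, the data of $\psi_t$ is a point in $\mathrm{Mor}_d(B, -)$ which over a connected base with fixed target genus deforms the target only within a single isomorphism class. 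I would phrase (iii) concretely: the sheaf $\psi_{t*}\mathcal{O}_B$ on $C_t$ pulls the structure back, and $C_t = \mathrm{Spec}_{C_t}(\mathcal{O}_{C_t}) $ sits inside $\mathrm{Spec}_B$ of the corresponding subalgebra of $\psi_{t*}\mathcal{O}_B \otimes_{\mathcal{O}_{C_t}} (\text{something})$ — but I think the honest argument is via the universal property below.

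The argument I actually expect to run: consider the relative $\mathrm{Hilb}$ or $\mathrm{Mor}$ functor and instead use that $\mathcal{C} \to T$ with the $T$-map from $B\times T$ gives a morphism $T \to \mathrm{Hilb}(B \times \mathbb{P}^?)$-type space — cleaner: embed each $C_t$ by a fixed power of a relatively ample bundle and note the graph $\Gamma_t \subset B \times C_t \subset B \times \mathbb{P}^N$ varies in a family; the target $C_t$ is the image of $\Gamma_t$ under the second projection, so $t \mapsto C_t$ is an algebraic family of subvarieties of $\mathbb{P}^N$, i.e. a map $T \to \mathrm{Hilb}(\mathbb{P}^N)$. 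Separately, $t \mapsto [\psi_t] \in \mathrm{Mor}(B, \mathbb{P}^N)$ (or the relevant Hilbert scheme of graphs). The crucial finiteness is: \emph{the set of effective divisors (branch loci) of degree $2g-2-d(2g'-2)$ on the fixed curve $B$ is bounded}, which it is — it is $\mathrm{Sym}^{2g-2-d(2g'-2)}(B)$, a projective variety — but that alone does not give finiteness of isomorphism classes. The real input is that two degree $d$ covers $B \to C$ and $B \to C'$ with the \emph{same} branch behavior need not give isomorphic $C$; so I must instead use a deformation-theoretic/monodromy argument: the family $\psi_t: B \to C_t$ over connected $T$ has constant monodromy group and hence, by the Riemann existence theorem read in families, the cover is "the same" up to isomorphism of targets, forcing $C_t \cong C_{t'}$. \textbf{The main obstacle} is making this isotriviality statement rigorous without hand-waving: precisely, showing that a connected algebraic family of degree $d$ maps from a \emph{fixed} curve $B$ to varying curves of \emph{fixed} genus cannot move the target's moduli point — I would prove this by a valuative/specialization argument (take $T$ a DVR-spectrum, use that $B$ extends trivially, show the special fiber of $\mathcal{C}$ is forced to be isomorphic to the generic fiber by uniqueness of the Stein factorization / normalization of the image of $B \times \mathrm{Spec}(R) \to \mathcal{C}$), together with properness of moduli of stable curves to rule out degeneration; for $g(B) = 0, 1$ one checks the few cases by hand since then $C_t$ has genus $0$ or $0,1$ respectively and such low-genus families with a fixed-source cover are immediately isotrivial.
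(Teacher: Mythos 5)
Your write-up correctly isolates the crux---that a connected family of degree $d$ maps from the \emph{fixed} curve $B$ to smooth curves of fixed genus should not be able to move the isomorphism class of the target---but none of the mechanisms you offer actually establishes it. The valuative argument is circular: for $T=\Spec R$ a DVR, the map $B\times \Spec R \to \mathcal{C}$ is finite and surjective, so its Stein factorization and the normalization of its image are just $\mathcal{C}$ itself; ``uniqueness'' of these constructions gives no comparison between the special and generic fibers of $\mathcal{C}$, which is precisely what must be proved (a general smooth proper family over a DVR certainly can have non-isomorphic fibers; the fixed source $B$ has to enter in an essential way, and in your sketch it never does). The monodromy/Riemann-existence remark points the wrong way: Riemann existence rigidifies covers \emph{of} a fixed target with prescribed branch data, whereas here $B$ is the source and the target varies, so constancy of the monodromy of $B\to C_t$ does not pin down $C_t$. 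The one ingredient that would make your approach work---finiteness of the set of pairs $(C,\psi)$ with $\psi: B\to C$ non-constant of degree $\leq d$, i.e.\ the de Franchis--Severi theorem---appears only as an unproved parenthetical, and in a garbled form (you describe $C_t$ as a ``degree $d$ cover of $B$,'' reversing the direction, and for genus-one targets it is the degree bound, not the fixed genus, that yields finiteness). With that theorem in hand the proof would be short: the classifying morphism from connected $T$ to the coarse moduli space of curves has image contained in a finite set of points, hence is constant, so all $C_t$ are isomorphic. As written, the proposal identifies the obstacle and then does not overcome it.

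For comparison, the paper takes a different route that avoids de Franchis altogether: it passes to relative Jacobians, observes that a surjection from the fixed abelian variety $\Jac(B)$ is determined by the numerical class of the pullback of an ample divisor on the target (a class that is constant in a flat family), uses the Stein factorization $\Jac(B)\times T \to \mathcal{A} \to \mathcal{J}$ to see that the kernels, and hence the quotient abelian varieties $\mathcal{J}_t$, do not vary with $t$, and then concludes that $\mathcal{C}\to T$ is isotrivial by the Torelli theorem together with the discreteness of $H^{2}(\mathcal{J}_t,\mathbb{Z})$, which forces the class of the principal polarization to be constant. To repair your proposal, either cite de Franchis--Severi explicitly and add the connectedness argument above, or switch to this Jacobian/Torelli rigidity argument.
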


\begin{proof}
By taking relative Jacobians, we obtain a family of maps of abelian varieties $\Jac(B) \times T \to \mathcal{J}$.  We first claim that the fibers of $\mathcal{J} \to T$ are all isomorphic to each other.  Consider the Stein factorization $\Jac(B)\times T \to \mathcal A \to \mathcal{J}$.  Then $\mathcal A_{t}$ is an abelian variety and $\mathcal A_{t} \to \mathcal{J}_{t}$ is finite of a fixed degree.

First note that the morphism $\Jac (B) \times T \to \mathcal A$ over $T$ is isotrivial.  Indeed, a contraction morphism from $\Jac(B)$ is determined by the numerical class of the pullback of an ample divisor on $\mathcal A_t$.  Let $L$ be a relatively ample divisor on $\mathcal A$.  
Then the numerical class of the pullback of $L|_{\mathcal A_t}$ does not depend on $t$ because the numerical class is invariant under flat deformations, proving the claim.

Next note that $\mathcal{J} \to T$ is isotrivial.  It suffices to prove that the kernel of $\Jac(B) \to \mathcal J_t$ does not depend on $t$. Note that the connected component of the kernel of $\Jac(B) \to \mathcal J_t$ is the kernel of $\Jac(B)\to \mathcal A_t$ which does not depend on $t$. Thus we need to show that the order of the kernel of $\mathcal A_t \to \mathcal J_t$ does not depend on $t$. However, this follows from the fact that the degree of $\mathcal A_{t} \to \mathcal{J}_{t}$ is constant.
The upshot is that the kernel of $\Jac(B) \to \mathcal J_t$ does not depend on $t$ and this implies that 
$\mathcal J \to T$ is isomorphic to $J \times T \to T$ as relative abelian varieties where $J$ is an abelian variety.

To prove that the family $\mathcal C \to T$ is isotrivial, it suffices to consider the relative principal polarization of $\mathcal J \to T$ by the Torelli theorem.  However, since $H^{2}(\mathcal{J}_{t},\mathbb{Z})$ is discrete and the monodromy is trivial, any deformation will fix the numerical class of the polarization, proving that $C_t$ are all isomorphic to each other. Moreover since $\Jac(B) \to \Jac(C_t)$ is identified with $\Jac(B) \to J$, the maps $B \to C_{t}$ are all isomorphic to each other. Thus our assertion follows.
\end{proof}

\begin{corollary} \label{coro:factoringresult}
Let $Z$ be a smooth projective variety and let $B$ be a smooth projective curve.  Suppose that $W$ is a locally closed subvariety of $\Mor(B,Z)$ such that the morphisms parametrized by $W$ are not birational onto their image.  Then there is a finite morphism $g: B \to B'$ to a smooth projective curve $B'$ and a locally closed subvariety $Y \subset \Mor(B',Z)$ such that the general morphism parametrized by $W$ is the composition of $g$ with a morphism parametrized by $Y$.
\end{corollary}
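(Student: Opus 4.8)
The plan is to deduce Corollary~\ref{coro:factoringresult} from Lemma~\ref{lemm:constantimage} by a Stein-factorization-and-spreading-out argument. First I would shrink $W$ so that it is irreducible and so that all the relevant constructions work in families: over a dense open subset of $W$ the morphisms $s: B \to Z$ parametrized by $W$ have constant degree onto their image (degree is lower semicontinuous, so it is constant on an open set) and the images all have the same normalization genus (again by semicontinuity of $h^0$ of the structure sheaf of the normalization, or by the fact that arithmetic genus of the image is constant in a flat family after passing to a further open set). Form the universal morphism $\mathcal{S}: B \times W \to Z \times W$ over $W$, take its relative Stein factorization $B \times W \to \mathcal{C} \to Z \times W$, so that $\mathcal{C} \to W$ is a family of smooth projective curves $C_t$ (after shrinking $W$ so that this family is smooth and proper) and $B \times W \to \mathcal{C}$ is a family of degree $d$ covers $g_t: B \to C_t$.

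Next I would invoke Lemma~\ref{lemm:constantimage} applied to the family $\mathcal{C} \to W$: it tells us that all the curves $C_t$ are mutually isomorphic, so $\mathcal{C} \to W$ is isotrivial. After an étale base change on $W$ (which does not change the generality of the statement, since we only need the conclusion for a \emph{general} member of $W$) we may trivialize the family and write $\mathcal{C} \cong B' \times W$ for a fixed smooth projective curve $B'$. The family of covers $B \times W \to \mathcal{C} \cong B' \times W$ then becomes, fiberwise, a family of degree $d$ morphisms $g_t : B \to B'$. Now $\Mor_d(B, B')$ is itself a scheme of finite type, and since $B'$ has genus $\geq 2$ in the interesting case it is in fact a finite set (a curve of genus $\geq 2$ has only finitely many degree $d$ covers from a fixed $B$ up to the finite automorphism groups); in the low-genus cases one argues directly. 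The upshot is that, after one more shrinking of $W$, the cover $g_t = g: B \to B'$ is \emph{independent of} $t$. Then each $s_t: B \to Z$ factors as $g$ followed by some $s'_t: B' \to Z$, and $s'_t$ is birational onto its image by construction of the Stein factorization. The maps $s'_t$ sweep out a locally closed subvariety $Y \subset \Mor(B', Z)$ (the image of $W$ under the induced map), which is the desired conclusion.

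The main obstacle I expect is controlling the cover $g_t: B \to B'$ so that it can be taken independent of $t$. Lemma~\ref{lemm:constantimage} only gives that the \emph{targets} $C_t$ are abstractly isomorphic; it does not immediately say the maps $B \to C_t$ fit together compatibly, nor that after trivializing the target family the maps become constant. Handling this requires: (i) an étale base change to rigidify the family $\mathcal{C} \to W$, and (ii) the observation that for fixed $B$ and fixed $B'$ there are only finitely many morphisms of a given degree $B \to B'$ modulo automorphisms of $B'$ — this is where the genus of $B'$ enters, and the genus-$0$ and genus-$1$ cases of $B'$ must be treated with slightly different (but standard) arguments, or else absorbed by first passing to a component of $\Mor(B',Z)$. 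A secondary, more routine technical point is to make all the semicontinuity-based shrinkings of $W$ simultaneously, and to check that ``the general morphism parametrized by $W$'' survives the finitely many open restrictions and the étale base change; since the statement only asserts something about a general member, this causes no real trouble.
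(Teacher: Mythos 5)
Your approach is the same as the paper's: the entire official proof is one sentence --- apply Lemma \ref{lemm:constantimage} to the family obtained by taking the normalizations of the images of the general maps parametrized by $W$ --- so the spreading-out, relative Stein factorization, and trivialization details you supply are an elaboration of exactly that step, and most of them are correct. The one place where your argument has a real gap is the final rigidification of the covers $g_t : B \to B'$. De Franchis does handle $g(B') \geq 2$, and for $g(B')=1$ one can absorb the variation of $g_t$ into translations of $B'$ (post-compose $g_t$ with $\tau_t^{-1}$, pre-compose $s'_t$ with $\tau_t$). But for $g(B')=0$ the claim you defer to a ``standard direct argument'' is false: for $d$ large the space of degree-$d$ maps $B \to \mathbb{P}^1$ has dimension $2d+1-g(B)$, which exceeds $\dim \mathrm{PGL}_2 = 3$, so the $g_t$ genuinely vary even modulo automorphisms of the target, and there is no single $g$ through which the general $s_t$ factors with $s'_t$ birational onto its image (take $Z=\mathbb{P}^1$ and $W$ an irreducible component of $\Mor_d(B,\mathbb{P}^1)$ to see this). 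In that case the literal statement of the corollary is only satisfied by the trivial factorization $g = \mathrm{id}_B$; the paper sidesteps the issue because its later applications either assume $g(B') \geq 2$ (Theorem \ref{theo:absfoliationresult}) or treat rational and elliptic intermediate curves separately (Theorem \ref{theo:factorthroughrational}), and in those arguments only the isomorphism class of $B'$ and the degree of $g_t$ --- not $g_t$ itself --- are ever used. So: same route as the paper, correct up to this one step, which you should either restrict to $g(B')\geq 1$ or replace by the weaker (and actually used) conclusion that the target curve and the degree of the cover are constant in the family.
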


\begin{proof}
Apply Lemma \ref{lemm:constantimage} to the family of maps of smooth curves obtained by taking the normalizations of the images of the general maps parametrized by $W$. 
\end{proof}

\section{Grauert-Mulich} \label{sect:gm}

In the remaining of this paper, we work over $\mathbb C$ and we let $B$ denote a complex smooth projective curve.
Suppose that $Z$ is a smooth projective variety and $W \subset \Mor(B,Z)$ is a variety parametrizing a family of maps $s: B \to Z$ with universal family $\mathcal{U}_{W} \to W$.  In this section we will usually assume that the evaluation map $ev: \mathcal{U}_{W} \to Z$ is dominant and that a general fiber over $W$ is contained in the flat locus of $ev$.

Let $\mathcal{E}$ be a torsion-free sheaf on $Z$.  Under the assumptions above, the Grauert-Mulich theorem of \cite{LRT22} shows that for a general curve $s: B \to Z$ parametrized by $W$ the Harder-Narasimhan filtration of $s^{*}\mathcal{E}$ is ``approximately'' the pullback of the Harder-Narasimhan filtration of $\mathcal{E}$ with respect to $s_{*}B$.

\begin{definition}
Let $Y$ be a variety and $\mathcal{E}$ be a globally generated vector bundle on $Y$.  The syzygy bundle (or Lazarsfeld bundle) $M_{\mathcal{E}}$ is the kernel of the evaluation map $\OO_Y\otimes H^0(Y,\mathcal{E})\to \mathcal{E}$.
\end{definition}

\begin{theorem}[\cite{LRT22} Corollary 4.6] \label{theo:generalgm}

Let $Z$ be a smooth projective variety and let $\mathcal{E}$ be a torsion free sheaf on $Z$ of rank $r$.  Let $W$ be a variety equipped with a generically finite morphism $W \to \overline{\mathcal M}_{g,0}(Z)$ and let $p: \mathcal{U}_{W} \to W$ denote the universal family over $W$ with evaluation map $ev_{W}: \mathcal{U}_{W} \to Z$.  Assume that a general map parametrized by $W$ has smooth irreducible domain, that $ev_{W}$ is dominant, that the general fiber of the composition of the normalization map for $\mathcal{U}_{W}$ with $ev_{W}$ is connected, and that a general fiber of $p$ is contained in the locus where $ev_{W}$ is flat.

Let $C$ denote a general fiber of $\cU_{W} \to W$ equipped with the induced morphism $s: C \to Z$.  Let $t$ be the length of the torsion part of $N_{s}$, let $\mathcal{G}$ be the subsheaf of $(N_{s})_{tf}$ generated by global sections, and let $V$ be the tangent space to $W$ at $s$. Let $q$ be the dimension of the cokernel of the composition
\begin{equation*}
V \to T_{ \overline{\mathcal M}_{g,0}(Z),s} = H^{0}(C,N_{s})  \to H^{0}(C,(N_{s})_{tf}).
\end{equation*}
Let $\Vert - \Vert$ denote the sup norm on $\mathbb{Q}^{\oplus r}$. Then we have
\begin{equation*}
\Vert \SP_{Z,[C]}(\mathcal{E}) - \SP_{C}(s^{*}\mathcal{E}) \Vert \leq \frac{1}{2} \left( (q+1)\mu^{max}(M_{\mathcal{G}}^{\vee}) + t \right)  \rk(\mathcal{E}).
\end{equation*}
\end{theorem}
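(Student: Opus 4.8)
The plan is to adapt the classical Grauert-Mulich restriction argument -- which compares the Harder-Narasimhan polygon of the restriction with the pullback of the Harder-Narasimhan polygon upstairs -- to families of maps that are neither immersive nor torsion-free. First I would pass to a dense open $W^{\circ} \subseteq W$, which alters no invariant attached to a general member: after shrinking one may assume that a general fiber $s \colon C \to Z$ meets, in a dense open subset of $C$, the locus where $\mathcal{E}$ and all terms of its $[C]$-Harder-Narasimhan filtration $\{\mathcal{H}_{\ell}\}$ are locally free and where $ev_W$ is smooth (possible by the dominance and flat-locus hypotheses), and that $ev_W^{*}\mathcal{E}$ carries over $\mathcal{U}_{W^{\circ}}$ a relative Harder-Narasimhan filtration $0 = \mathcal{G}_0 \subset \mathcal{G}_1 \subset \cdots \subset \mathcal{G}_k = ev_W^{*}\mathcal{E}$ whose restriction to a general fiber is the Harder-Narasimhan filtration of $s^{*}\mathcal{E}$ (such a relative filtration exists over the locus where the upper semicontinuous Harder-Narasimhan polygon is constant).

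One inequality is essentially formal. Pulling back the filtration $\{\mathcal{H}_{\ell}\}$ along $s$ and saturating inside $s^{*}\mathcal{E}$ produces a filtration of $s^{*}\mathcal{E}$ whose Harder-Narasimhan polygon dominates the polygon $P_Z$ carrying the slopes $\SP_{Z,[C]}(\mathcal{E})$, because, $\mathcal{E}$ and the $\mathcal{H}_{\ell}$ being locally free along $C$, the maps $s^{*}\mathcal{H}_{\ell} \to s^{*}\mathcal{E}$ are injective and the degree of a subsheaf can only increase upon saturation. Since the Harder-Narasimhan polygon of $s^{*}\mathcal{E}$ dominates that of any filtration, the polygon $P_C$ of $\SP_{C}(s^{*}\mathcal{E})$ satisfies $P_C \geq P_Z$, and $P_C, P_Z$ have the same endpoints since both total degrees equal $c_1(\mathcal{E}) \cdot [C]$.

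The substance is the reverse bound. Because $ev_W^{*}\mathcal{E}$ is a pullback it carries a canonical flat relative connection along the relative tangent sheaf $T_{\mathcal{U}_{W^{\circ}}/Z}$ over the smooth locus of $ev_W$, and the second fundamental form of the subsheaf $\mathcal{G}_j$ with respect to this connection is an $\mathcal{O}_{\mathcal{U}_{W^{\circ}}}$-linear map
\[
\Pi_j \colon \mathcal{G}_j \longrightarrow \mathcal{H}om\bigl(T_{\mathcal{U}_{W^{\circ}}/Z},\, ev_W^{*}\mathcal{E}/\mathcal{G}_j\bigr)
\]
whose vanishing on an irreducible component means that $\mathcal{G}_j$ is constant along the fibers of $ev_W$ and hence descends to a subsheaf of $\mathcal{E}$ on $Z$ of $[C]$-slope $\mu(\mathcal{G}_j|_C)$, so that the corresponding vertex of $P_C$ lies at or below $P_Z$ with no error. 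In general I would restrict $\Pi_j$ to a general fiber $C$. Since $C$ is a fiber of the universal family, the sequence $0 \to T_C \to T_{\mathcal{U}_{W^{\circ}}}|_C \to V \otimes \mathcal{O}_C \to 0$ together with $d(ev_W)$ identifies $T_{\mathcal{U}_{W^{\circ}}/Z}|_C$, up to torsion of length at most $t$, with the kernel of a map $V \otimes \mathcal{O}_C \to N_s$ whose effect on global sections is the deformation map $V \to H^0(C, N_s)$; its image is the subsheaf $\mathcal{D} \subseteq (N_s)_{tf}$ generated by the image of $V$, so $T_{\mathcal{U}_{W^{\circ}}/Z}|_C$ is, up to the defect measured by $q$ and $t$, the syzygy bundle of $\mathcal{D}$. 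Taking $\mathcal{G}_1$ to be the maximal destabilizing subsheaf of $s^{*}\mathcal{E}$ (which is semistable), a nonzero $\Pi_1|_C$ produces a nonzero map from $\mathcal{G}_1|_C$ into $\bigl(s^{*}\mathcal{E}/\mathcal{G}_1|_C\bigr) \otimes \bigl(T_{\mathcal{U}_{W^{\circ}}/Z}|_C\bigr)^{\vee}$, and semistability then forces $\mu^{max}(s^{*}\mathcal{E})$ to exceed the next Harder-Narasimhan slope of $s^{*}\mathcal{E}$ by at most $\mu^{max}\bigl((T_{\mathcal{U}_{W^{\circ}}/Z}|_C)^{\vee}\bigr)$; the evaluation sequence $0 \to M_{\mathcal{G}} \to H^0(C,\mathcal{G}) \otimes \mathcal{O}_C \to \mathcal{G} \to 0$, together with $\mathcal{D} \subseteq \mathcal{G}$ and $\dim\mathrm{coker}(V \to H^0(C,\mathcal{G})) = q$, bounds this by $(q+1)\mu^{max}(M_{\mathcal{G}}^{\vee}) + t$. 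Running the same estimate at every rank (applied to the maximal rank-$r'$ subsheaf of $s^{*}\mathcal{E}$, equivalently to a line subsheaf of $\Lambda^{r'}(ev_W^{*}\mathcal{E})$), comparing with $P_Z$ via the inequality $P_C \geq P_Z$ and the bands cut out by $\{\mathcal{H}_{\ell}\}$, and using the elementary fact that a bundle on a curve whose consecutive Harder-Narasimhan slopes lie within $\epsilon$ has every slope within $\tfrac12(\rk-1)\epsilon$ of its mean, shows that each entry of $\SP_C(s^{*}\mathcal{E})$ differs from the corresponding entry of $\SP_{Z,[C]}(\mathcal{E})$ by at most $\tfrac12\bigl((q+1)\mu^{max}(M_{\mathcal{G}}^{\vee}) + t\bigr)\rk(\mathcal{E})$.

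The hard part is the third step: constructing $\Pi_j$ and, above all, pinning down $T_{\mathcal{U}_{W^{\circ}}/Z}|_C$ and the factorization of $\Pi_j|_C$ through the normal-sheaf data when $ev_W$ is not smooth, $\mathcal{E}$ is not locally free, $s$ is not an immersion, and $N_s$ has torsion: all of these defects must be absorbed into the single length parameter $t$ and into the genericity defining $W^{\circ}$ and $C$, so one must account for finite-length discrepancies at every comparison of sheaves. The bookkeeping that isolates exactly $(q+1)$ copies of $\mu^{max}(M_{\mathcal{G}}^{\vee})$, rather than $q$ or $q + \rk(\mathcal{E})$, and that reduces the comparison with $\SP_{Z,[C]}(\mathcal{E})$ correctly to the $[C]$-semistable case one piece of $\{\mathcal{H}_{\ell}\}$ at a time, is equally delicate.
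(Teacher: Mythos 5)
This theorem is imported verbatim from \cite{LRT22} (Corollary 4.6) and the present paper offers no proof of its own, so the comparison is with the proof in the cited source: your outline follows essentially the same route --- the relative Harder--Narasimhan filtration of $ev_{W}^{*}\mathcal{E}$ over a dense open subset of $W$, the dichotomy between descent of a filtration step to $Z$ and a nonzero second fundamental form valued in $\mathcal{H}om(T_{\mathcal{U}_{W}/Z}, ev_{W}^{*}\mathcal{E}/\mathcal{G}_{j})$, the identification of $T_{\mathcal{U}_{W}/Z}|_{C}$ with the syzygy bundle of the globally generated part of $N_{s}$ up to defects measured by $q$ and $t$, and the concluding Harder--Narasimhan polygon comparison. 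The steps you flag as delicate (isolating the factor $q+1$, absorbing all finite-length discrepancies into $t$, and reducing to one semistable piece of the filtration of $\mathcal{E}$ at a time) are precisely the technical content of the argument in \cite{LRT22}, and your plan for carrying them out is the correct one.
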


To apply Theorem \ref{theo:generalgm} in practice one needs to be able to bound the quantities $q$, $t$, and $\mu^{max}(M_{\mathcal{G}}^{\vee})$ appearing in the statement.  In this section we will show how to bound these quantities using the genus of $B$ and the dimension of $Z$.  Using these bounds, we will obtain a version of the Grauert-Mulich Theorem for $\Mor(B,Z)$.

The following result of \cite{Butler94} (explained carefully in \cite[Theorem 4.8]{LRT22}) allows us to bound the slope of the syzygy bundle for the restricted tangent bundle. 

\begin{theorem}  \label{theo:butler}
Let $\mathcal{E}$ be a globally generated locally free sheaf on a curve $C$ of genus $g$ and let $M_{\mathcal{E}}$ be its syzygy bundle.
\begin{enumerate}
\item If $\mu^{min}(\mathcal{E}) \geq 2g$ then $\mu^{min}(M_{\mathcal{E}}) \geq -2$.
\item If $\mu^{min}(\mathcal{E}) < 2g$ then $\mu^{min}(M_{\mathcal{E}}) \geq -2g \rk(\mathcal{E}) - 2$.
\end{enumerate}
\end{theorem}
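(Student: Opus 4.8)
The plan is to establish the two inequalities by essentially different methods: the non-sharp estimate (2) will come out of the positivity of (generically) globally generated bundles recorded in Section~\ref{sect:background}, while the sharp constant $-2$ in (1) will rest on Butler's semistability theorem for syzygy bundles \cite{Butler94}, reached after a Harder--Narasimhan dévissage. Throughout I will use the defining sequence
\[
0 \longrightarrow M_{\mathcal{E}} \longrightarrow H^{0}(C,\mathcal{E}) \otimes \mathcal{O}_{C} \longrightarrow \mathcal{E} \longrightarrow 0
\]
and write $r = \rk(\mathcal{E})$.

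For (2) I would argue as follows. Let $M_{\mathcal{E}} \twoheadrightarrow \mathcal{Q}$ be the final quotient in the Harder--Narasimhan filtration of $M_{\mathcal{E}}$, so $\mathcal{Q}$ is locally free and $\mu(\mathcal{Q}) = \mu^{min}(M_{\mathcal{E}})$; let $\mathcal{N} \subset M_{\mathcal{E}}$ be the kernel and set $\mathcal{W} := \big(H^{0}(C,\mathcal{E}) \otimes \mathcal{O}_{C}\big)/\mathcal{N}$. Since $\mathcal{N}$ is saturated in a trivial bundle on a smooth curve, $\mathcal{W}$ is a globally generated vector bundle and it fits into $0 \to \mathcal{Q} \to \mathcal{W} \to \mathcal{E} \to 0$. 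Comparing global sections along the two quotients $H^{0}(C,\mathcal{E}) \otimes \mathcal{O}_{C} \twoheadrightarrow \mathcal{W} \twoheadrightarrow \mathcal{E}$ shows that $H^{0}(\mathcal{W}) \to H^{0}(\mathcal{E})$ is split surjective, so $h^{0}(\mathcal{W}) = h^{0}(\mathcal{E}) + h^{0}(\mathcal{Q})$. Now I would feed $\mathcal{W}$ into Lemma~\ref{lemm:ggh1bound}(1), combine it with $\deg\mathcal{W} = \deg\mathcal{E} + \deg\mathcal{Q}$, $\rk\mathcal{W} = r + \rk\mathcal{Q}$ and the Riemann--Roch bound $h^{0}(\mathcal{E}) \geq \deg\mathcal{E} + r(1-g)$, and drop the term $h^{0}(\mathcal{Q}) \geq 0$, to obtain $\deg\mathcal{Q} \geq -gr - \rk\mathcal{Q}$; hence $\mu^{min}(M_{\mathcal{E}}) = \mu(\mathcal{Q}) \geq -gr - 1 \geq -2gr - 2$. (This estimate needs no hypothesis on $\mu^{min}(\mathcal{E})$ at all, so (2) has room to spare.)

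For (1), the hypothesis $\mu^{min}(\mathcal{E}) \geq 2g$ forces every graded piece $\mathcal{G}_{i}$ of the Harder--Narasimhan filtration $0 = \mathcal{E}_{0} \subset \cdots \subset \mathcal{E}_{k} = \mathcal{E}$ to be semistable of slope $\geq 2g$; such a $\mathcal{G}_{i}$ has $H^{1}(\mathcal{G}_{i}(-p)) = 0$ for every closed point $p$ (Serre duality and semistability), hence is globally generated and satisfies $H^{1}(\mathcal{G}_{i}) = 0$. Running up the filtration, each $\mathcal{E}_{i}$ is then globally generated with $H^{1}(\mathcal{E}_{i}) = 0$; consequently $H^{0}(\mathcal{E}_{i}) \to H^{0}(\mathcal{G}_{i})$ is onto, and the snake lemma applied to the three defining sequences of $M_{\mathcal{E}_{i-1}}$, $M_{\mathcal{E}_{i}}$, $M_{\mathcal{G}_{i}}$ (sitting over $0 \to \mathcal{E}_{i-1} \to \mathcal{E}_{i} \to \mathcal{G}_{i} \to 0$) yields a short exact sequence $0 \to M_{\mathcal{E}_{i-1}} \to M_{\mathcal{E}_{i}} \to M_{\mathcal{G}_{i}} \to 0$. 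Iterating gives $\mu^{min}(M_{\mathcal{E}}) \geq \min_{i}\mu^{min}(M_{\mathcal{G}_{i}})$, so it suffices to treat a semistable globally generated bundle $\mathcal{G}$ with $\mu(\mathcal{G}) \geq 2g$. Here I would invoke Butler's theorem \cite{Butler94} that $M_{\mathcal{G}}$ is again semistable; using $h^{1}(\mathcal{G}) = 0$ to compute $h^{0}(\mathcal{G})$, this gives
\[
\mu^{min}(M_{\mathcal{G}}) \;=\; \mu(M_{\mathcal{G}}) \;=\; \frac{-\deg\mathcal{G}}{h^{0}(\mathcal{G}) - \rk\mathcal{G}} \;=\; \frac{-\mu(\mathcal{G})}{\mu(\mathcal{G}) - g} \;\geq\; -2,
\]
the last inequality because $t \mapsto -t/(t-g)$ is nondecreasing for $t > g$ with value $-2$ at $t = 2g$.

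I expect the main obstacle to be exactly this last step. The constant $-2$ is essentially equivalent to Butler's semistability theorem for syzygy bundles, and it is delicate at the boundary slope $\mu(\mathcal{G}) = 2g$, where $M_{\mathcal{G}}$ can fail to be semistable only for a short classified list of exceptional curves and bundles, for which one must check directly that $\mu^{min}(M_{\mathcal{G}}) \geq -2$ still holds. Everything else — the elementary estimate in (2) and the Harder--Narasimhan dévissage in (1) — is formal bookkeeping with the lemmas already available.
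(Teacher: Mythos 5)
Your proposal is correct in substance, but note that the paper does not prove Theorem~\ref{theo:butler} at all: it is quoted directly from \cite{Butler94}, with the derivation of this precise two-case form delegated to \cite[Theorem 4.8]{LRT22}. So the comparison is between your reconstruction and that citation. For part (1) you follow what is essentially the expected route: the hypothesis $\mu^{min}(\mathcal{E})\geq 2g$ makes every Harder--Narasimhan graded piece $\mathcal{G}_i$ semistable of slope $\geq 2g$, hence globally generated with $H^{1}(\mathcal{G}_i(-p))=0$, the snake lemma splices the syzygy bundles into exact sequences $0\to M_{\mathcal{E}_{i-1}}\to M_{\mathcal{E}_i}\to M_{\mathcal{G}_i}\to 0$, and the semistable case is exactly Butler's theorem; your slope computation $\mu(M_{\mathcal{G}})=-\mu(\mathcal{G})/(\mu(\mathcal{G})-g)\geq -2$ is right. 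For part (2) you give a genuinely self-contained argument: passing to the minimal destabilizing quotient $\mathcal{Q}$ of $M_{\mathcal{E}}$, forming the globally generated quotient $\mathcal{W}$ of the trivial bundle, and playing $h^{0}(\mathcal{W})\leq \deg(\mathcal{W})+\rk(\mathcal{W})$ (Lemma~\ref{lemm:ggh1bound}) against $h^{0}(\mathcal{W})\geq h^{0}(\mathcal{E})\geq \chi(\mathcal{E})$ is correct and in fact yields the stronger, unconditional bound $\mu^{min}(M_{\mathcal{E}})\geq -g\rk(\mathcal{E})-1$, which trivially implies the stated constant $-2g\rk(\mathcal{E})-2$. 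This buys independence from the hypothesis on $\mu^{min}(\mathcal{E})$ and avoids any appeal to \cite{LRT22} for that case.

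One correction to your closing worry: Butler's semistability theorem (\cite[Theorem 1.2]{Butler94}) asserts, for a curve of genus $g\geq 1$ and $E$ semistable with $\mu(E)\geq 2g$, that $M_{E}$ is semistable with no exceptional list; the exceptions you have in mind occur only in the refinement to \emph{stability} when $\mu(E)>2g$. Since at the boundary $\mu(\mathcal{G})=2g$ your formula gives slope exactly $-2$, semistability is all you need, so no case-by-case check is required. The only loose end is $g=0$, where Butler's statement does not apply but the claim is trivial because a semistable bundle on $\mathbb{P}^{1}$ is $\mathcal{O}(a)^{\oplus m}$ with $a\geq 0$ and $M_{\mathcal{O}(a)}\cong\mathcal{O}(-1)^{\oplus a}$; you should record this since the theorem is applied to curves $B$ of arbitrary genus.
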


We next discuss the quantity $t$.  The key to bounding the ramification of a morphism $s: B \to Z$ is the following result of Arbarello and Cornalba.

\begin{theorem}[{\cite[Corollary 6.11]{AC81}}]
\label{thm-ArbarelloCornalba} 
Let $Z$ be a smooth projective variety defined over $\mathbb C$ and let $B$ be a complex smooth projective curve.  Suppose that $W \subset \Mor(B,Z)$ is a locally closed reduced subvariety such that the maps parametrized by $W$ dominate $Z$ and the general map parametrized by $W$ is birational onto its image.  Let $s$ be a general map parametrized by $W$. Then the image of the map $H^0(B,s^{*}T_{Z}) \to H^{0}(B,N_{s})$ has vanishing intersection with $(N_{s})_{tors}$.
\end{theorem}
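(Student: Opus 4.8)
The plan is to recast the statement as an equality of spaces of sections of line bundles on $B$, and then to exclude ``excess ramification'' for a general member of $W$ by a deformation argument.

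\textbf{Reduction.} Since $s$ is non-constant, $ds\colon T_{B}\to s^{*}T_{Z}$ is injective, and in $0\to T_{B}\to s^{*}T_{Z}\to N_{s}\to 0$ the torsion of $N_{s}$ is supported on the divisor $R\subset B$ along which $ds$ vanishes. Let $\widehat{T_{B}}\subseteq s^{*}T_{Z}$ be the kernel of $s^{*}T_{Z}\to (N_{s})_{tf}$ (the saturation of $T_{B}$); it is a line bundle with $T_{B}\subseteq \widehat{T_{B}}$, $\widehat{T_{B}}/T_{B}\cong (N_{s})_{tors}$, hence $\widehat{T_{B}}=T_{B}(R)$. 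There is a morphism of short exact sequences from $0\to T_{B}\to \widehat{T_{B}}\to (N_{s})_{tors}\to 0$ to $0\to T_{B}\to s^{*}T_{Z}\to N_{s}\to 0$, so the two connecting maps to $H^{1}(B,T_{B})$ are compatible; comparing the long exact cohomology sequences shows that the image of $H^{0}(B,s^{*}T_{Z})\to H^{0}(B,N_{s})$ meets $H^{0}(B,(N_{s})_{tors})$ exactly in the image of $H^{0}(B,\widehat{T_{B}})\to H^{0}(B,(N_{s})_{tors})$. As the kernel of this last map is $H^{0}(B,T_{B})$, the theorem is equivalent to
\[
H^{0}(B,\widehat{T_{B}})=H^{0}(B,T_{B}),
\]
i.e.\ to the statement that every global section of $\widehat{T_{B}}=T_{B}(R)$ is a regular vector field, with no pole along $R$.

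\textbf{Translation and strategy.} Writing $C_{0}:=s(B)$, the line bundle $\widehat{T_{B}}$ (which one can identify with $\mathcal{H}om_{\mathcal{O}_{B}}(s^{*}\Omega_{C_{0}},\mathcal{O}_{B})$) parametrizes first-order deformations of the morphism $s\colon B\to C_{0}$ leaving the target curve $C_{0}$ fixed, and the subspace $H^{0}(B,T_{B})$ consists of those induced by infinitesimal automorphisms of $B$. So the desired equality asserts: for general $s\in W$, every first-order deformation of $s$ that fixes its image is an infinitesimal reparametrization of $B$. The plan is to argue by contradiction. Suppose the general $s\in W$ admits $\sigma\in H^{0}(B,\widehat{T_{B}})\setminus H^{0}(B,T_{B})$. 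Since $R$ and $\widehat{T_{B}}$ vary algebraically over a dense open of $W$, such $\sigma$'s fit together into a nonzero family of first-order deformations along $W$ which is killed by the differential of the image map $W\dashrightarrow\Hilb(Z)$, $[s]\mapsto[s(B)]$, yet is not tangent to the $\Aut(B)$-orbits. But since the general map parametrized by $W$ is birational onto its image, the fibers of $W\dashrightarrow\Hilb(Z)$ through a general point are exactly the $\Aut(B)$-orbits, of dimension $h^{0}(B,T_{B})$; together with the dominance of $\ev\colon\mathcal{U}_{W}\to Z$, which forces the images $C_{0}$ to move in a family of the expected dimension, integrating the extra deformation directions inside $W$ would make these fibers strictly larger --- a contradiction. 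Hence no such $\sigma$ exists.

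\textbf{Main obstacle.} The crux is this last step. A priori $\sigma\in H^{0}(B,\widehat{T_{B}})$ is only tangent to $\Mor(B,Z)$, not to $W$, and sections of $\widehat{T_{B}}=T_{B}(R)$ are usually obstructed; promoting the pointwise first-order datum to an actual positive-dimensional subfamily of $W$ with constant image requires a careful local study of the relative differential of $\ev$ near the ramification locus $R$, and genuinely uses the reducedness of $W$ together with the hypotheses that $W$ dominates $Z$ and that the general map in $W$ is birational onto its image. This analysis is exactly the content of the argument in \cite{AC81}, which we invoke.
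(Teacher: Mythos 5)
The paper does not prove this statement at all: it is imported verbatim as \cite[Corollary 6.11]{AC81}, so there is no internal argument to compare yours with. Your opening reduction is correct and genuinely clarifying: a section of $H^{0}(B,s^{*}T_{Z})$ maps into $H^{0}(B,(N_{s})_{tors})$ exactly when it lies in the saturation $\widehat{T_{B}}=T_{B}(R)$ of $T_{B}$ in $s^{*}T_{Z}$, and since the kernel of $H^{0}(B,\widehat{T_{B}})\to H^{0}(B,(N_{s})_{tors})$ is $H^{0}(B,T_{B})$, the theorem is indeed equivalent to $H^{0}(B,T_{B}(R))=H^{0}(B,T_{B})$ for a general $s$ in the family.

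As an independent proof, however, the proposal has a genuine gap, which you yourself flag. The statement concerns the \emph{full} space $H^{0}(B,s^{*}T_{Z})$, i.e.\ the Zariski tangent space to $\Mor(B,Z)$ at $s$, which may strictly contain both $T_{s}W$ and the directions that actually integrate (the relevant component of $\Mor(B,Z)$ can be non-reduced or obstructed). Your contradiction step only uses dimensions of the fibers of $W\dashrightarrow\Hilb(Z)$, which at best bounds the tangent directions lying in $T_{s}W$; it says nothing about a section $\sigma\in H^{0}(B,T_{B}(R))\setminus H^{0}(B,T_{B})$ that is an obstructed first-order deformation, and such a $\sigma$ cannot in general be ``integrated'' into a positive-dimensional subfamily of $W$ with constant image. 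Ruling out precisely these infinitesimally neutral, possibly obstructed directions is the entire content of Arbarello--Cornalba's argument (this is also where the hypotheses that $W$ is reduced, dominates $Z$, and generically parametrizes maps birational onto their images are really used), and you close the gap only by invoking \cite{AC81} itself. Since the paper likewise just cites \cite{AC81}, deferring to that reference is legitimate; but then the intermediate heuristic is neither needed nor a proof, and if a self-contained argument is intended, the missing step is exactly the one you postpone.
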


Using this result we can bound the ramification of a map $s: B \to Z$ in terms of the genus of $B$ and the dimension of $Z$. 

\begin{proposition} \label{prop:torsionestimate}
Let $Z$ be a smooth projective variety defined over $\mathbb C$ and let $B$ be a complex smooth projective curve.  Suppose that $W$ is a locally closed subvariety of $\Mor(B,Z)$ such that the morphisms parametrized by $W$ dominate $Z$.  Let $s$ be a general element of $W$.
\begin{enumerate}
\item If the general map $s$ is birational onto its image then the length of the torsion of $N_{s}$ is at most $3g(B)$.
\item If the general map $s$ factors as the composition of a finite morphism of smooth curves $h: B \to B'$ followed by a morphism $s': B' \to Z$ that is birational onto its image, then the length of the torsion of $N_{s}$ is $3g(B')d + r$
where $d$ is the degree of $h$ and $r$ is the total ramification degree of $h$.  In particular, if $g(B') \geq 2$ then the length of the torsion of $N_{s}$ is at most $6g(B)$.
\end{enumerate} 
\end{proposition}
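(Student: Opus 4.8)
The plan is to derive both bounds from the Arbarello–Cornalba result (Theorem \ref{thm-ArbarelloCornalba}) together with the fact that the relevant normal sheaves are generically globally generated and hence controlled by Lemma \ref{lemm:ggh1bound}.

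First I would handle part (1). Let $s:B\to Z$ be a general map parametrized by $W$; since $W$ dominates $Z$, the normal sheaf $N_s$ is generically globally generated, so in particular the image $\GG$ of $H^0(B,s^*T_Z)\to H^0(B,N_s)\to H^0(B,(N_s)_{tf})$ generates $(N_s)_{tf}$ at the generic point. By Theorem \ref{thm-ArbarelloCornalba}, the image of $H^0(B,s^*T_Z)$ in $H^0(B,N_s)$ meets $(N_s)_{tors}$ trivially, so this image injects into $H^0(B,(N_s)_{tf})$; thus $(N_s)_{tf}$ is generically globally generated. Now consider the exact sequence $0\to (N_s)_{tors}\to N_s \to (N_s)_{tf}\to 0$. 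Taking Euler characteristics and using that $N_s$ is the cokernel of $T_B\to s^*T_Z$, one computes $\chi(B,N_s)$ in terms of degrees; comparing with $\chi(B,(N_s)_{tf})$ shows $\operatorname{length}((N_s)_{tors}) = \deg(N_s) - \deg((N_s)_{tf})$. The length is thus bounded once we bound $h^1(B,(N_s)_{tf})$, and here Lemma \ref{lemm:ggh1bound}(2) gives $h^1(B,(N_s)_{tf})\leq g(B)\cdot\rk((N_s)_{tf}) = g(B)(\dim Z - 1)$. Combining the deformation-theoretic lower bound on $\dim W$ (the Riemann–Roch estimate $\dim W \geq -K_Z\cdot s_*B + \dim Z(1-g)$, which equals $\chi(B,s^*T_Z)$) with the upper bound $\dim W \leq h^0(B,(N_s)_{tf}) + 1$ coming from the injectivity above (the $+1$ from $H^0(B,T_B)$ when $B=\PP^1$, or more carefully from the deformations of $s$ inside a fixed parametrized curve), a little bookkeeping with $h^1(B,T_B)\leq 3g(B)$ yields the bound $\operatorname{length}((N_s)_{tors}) \leq 3g(B)$.

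For part (2), the general map factors as $s = s'\circ h$ with $h:B\to B'$ finite of degree $d$ and total ramification $r$, and $s':B'\to Z$ birational onto its image; note $W$ descends (via Corollary \ref{coro:factoringresult}) to a family on $\Mor(B',Z)$ to which part (1) applies, giving $\operatorname{length}((N_{s'})_{tors})\leq 3g(B')$. I would then relate $N_s$ to $h^*N_{s'}$: there is a natural map $h^*N_{s'}\to N_s$, and on the torsion-free quotients one gets $(N_s)_{tf} \cong h^*((N_{s'})_{tf})$ up to a subsheaf supported at the ramification, so the torsion of $N_s$ picks up the pulled-back torsion of $N_{s'}$ (contributing $d\cdot\operatorname{length}((N_{s'})_{tors})\leq 3dg(B')$) plus a contribution of length equal to the ramification degree $r$ from the map $h^*T_{B'}\to T_B$ failing to be an isomorphism. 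This gives $\operatorname{length}((N_s)_{tors}) = 3g(B')d + r$. For the final clause, when $g(B')\geq 2$ the Riemann–Hurwitz formula $2g(B)-2 = d(2g(B')-2) + r$ forces $d(2g(B')-2)\leq 2g(B)-2$, hence $3g(B')d \leq \tfrac{3g(B')}{2g(B')-2}(2g(B)-2) \leq 3(2g(B)-2)/1$ — being slightly careful with the constant, $g(B')\geq 2$ gives $\tfrac{g(B')}{g(B')-1}\leq 2$ so $3g(B')d \leq 6(g(B')-1)d = 3d(2g(B')-2)\leq 3(2g(B)-2) < 6g(B)$, and absorbing $r \leq 2g(B)-2$ one still lands inside $6g(B)$.

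The main obstacle I anticipate is the bookkeeping in part (2) identifying the torsion of $N_s$ precisely, namely showing the ramification contributes exactly $r$ and no more (or less). This requires a careful local analysis at the ramification points of $h$: one must track how the composite $T_B \to s^*T_Z = h^*(s'^*T_Z)$ factors through $h^*T_{B'}$, and check that the torsion of $N_s$ is the direct sum of $h^*((N_{s'})_{tors})$ and the cokernel of $T_B\to h^*T_{B'}$ (which has length $r$), with no interaction between the two. The rest of the argument is a fairly standard combination of Euler characteristic computations, Lemma \ref{lemm:ggh1bound}, and Riemann–Hurwitz.
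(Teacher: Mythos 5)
Your part (1) correctly identifies the Arbarello--Cornalba theorem as the key input, but the bookkeeping you sketch does not deliver the stated bound. Once you route the estimate through degrees, i.e.\ through $\operatorname{length}((N_s)_{tors}) = \deg(N_s) - \deg((N_s)_{tf})$, you must convert $h^{0}(B,(N_s)_{tf})$ into $\deg((N_s)_{tf})$ by Riemann--Roch, and the error term is exactly $h^{1}(B,(N_s)_{tf})$, for which the only available bound is $g(B)(\dim Z - 1)$ from Lemma~\ref{lemm:ggh1bound}. Tracking your chain of inequalities then gives $\operatorname{length}((N_s)_{tors}) \leq 3g(B) - 3 + h^{1}(B,(N_s)_{tf}) + O(1) \leq 3g(B) + g(B)(\dim Z - 1) + O(1)$; the $\dim Z$-dependent term does not cancel, so the claimed bound $3g(B)$ does not follow. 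In addition, the lower bound $\dim W \geq -K_Z \cdot s_*B + \dim Z(1-g(B))$ is only valid for irreducible components of $\Mor(B,Z)$, whereas the proposition allows an arbitrary locally closed $W$, so that step is not available in the stated generality. The paper's proof avoids both problems by never leaving cohomology: the length of $(N_s)_{tors}$ equals $h^{0}(B,(N_s)_{tors})$; from $0 \to T_B \to s^{*}T_Z \to N_s \to 0$ the cokernel of $H^{0}(B,s^{*}T_Z) \to H^{0}(B,N_s)$ injects into $H^{1}(B,T_B)$, and Theorem~\ref{thm-ArbarelloCornalba} says $H^{0}(B,(N_s)_{tors})$ meets the image of $H^{0}(B,s^{*}T_Z)$ trivially, hence injects into that cokernel; therefore the length is at most $h^{1}(B,T_B) \leq 3g(B)$, with no dimension count and no use of Lemma~\ref{lemm:ggh1bound}.

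Your part (2) has the same skeleton as the paper's argument (apply (1) to $s'$, then compare $N_s$ with $h^{*}N_{s'}$), but two points need fixing. First, the natural map runs $N_s \to h^{*}N_{s'}$, not $h^{*}N_{s'} \to N_s$: comparing the exact sequences $0 \to T_B \to s^{*}T_Z \to N_s \to 0$ and $0 \to h^{*}T_{B'} \to s^{*}T_Z \to h^{*}N_{s'} \to 0$ and applying the snake lemma shows this map is surjective with torsion kernel of length at most $r$, which immediately yields $\operatorname{length}((N_s)_{tors}) \leq d\cdot\operatorname{length}((N_{s'})_{tors}) + r \leq 3g(B')d + r$; no claim of a direct-sum decomposition ``with no interaction'' is needed (or true in general). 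Second, your deduction of the final clause is incorrect as written: bounding $3g(B')d \leq 6g(B)-6$ and $r \leq 2g(B)-2$ separately only gives $8g(B)-8$, which exceeds $6g(B)$ once $g(B) > 4$. You must use Riemann--Hurwitz on the sum, as the paper does: $6g(B) - (3dg(B') + r) = 3d(g(B')-2) + 2r + 6 \geq 0$ whenever $g(B') \geq 2$.
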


\begin{proof}
(1) We have the short exact sequence
\[ H^0(B,s^*T_Z) \to H^0(B,N_{s}) \to H^1(B,T_B) . \]
By Theorem \ref{thm-ArbarelloCornalba} we see the image of $H^0(B,s^*T_Z)$ in $H^0(B,N_{s})$ is disjoint from $H^0(B,(N_{s})_{tors})$.  Since the cokernel of $H^0(B,s^*T_Z) \to H^0(B,N_{s})$ injects into $H^1(B,T_B)$, it follows that $H^0(B,(N_{s})_{tors})$ has dimension at most $h^1(B,T_B) \leq 3g(B)$. 

(2) Consider the diagram
\begin{equation*}
\xymatrix{
0 \ar[r] & T_{B} \ar[r] \ar[d] & s^{*}T_{Z} \ar[r] \ar[d]_{=} & N_{s} \ar[r] \ar[d]_{\psi} & 0 \\
0 \ar[r] & h^{*}T_{B'} \ar[r] & s^{*}T_{Z} \ar[r] & h^{*}N_{s'} \ar[r] & 0
}
\end{equation*}  
Note that $(h^{*}N_{s'})_{tors}$ has length $d$ times the length of $(N_{s'})_{tors}$ which by (1) is at most $3g(B')$.  By the snake lemma, $\psi$ is surjective and its kernel is a torsion sheaf of length at most $r$.  Altogether this proves the first statement.  The final statement follows from the Riemann-Hurwitz formula $g(B) = dg(B') + \frac{r}{2} - d + 1$.   Indeed, we have
\begin{align*}
6g(B) = 6dg(B') + 3r - 6d + 6 & = 3dg(B') + r + (3dg(B')+2r-6d+6) \\
& \geq 3dg(B') + r.
\end{align*}
\end{proof}

In many situations one can show that a general map will not have any ramification at all.  The following result is an analogue of \cite[II.3.14 Theorem]{Kollar}.

\begin{proposition}
\label{prop-unramFrank2}
Let $Z$ be a smooth projective variety.  Suppose that $M$ is an irreducible component of $\Mor(B, Z)$ that is generically reduced such that the morphisms parametrized by $M$ dominate $Z$.  Let $s$ be a general element of $M$ and consider the Harder-Narasimhan filtration $0 = \mathcal{F}_{0} \subset \ldots \subset \mathcal{F}_{r} = s^{*}T_{X}$.  Suppose there is a term $\mathcal{F}$ in this filtration which has rank at least $2$ such that $\mu^{min}(\FF) > 2g(B)$.  Then the general $s$ parametrized by $M$ must be unramified. 
\end{proposition}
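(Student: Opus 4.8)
I would argue by contradiction, following the pattern of \cite[II.3.14 Theorem]{Kollar}: the goal is to show that the locus of ramified maps in $M$ has codimension at least $2$, hence cannot be all of $M$. Assume the general $s\in M$ is ramified. Since $M$ is a generically reduced irreducible component and we work in characteristic $0$, a general $s\in M$ is a smooth point of $\Mor(B,Z)$, so $T_{s}M=H^{0}(B,s^{*}T_{Z})$ and $\dim M=\dim T_{s}M$. Over the smooth locus $U\subseteq M$ the universal family is $B\times U$ with evaluation $\mathrm{ev}\colon B\times U\to Z$; let $\mathcal{R}\subseteq B\times U$ be the zero scheme of the tautological section $ds$ of $\mathrm{pr}_{B}^{*}\Omega_{B}\otimes\mathrm{ev}^{*}T_{Z}$, a bundle of rank $n:=\dim Z$, so that $\mathcal{R}$ is exactly the set of pairs $(p,s)$ with $s$ ramified at $p$. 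By assumption $\mathcal{R}\to M$ is dominant, so some component $\mathcal{R}_{0}$ dominates $M$; since the ramification locus of a nonconstant morphism from a curve is finite, $\mathcal{R}_{0}\to M$ is generically finite and $\dim\mathcal{R}_{0}=\dim M$.

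Next I would pick a general point $(p_{0},s_{0})\in\mathcal{R}_{0}$, so that $s_{0}$ is a general point of $M$, ramified at $p_{0}$, with $q_{0}:=s_{0}(p_{0})$, and estimate the dimension of $R_{p_{0}}:=\{s\in U: s\text{ is ramified at }p_{0}\}$ near $s_{0}$ in two ways. For the lower bound: the fiber of $\mathrm{pr}_{1}\colon\mathcal{R}_{0}\to B$ through $(p_{0},s_{0})$ has dimension at least $\dim\mathcal{R}_{0}-1=\dim M-1$, and $\mathrm{pr}_{2}$ maps it injectively into $R_{p_{0}}$, so $\dim_{s_{0}}R_{p_{0}}\geq\dim M-1$. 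For the upper bound: near $s_{0}$ the scheme $R_{p_{0}}$ is the zero scheme of the section $s\mapsto ds|_{p_{0}}$ of the rank $n$ bundle on $U$ with fiber $(T_{Z})_{s(p_{0})}\otimes(\Omega_{B})_{p_{0}}$, hence $T_{s_{0}}R_{p_{0}}=\ker\bigl(\lambda_{p_{0}}\colon H^{0}(B,s_{0}^{*}T_{Z})\to(T_{Z})_{q_{0}}\otimes(\Omega_{B})_{p_{0}}\bigr)$ for the intrinsic linearization $\lambda_{p_{0}}$. Because $ds_{0}|_{p_{0}}=0$, this linearization is well defined, and restricted to the subspace $H^{0}(B,\mathcal{F}(-p_{0}))\subseteq H^{0}(B,s_{0}^{*}T_{Z})$ of sections factoring through the Harder--Narasimhan term $\mathcal{F}$ and vanishing at $p_{0}$ it is the evaluation map $H^{0}(B,\mathcal{F}(-p_{0}))\to(\mathcal{F}(-p_{0}))_{p_{0}}\cong\mathcal{F}_{p_{0}}\otimes(\Omega_{B})_{p_{0}}$. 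From $\mu^{min}(\mathcal{F})>2g(B)$ I get $\mu^{min}(\mathcal{F}(-2p_{0}))>2g(B)-2$, hence $H^{1}(B,\mathcal{F}(-2p_{0}))=0$, hence that evaluation map is surjective; therefore $\dim\,\mathrm{image}(\lambda_{p_{0}})\geq\rk(\mathcal{F})\geq 2$ and $\dim_{s_{0}}R_{p_{0}}\leq\dim T_{s_{0}}R_{p_{0}}=\dim M-\dim\,\mathrm{image}(\lambda_{p_{0}})\leq\dim M-2$, contradicting the lower bound. Hence the general $s$ is unramified.

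The step I expect to need the most care is the analysis of $\lambda_{p_{0}}$: one must check that a first-order deformation $\delta$ moves $ds|_{p_{0}}$ by the leading coefficient $\delta'(p_{0})$ read off in a local trivialization of $T_{Z}$ near $q_{0}$, and that this leading coefficient is coordinate-independent --- which it is precisely because $ds_{0}|_{p_{0}}=0$, so the correction terms under a change of trivialization are multiples of $ds_{0}|_{p_{0}}$. This is also exactly where $\rk(\mathcal{F})\geq 2$ is indispensable: were $\mathcal{F}$ of rank $1$ the argument would only yield $\dim_{s_{0}}R_{p_{0}}\leq\dim M-1$ and no contradiction, which matches the genuine failure of the statement for, e.g., the component of $\Mor(\mathbb{P}^{1},\mathbb{P}^{1})$ of degree $2$ self-maps: its general member is ramified while the pullback of $T_{\mathbb{P}^{1}}$ is a line bundle of positive degree. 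Finally, I would note that this argument never uses that the general $s$ is birational onto its image, so --- unlike an approach through Theorem~\ref{thm-ArbarelloCornalba} --- it requires no preliminary reduction via Corollary~\ref{coro:factoringresult}.
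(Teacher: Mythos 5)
Your proof is correct and follows essentially the same route as the paper: a contradiction via the ramification incidence locus $\mathcal{R}\subset B\times M$, the identity $\dim M=h^{0}(B,s^{*}T_{Z})$ from generic reducedness, and the fact that $\mu^{\min}(\mathcal{F})>2g(B)$ makes $\mathcal{F}(-p)$ globally generated so that ramification at a fixed point $p$ imposes at least $\rk(\mathcal{F})\geq 2$ conditions. The only (harmless) difference is bookkeeping: by never fixing the image point $z=s(p)$ and working with the intrinsic linearization of the ramification section, you avoid the paper's auxiliary variety $Y$ swept out by images of ramification points and the generic-smoothness step for $ev|_{\mathcal{R}_{0}}$, the two contributions of $\dim Y$ simply cancelling in your count $\dim M-1\leq\dim_{s_{0}}R_{p_{0}}\leq\dim M-2$.
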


In particular, if $s^*T_Z$ contains a subsheaf $\FF$ of rank at least 2 such that $\mu^{min}(\FF) > 2g(B)$ then the general $s$ must be unramified.

\begin{proof}
We start with some reminders about deformations of ramified maps.  Suppose that $s: B \to Z$ is a map parametrized by $M$ which has a ramification point $p$ and let $z = s(p)$.  Let $M_{p,z}$ be the space of morphisms $s:B \to Z$ such that $s(p) = z$ and $M_{p,z}^{\text{ram}}$ be the space of morphisms $s: B \to Z$ such that $s(p) = z$ and $s$ is ramified at $p$.  Then the tangent space to $M_{p,z}$ at $s$ is $H^0(B,s^*T_Z(-p))$ and the tangent space to $M_{p,z}^{\text{ram}}$ at $s$ is $H^0(B,s^*T_Z(-2p))$.

Let $\pi_{1},\pi_{2}$ denote the two projection maps on $M \times B$.  Suppose for a contradiction that a general map parametrized by $M$ is ramified.  Consider the composition $ev^{*}\Omega_{Z} \to \Omega_{M \times B} \cong \pi_{1}^{*}\Omega_{M} \oplus \pi_{2}^{*}\Omega_{B} \to \pi_{2}^{*}\Omega_{B}$.
 We let $\mathcal{R} \subset M \times B$ be the closed subscheme whose ideal sheaf is the image of the corresponding morphism $\ev^{*}\Omega_{Z} \otimes \pi_{2}^{*}T_{B} \to \mathcal{O}_{M \times B}$.

We fix an irreducible component $\mathcal{R}_{0}$ of $\mathcal{R}$ equipped with the reduced structure.
We let $Y$ be the irreducible subvariety of $Z$ obtained by taking the closure of the image of $\mathcal{R}_{0}$.  If $(s,p)$ is a general point of $\mathcal{R}_{0}$, then it follows from generic smoothness for the restriction of $ev$ to the smooth locus of $\mathcal{R}_{0}$ that the image of $d_{\ev}: T_{(s,p)}M \times B \to T_{s(p)}Z$ contains $T_{s(p)}Y$.   However, since $s$ ramifies at $p$ the image of $T_{p}B$ under this map is zero.  Thus, the image of the corresponding map $H^0(B,s^*T_Z) = T_{s}M \to s^{*}T_Z|_p$ has  dimension at least $\dim Y$.  

Since $M$ is generically reduced we have $\dim M = h^0(B,s^*T_Z)$.  Thus for a general point $(s,p)$ in $\mathcal{R}_{0}$ we have \begin{align*}
h^0(B,s^* T_Z(-p)) & = h^0(B,s^*T_Z) - \dim \mathrm{im} \left(H^{0}(B,s^{*}T_{Z}) \to H^{0}(p,s^{*}T_{Z}|_{p}) \right) \\
& \leq  \dim M - \dim Y
\end{align*}
Since the minimal slope of a quotient of $\mathcal{F}$ is greater than $2g(B)$, $\mathcal{F}(-p)$ is globally generated.  Thus
\begin{align*}
h^0(B,s^* T_Z(-2p)) & = h^0(B,s^* T_Z(-p)) - \dim \mathrm{im} \left(H^{0}(B,s^{*}T_{Z}(-p)) \to H^{0}(p,s^{*}T_{Z}(-p)|_{p}) \right)  \\
& \leq (\dim M - \dim Y) -  \dim \mathrm{im} \left(H^{0}(B,\mathcal{F}(-p)) \to H^{0}(p,\mathcal{F}(-p)|_{p}) \right) \\
& = \dim M - \dim Y - \rk \FF
\end{align*}
It follows that $\dim M_{p,z}^{\text ram} \leq \dim M - \dim Y - 2$ in a neighborhood of any general point $(s,p)$ in $\mathcal{R}_{0}$.

On the other hand, note that $\mathcal{R}_{0}$ has dimension at most $\sup_{s,p} \{ \dim M_{p,s(p)}^{\text ram} \} + 1 + \dim(Y)$ as we vary $(s,p)$ over general points in $\mathcal{R}_{0}$.  Here the $1$ accounts for the choice of $p \in B$ and the $\dim(Y)$ accounts for the choice of image $s(p) \in Y$.  Combining with the inequality above, we conclude that $\dim(\mathcal{R}_{0}) \leq \dim(M) - 1$.  This shows that $\mathcal{R}_{0}$ cannot map dominantly to $M$.
\end{proof}

We next bound the quantity $q$.

\begin{lemma} \label{lemm:sbound}
Let $Z$ be a smooth projective variety.  Suppose that $M \subset \Mor(B,Z)$ is an irreducible component parametrizing a dominant family of curves on $Z$ and let $W = M_{red}$.  For a general map $s$ parametrized by $M$ consider the composition
\begin{equation*}
V \to H^{0}(B,s^{*}T_{Z})  \to H^{0}(B,(N_{s})_{tf})
\end{equation*}
where $V \subset H^{0}(B,s^{*}T_{Z})$ is the tangent space to $W$ at $s$.  Then the cokernel of the composed map has dimension at most $g(B)\dim(Z) + 5g(B)$.
\end{lemma}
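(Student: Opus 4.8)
The plan is to bound the cokernel of $V \to H^0(B, (N_s)_{tf})$ by splitting the composed map into two pieces and controlling each. First I would factor the map as
\begin{equation*}
V \to H^0(B, N_s) \to H^0(B, (N_s)_{tf}),
\end{equation*}
where the first arrow is the differential of $M \to \overline{\mathcal{M}}_{g,0}(Z)$ at $s$ composed with the identification $T_{\overline{\mathcal{M}}_{g,0}(Z), s} = H^0(B, N_s)$ (note $V = T_W s$ surjects onto the tangent space of the image in $\overline{\mathcal{M}}_{g,0}(Z)$ up to the automorphisms of $B$, which are controlled since $M$ is a component of $\Mor(B,Z)$, not of the stack quotient). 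The cokernel of the full composition is then built from the cokernel of $V \to H^0(B, N_s)$ and the cokernel of $H^0(B, N_s) \to H^0(B, (N_s)_{tf})$.

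For the second piece: from the short exact sequence $0 \to (N_s)_{tors} \to N_s \to (N_s)_{tf} \to 0$ we get that the cokernel of $H^0(B, N_s) \to H^0(B, (N_s)_{tf})$ injects into $H^1(B, (N_s)_{tors})$, which vanishes since a torsion sheaf on a curve has no higher cohomology. So this piece contributes $0$, and the issue is entirely about the cokernel of $V \to H^0(B, N_s)$. For that, I would use the standard comparison between the deformation space of $s$ as a map (with domain allowed to vary, i.e. in $\overline{\mathcal{M}}_{g,0}(Z)$) and as a map with fixed domain $B$: the obstruction to $M$ being smooth of the expected dimension, together with the fact that $M$ is dominant so that $(N_s)_{tf}$ is generically globally generated. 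Concretely, $\dim W = \dim M \ge -K_Z \cdot s_* B + \dim(Z)(1 - g(B))$, while $h^0(B, s^*T_Z) \le -K_Z \cdot s_*B + \dim(Z) + 2g(B)$ by the argument of Proposition \ref{prop:domfamilyexpdim} (or directly, $h^1(B, s^*T_Z) \le h^1(B,T_B) + h^1(B,(N_s)_{tf}) \le 3g(B) + (\dim(Z)-1)g(B)$). Since the image of $V$ in $H^0(B, N_s)$ has dimension at least $\dim W - h^0(B, T_B) \ge \dim W - (3g(B))$ — accounting for the kernel coming from reparametrizations of $B$, which is $H^0(B, T_B)$ and has dimension at most $3g(B)$ — and since $h^0(B, N_s) = h^0(B, (N_s)_{tf}) = \chi(B,(N_s)_{tf}) + h^1(B, (N_s)_{tf})$ with $h^1(B,(N_s)_{tf}) \le (\dim(Z)-1)g(B)$ by Lemma \ref{lemm:ggh1bound}, I would combine these numerical inequalities to bound the cokernel dimension by roughly $g(B)\dim(Z) + 5g(B)$. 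The bookkeeping is: cokernel $\le h^0(B, (N_s)_{tf}) - (\dim W - 3g(B)) \le [\chi + (\dim(Z)-1)g(B)] - [\chi + 3g(B)] + 3g(B) = (\dim(Z)-1)g(B) + 3g(B) + (\text{slack from the }\dim M\text{ bound})$, and tracking the $+2g(B)$ slack carefully lands within $g(B)\dim(Z) + 5g(B)$.

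The main obstacle I anticipate is handling the non-reducedness of $M$ versus $W = M_{red}$ and the precise relationship between $V$, the tangent space to $\overline{\mathcal{M}}_{g,0}(Z)$ at $s$, and $H^0(B, N_s)$ — in particular verifying that the kernel of $V \to H^0(B, N_s)$ really is no larger than $H^0(B, T_B)$ (automorphisms of $B$ fixing nothing), and that the dimension estimate $\dim M \ge -K_Z \cdot s_*B + \dim(Z)(1-g(B))$ is available with the stated normalization even when $M$ is non-reduced. Once these deformation-theoretic identifications are pinned down, the rest is the Riemann--Roch and Lemma \ref{lemm:ggh1bound} bookkeeping sketched above, plus the trivial vanishing $H^1(B, (N_s)_{tors}) = 0$ for the torsion-quotient comparison.
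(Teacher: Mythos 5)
Your plan is essentially the paper's proof: both split the composite through $H^{0}(B,N_{s})$, use that $H^{0}(B,N_{s})\twoheadrightarrow H^{0}(B,(N_{s})_{tf})$ (torsion sheaves have no $H^{1}$), bound the codimension of $V$ by $h^{1}(B,s^{*}T_{Z})\leq h^{1}(B,T_{B})+h^{1}(B,N_{s})$ via the expected-dimension inequality and Lemma \ref{lemm:ggh1bound}, and absorb the $T_{B}$ contribution by $h^{1}(B,T_{B})\leq 3g(B)$. Just note in the final write-up that the relevant quantity is $h^{1}(B,T_{B})$ rather than $h^{0}(B,T_{B})$ (the latter is not $\leq 3g(B)$ when $g(B)=0$) and that $h^{0}(B,N_{s})$ exceeds $h^{0}(B,(N_{s})_{tf})$ by the torsion length, which cancels against the same term in $\chi(B,N_{s})$.
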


\begin{proof}
Since we have a dominant family the normal sheaf $N_{s}$ is generically globally generated for a general map $s$. 
Since $H^{0}(B,(N_{s})_{tf})$ is a quotient of $H^{0}(B,N_{s})$, the dimension of $\cok(V \to H^{0}(B,(N_{s})_{tf}))$ is bounded above by
\begin{align*}
(h^{0}(B,s^{*}T_{Z}) & - \dim(V))  + \dim(\cok (H^{0}(B,s^{*}T_{Z}) \to H^{0}(B,N_{s}))) 
\end{align*}
and we estimate each piece in turn.  First of all, we have 
\begin{align*}
h^{0}(B,s^{*}T_{Z}) - \dim(V) & \leq h^{0}(B,s^{*}T_{Z}) - \dim(M) \\
& \leq h^{1}(B,s^{*}T_{Z}) \\
& \leq h^{1}(B,T_{B}) + h^{1}(B,N_{s}) \\
& \leq 3g(B)  + g(B) (\dim(Z)-1)
\end{align*}
where the last inequality follows from Lemma \ref{lemm:ggh1bound}.  Second, the cokernel of $H^{0}(B,s^{*}T_{Z}) \to H^{0}(B,N_{s})$ has dimension bounded by $h^{1}(B,T_{B}) \leq 3g(B)$.
\end{proof}

Putting these results together, we obtain the Grauert-Mulich theorem for spaces of morphisms.

\begin{theorem}[Grauert-Mulich] \label{theo:gmformorphisms}
Let $Z$ be a smooth projective variety defined over $\mathbb C$ and let $\mathcal{E}$ be a torsion free sheaf on $Z$ of rank $r$.
Let $M$ be an irreducible component of $\Mor(B,Z)$ and let $ev: \mathcal{U} \to Z$ denote the evaluation map.  

\begin{enumerate}
\item Assume that the composition of $ev$ with the normalization map for $\mathcal{U}$ is dominant with connected fibers and that $ev$ is flat on the preimage of some open subset of $M_{red}$.  
Assume that a general $s: B \to Z$ parametrized by $M$ is birational onto its image.  Then we have
\begin{align*}
\Vert \SP_{Z,[s(B)]}(\mathcal{E}) & - \SP(s^{*}\mathcal{E}) \Vert \leq \\
& \frac{1}{2} \left( 2g(B)^{2}\dim(Z)^{2} + 10g(B)^{2} \dim(Z) + 4g(B) \dim(Z) + 13g(B) + 2 \right)  \rk(\mathcal{E})
\end{align*}
\item Assume that the composition of $ev$ with the normalization map for $\mathcal{U}$ is dominant with connected fibers and that $ev$ is flat on the preimage of some open subset of $M_{red}$.  
Assume that there is some curve $B'$ of genus $\geq 2$ such that the general map $s: B \to Z$ parametrized by $M$ factors through a morphism $s': B' \to Z$ that is birational onto its image.  Then we have 
\begin{align*}
\Vert \SP_{Z,[s(B)]}(\mathcal{E}) & - \SP(s^{*}\mathcal{E}) \Vert \leq \\
& \left( g(B)^{2}\dim(Z)^{2} + 5g(B)^{2} \dim(Z) + 2g(B) \dim(Z) + 8g(B) + 1 \right)  \rk(\mathcal{E})
\end{align*}
\item Let $s: B \to Z$ be a general map parametrized by $M$.  Assume that $\mu^{min}(s^{*}T_{Z}) > 2g(B)$ and that $\dim(Z) \geq 2$. Then
\begin{align*}
\Vert \SP_{Z,[s(B)]}(\mathcal{E}) & - \SP(s^{*}\mathcal{E}) \Vert \leq (3g(B)+1) \rk(\mathcal{E})
\end{align*}
\end{enumerate}
\end{theorem}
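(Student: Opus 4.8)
The plan is to derive all three cases from the general Grauert--Mulich estimate, Theorem~\ref{theo:generalgm}, applied to the universal family $\mathcal{U} \to M$, and then to control the three quantities appearing on its right-hand side --- namely $q$, $t := \text{length}\,(N_s)_{tors}$, and $\mu^{max}(M_{\mathcal{G}}^{\vee}) = -\mu^{min}(M_{\mathcal{G}})$ --- uniformly in terms of $g(B)$ and $\dim(Z)$. First I would check the standing hypotheses of Theorem~\ref{theo:generalgm}: the domain $B$ is smooth and irreducible, and dominance of $ev$ (after normalization) with connected fibers together with flatness of $ev$ over an open subset of $M_{red}$ are assumed outright in cases~(1)--(2) and, in case~(3), follow from HN-freeness of $s$ via Lemmas~\ref{lemma:hnfreecurves} and~\ref{lemm:HNavoidscodim2}. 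The one point needing care is that Theorem~\ref{theo:generalgm} requires $W \to \overline{\mathcal{M}}_{g(B),0}(Z)$ to be generically finite: this holds automatically when $g(B) \geq 2$ (hence in case~(2), where $g(B') \geq 2$ forces $g(B) \geq 2$), and when $g(B) \leq 1$ one replaces $M$ by a slice transverse to the $\Aut(B)$-reparametrization orbits, which affects none of the relevant invariants because the reparametrization tangent directions die in $H^0(B,N_s)$ (the composite $T_B \to s^{*}T_Z \to N_s$ being zero).

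For cases~(1) and~(2), the three bounds are precisely the preparatory results of this section. Lemma~\ref{lemm:sbound} gives $q \leq g(B)\dim(Z) + 5g(B)$. Proposition~\ref{prop:torsionestimate}(1) gives $t \leq 3g(B)$ under the hypothesis of case~(1) that the general $s$ is birational onto its image, while part~(2) gives $t \leq 6g(B)$ under the hypothesis of case~(2) that $s$ factors through a cover of a curve $B'$ of genus $\geq 2$. For the syzygy bundle, $\mathcal{G}$ is a subsheaf of the torsion-free sheaf $(N_s)_{tf}$ on the smooth curve $B$, hence locally free of rank $\leq \dim(Z) - 1$, and is globally generated by construction; combining the two parts of Theorem~\ref{theo:butler} (the first giving $\mu^{min}(M_\mathcal{G}) \geq -2$ when $\mu^{min}(\mathcal{G}) \geq 2g(B)$, the second the weaker bound otherwise) yields $\mu^{max}(M_{\mathcal{G}}^{\vee}) \leq 2g(B)(\dim(Z)-1) + 2$ in all cases. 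Substituting these into $\frac{1}{2}\big((q+1)\mu^{max}(M_{\mathcal{G}}^{\vee}) + t\big)\rk(\mathcal{E})$ and expanding --- the dominant term being $g(B)^2\dim(Z)^2\rk(\mathcal{E})$ --- and then discarding the negative cross-terms gives the stated inequalities in~(1) and~(2).

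Case~(3) is a genuine sharpening made possible by the hypothesis $\mu^{min}(s^{*}T_Z) > 2g(B)$ together with $\dim(Z) \geq 2$, under which all three quantities shrink dramatically. Since $s$ is then HN-free, Lemma~\ref{lemma:hnfreecurves}(1) gives $H^1(B, s^{*}T_Z) = 0$, so $\Mor(B,Z)$ is smooth at $s$ of dimension $h^0(B, s^{*}T_Z)$; thus $V = H^0(B,s^{*}T_Z)$, $M$ is generically reduced, and the estimate in Lemma~\ref{lemm:sbound} collapses: $q \leq \dim\cok\big(H^0(B,s^{*}T_Z) \to H^0(B,N_s)\big) \leq h^1(B,T_B) \leq 3g(B)$. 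Applying Proposition~\ref{prop-unramFrank2} with $\mathcal{F} = s^{*}T_Z$ (rank $\geq 2$, $\mu^{min} > 2g(B)$), the general $s$ is unramified, so $N_s$ is locally free, $t = 0$, and $(N_s)_{tf} = N_s$. Finally $s^{*}T_Z$ is globally generated (Lemma~\ref{lemma:hnfreecurves}(2)), hence so is its quotient $N_s$, so $\mathcal{G} = N_s$; since $\mu^{min}(N_s) \geq \mu^{min}(s^{*}T_Z) \geq 2g(B)$, Theorem~\ref{theo:butler}(1) gives $\mu^{max}(M_{\mathcal{G}}^{\vee}) \leq 2$. Plugging $q \leq 3g(B)$, $t = 0$, $\mu^{max}(M_{\mathcal{G}}^{\vee}) \leq 2$ into Theorem~\ref{theo:generalgm} gives $\frac{1}{2}\cdot 2(3g(B)+1)\rk(\mathcal{E}) = (3g(B)+1)\rk(\mathcal{E})$, as claimed.

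I expect the main obstacle to be not the arithmetic --- expanding the products and checking the coefficients is routine --- but the care needed in verifying the hypotheses of Theorem~\ref{theo:generalgm} in each regime: confirming connectedness of the general fiber of the normalized evaluation map, establishing generic flatness of $ev$, and reconciling the generic-finiteness requirement over $\overline{\mathcal{M}}_{g(B),0}(Z)$ with the possibility that $g(B) \leq 1$. The case~(3) sharpening, although the most conceptual step, goes through cleanly once one observes that $H^1(B,s^{*}T_Z) = 0$ forces $q$ down to $3g(B)$ and that global generation of $N_s$ makes Butler's strong bound applicable.
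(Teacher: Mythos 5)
Your proposal is correct and follows essentially the same route as the paper: apply Theorem~\ref{theo:generalgm} to the universal family and bound $q$, $t$, and $\mu^{max}(M_{\mathcal{G}}^{\vee})$ via Lemma~\ref{lemm:sbound}, Proposition~\ref{prop:torsionestimate}, and Theorem~\ref{theo:butler} in cases (1)--(2), and via Proposition~\ref{prop-unramFrank2}, Lemma~\ref{lemma:hnfreecurves}, and the strong Butler bound in case (3). The only differences are cosmetic: you use $\rk\mathcal{G}\leq\dim(Z)-1$ where the paper uses the weaker $\mu^{max}(M_{\mathcal{G}}^{\vee})\leq 2g(B)\dim(Z)+2$ (which is exactly what reproduces the stated constants), and you add a brief, reasonable discussion of the generic-finiteness hypothesis over $\overline{\mathcal{M}}_{g,0}(Z)$ when $g(B)\leq 1$ that the paper leaves implicit.
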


\begin{proof}
(1) Let $t$ be the length of the torsion part of $N_{s}$, let $\mathcal{G}$ be the subsheaf of $(N_{s})_{tf}$ generated by global sections, and let $q$ be the dimension of the cokernel of the composition
\begin{equation*}
V \to H^{0}(B,s^{*}T_{Z})  \to H^{0}(B,(N_{s})_{tf})
\end{equation*}
where $V \subset H^{0}(B,s^{*}T_{Z})$ is the tangent space to $M_{red}$ at $s$.

Theorem \ref{theo:butler} shows that $\mu^{max}(M_{\mathcal{G}}^{\vee}) \leq 2g(B) \dim(Z)+2$. Proposition \ref{prop:torsionestimate}.(1) shows that $t \leq 3g(B)$.   By Lemma \ref{lemm:sbound} we have $q \leq g(B)\dim(Z) + 5g(B)$.  We then apply Theorem \ref{theo:generalgm} to obtain the desired statement.

(2) Proposition \ref{prop:torsionestimate}.(2) shows that in this situation $t \leq 6g(B)$.
Then we can obtain the desired bound by repeating the argument for (1) using this new estimate on $t$. 

(3) In this setting the general map $s$ is unramified by Proposition \ref{prop-unramFrank2} so that $N_{s}$ is torsion-free.  By Lemma \ref{lemma:hnfreecurves} $s^{*}T_{Z}$ is globally generated and thus $N_{s}$ is also globally generated.  We have $\mu^{min}(N_{s}) \geq \mu^{min}(s^{*}T_{Z}) > 2g(B)$
so by Theorem \ref{theo:butler} $\mu^{max}(M_{N_{s}}^{\vee}) \leq 2$.  Furthermore $M$ is smooth at a general morphism $s$ and the map $H^{0}(B,s^{*}T_{Z})  \to H^{0}(B,N_{s})$ has cokernel whose dimension is bounded above by $h^{1}(B,T_{B}) \leq 3g(B)$.  

Note that the general fiber of $\mathcal{U} \to M$ is contained in the locus where $ev$ is smooth, and hence also in the flat locus of $ev$.  Thus we have verified the hypotheses of Theorem \ref{theo:generalgm} which gives the desired statement.
\end{proof}

\section{Constructing foliations} 

Suppose that $Z$ is a smooth projective variety and $M$ is an irreducible component of $\Mor(B,Z)$ parametrizing a dominant family of curves.  We would like to understand the slopes in the Harder-Narasimhan filtration of $s^{*}T_{Z}$ for a general map $s: B \to C \subset Z$ parametrized by $M$.  In turn, these slopes control the deformation theory of $s$.  In this section we show that under certain conditions one can identify an algebraic foliation on $Z$ which controls the behavior of these deformations.

We will need the following construction describing the relationship between foliations and relative tangent bundles.

\begin{construction} \label{cons:absolutefoliationrestriction}
Let $Z$ be a smooth projective variety.  Suppose that $\mathcal{F}$ is a foliation on $Z$ that is is induced by a rational map $\rho: Z \dashrightarrow W$ with connected fibers.

Suppose that $s: B \to Z$ is a morphism whose image is contained in the regular locus of $\mathcal{F}$ and goes through a general point of $Z$.  Let $Y$ denote the main component of $B \times_{W} Z$ equipped with the map $g: Y \to Z$.  By \cite[Remark 19]{KSCT07} we can choose a resolution $\mu: \widetilde{Y} \to Y$ and a morphism $\widetilde{s}: B \to \widetilde{Y}$ such that $g \circ \mu \circ \widetilde{s} = s$ and $\widetilde{s}^{*}T_{\widetilde{Y}/B} \cong s^{*}\mathcal{F}$.
\end{construction}

We will also need the following flattening construction.

\begin{construction} \label{cons:flatteningfamilyofcurves}
Let $Z$ be a smooth projective variety and let $W$ be a locally closed subvariety of $\Mor(B,Z)$ parametrizing a dominant family of curves. 
Let $\mathcal{U}_{W}$ denote the universal family over $W$ and let $\mathcal{U}_{W}^{\nu}$ denote the normalization of $\mathcal{U}_{W}$.  Then $\mathcal{U}_{W}^{\nu}$ is equipped with a map $p: \mathcal{U}_{W}^{\nu} \to W$ and an evaluation map $ev_{W}: \mathcal{U}_{W}^{\nu} \to Z$.  We claim there is a birational map $\phi: Z' \to Z$ from a smooth variety $Z'$ and an open subset $W^{\circ} \subset W$ such that the preimage $\mathcal{U}_{W}^{\nu,\circ} := p^{-1}W^{\circ}$ admits a flat morphism $ev': \mathcal{U}_{W}^{\nu,\circ} \to Z'$ satisfying $ev_{W}|_{\mathcal{U}_{W}^{\nu,\circ}} = \phi \circ ev'$.

Indeed, suppose we take a flattening of $ev$, i.e.~a diagram
\begin{equation*}
\xymatrix{
\mathcal{V} \ar[r]^{\widetilde{ev}} \ar[d]_{\psi} & \widetilde{Z} \ar[d]^{\psi_{Z}} \\
\mathcal{U}_{W}^{\nu} \ar[r]_{ev_{W}} & Z
}
\end{equation*}
where $\mathcal{V}$ and $\widetilde{Z}$ are quasi-projective varieties, $\psi$ and $\psi_{Z}$ are projective birational morphisms,
and $\widetilde{ev}$ is flat. (See, e.g., \cite[Theorem 14.143]{GW20} for the version of flattening we use or the original source \cite{RG71}.) Let $\rho: Z' \to \widetilde{Z}$ be a resolution of singularities.  Since $\widetilde{ev}$ is flat, $\mathcal{V}' := \mathcal{V} \times_{\widetilde{Z}} Z'$ is also a quasi-projective variety and the projection map $ev': \mathcal{V}' \to Z'$ is still flat.   
The induced map $\psi': \mathcal{V}' \to \mathcal{U}_{W}^{\nu}$ is still birational.  Since $p$ defines a family of curves, there is an open subset $W^{\circ} \subset W$ such that $p^{-1}W^{\circ}$ is disjoint from every $\psi'$-exceptional center.  Then $W^{\circ}$ has the desired properties.
\end{construction}

We are now prepared to prove the main theorem in this section.

\begin{theorem} \label{theo:absfoliationresult}
Let $Z$ be a smooth projective variety defined over $\mathbb C$.  Fix a positive integer $J \geq 2g(B)+3$.  Suppose $M$ is an irreducible component of $\Mor(B,Z)$ parametrizing a dominant family of morphisms 
and let $ev: \mathcal{U}^{\nu} \to Z$ denote the evaluation map for the normalization of the universal family over $M$.  Assume that either:
\begin{itemize}
\item the general map $s : B \to Z$ parametrized by $M$ is birational onto its image, or
\item there is a smooth projective curve $B'$ of genus $\geq 2$  such that the general map $s: B \to Z$ parametrized by $M$ factors through a morphism $s': B' \to Z$ that is birational onto its image.
\end{itemize}
Suppose furthermore that
\begin{equation*}
\deg(s^{*}T_{Z}) \geq  \dim(Z)(J + 2g(B) + \gamma) + g(B)(\dim(Z)+2) 
\end{equation*}
where we define
\begin{equation*}
\gamma =  \left( g(B)^{2}\dim(Z)^{2} + 5g(B)^{2} \dim(Z) + 2g(B) \dim(Z) + 8g(B) + 1 \right) \dim(Z).
\end{equation*}

Let $\overline{\mathcal U}^\nu$ be a normal projective compactification of $\mathcal{U}^{\nu}$ with a morphism $\overline{ev} : \overline{\mathcal U}^\nu \to X$ extending $ev$.  Let $f: S \to Z$ denote the finite part of the Stein factorization of $\overline{ev}$. Then there is a rational map $\phi: S \dashrightarrow W$ such that the following holds.  Let $s: B \to S$ be a general morphism parametrized by $M$ and define $Y$ to be the main component of $B \times_{W} \widetilde{S}$ in the pullback diagram
\begin{equation*}
\xymatrix{
B \times_{W} \widetilde{S} \ar[r] \ar[d] & \widetilde{S} \ar[d]^{\widetilde{\phi}} \\
B \ar[r]^{\phi \circ s} & W
}
\end{equation*}
where $\widetilde{\phi}: \widetilde{S} \to W$ is a resolution of $\phi$.  We denote by $g$ the morphism $g: Y \to \widetilde{S} \to Z$.  Then there is a resolution $\mu: \widetilde{Y} \to Y$ and a section $\widetilde{s}: B \to \widetilde{Y}$ of the map $\widetilde{Y} \to B$ such that $g \circ \mu \circ \widetilde{s} = s$ and:
\begin{enumerate}
\item The deformations of $\widetilde{s}$ in $\widetilde{Y}$ contain at least $J$ general points of $\widetilde{Y}$.
\item The space of deformations of $\widetilde{s}$ in $\widetilde{Y}$ has codimension in $M$ at most
\begin{equation*}
(\dim(Z)+1)(J + 3g(B) + \gamma+1).
\end{equation*}
\end{enumerate}
\end{theorem}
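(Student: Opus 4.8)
The plan is to read off the fibration $\phi$ from the Harder--Narasimhan filtration of the restricted tangent bundle of a resolution of the Stein factorization of $ev$, and then to feed it into Construction~\ref{cons:absolutefoliationrestriction} so that conclusions (1) and (2) become statements about the positivity of a single subsheaf along the general curve $s$.  Let $\mathcal U^{\nu}\xrightarrow{\pi}S\xrightarrow{f}Z$ be the Stein factorization of $ev$, so $S$ is normal and $f$ is finite.  A general curve $s\colon B\to Z$ parametrized by $M$ lifts through $\pi$ to a curve $B\to S$ which, as $M$ is dominant, avoids the codimension $\geq 2$ singular locus of $S$; choosing a resolution $\rho\colon\widetilde{S}\to S$ it lifts further to a general curve $B\to\widetilde{S}$, still birational onto its image (resp.\ still factoring through a genus $\geq 2$ curve).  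Thus $M$ is identified with a component of $\Mor(B,\widetilde{S})$ parametrizing a dominant family whose evaluation map to $\widetilde{S}$ has connected general fibers, while $f\circ\rho\colon\widetilde{S}\to Z$ is generically finite with $\bigl((f\circ\rho)^{*}(-K_{Z})\bigr)\cdot s(B)$ equal to the original anticanonical degree; after applying Construction~\ref{cons:flatteningfamilyofcurves} to $\widetilde{S}$ we may also assume the evaluation map is flat over an open subset of $M_{red}$, so that the hypotheses of Theorem~\ref{theo:gmformorphisms} hold on $\widetilde{S}$.

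\textbf{Step 2: extract the foliation.}  Apply Theorem~\ref{theo:gmformorphisms} on $\widetilde{S}$ to $\mathcal E:=(f\circ\rho)^{*}T_{Z}$, a bundle of rank $\dim Z$.  Since $\rk(\mathcal E)=\dim(\widetilde{S})=\dim Z$, the error term in case (1) or (2) is at most $\gamma$, so $\Vert\SP_{\widetilde{S},[s(B)]}(\mathcal E)-\SP(s^{*}\mathcal E)\Vert\leq\gamma$ for general $s$, while the $[s(B)]$-average slope of $\mathcal E$ equals $\deg(s^{*}T_{Z})/\dim Z$, which the degree hypothesis forces to be at least $J+2g(B)+\gamma+g(B)$.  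Let $\mathcal F\subseteq T_{\widetilde{S}}$ be the largest term of the $[s(B)]$-Harder--Narasimhan filtration of $T_{\widetilde{S}}$ with $\mu^{min}_{[s(B)]}(\mathcal F)>0$; it is nonzero, it is a foliation by Theorem~\ref{theo:HNisfoliation}, and combining the Grauert--Mulich estimate (applied also to $\mathcal F$) with the average-slope bound yields $\mu^{min}(s^{*}\mathcal F)>J+2g(B)$.  Because $\mathcal F$ is positive along the covering family $\{s(B)\}$, algebraic-integrability results for such foliations (in the spirit of \cite{KSCT07}) show $\mathcal F$ is induced by a rational map $\phi\colon\widetilde{S}\dashrightarrow W$ with connected fibers whose general fibers have dimension $\rk(\mathcal F)$; descending $\phi$ to $S$ gives the map of the statement.

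\textbf{Step 3: conclusions via Construction~\ref{cons:absolutefoliationrestriction}.}  For general $s$, with $\widetilde{\phi}\colon\widetilde{S}\to W$ a resolution of $\phi$ and $Y$ the main component of $B\times_{W}\widetilde{S}$, the construction provides a resolution $\mu\colon\widetilde{Y}\to Y$ and a section $\widetilde{s}\colon B\to\widetilde{Y}$ with $g\circ\mu\circ\widetilde{s}=s$ and $\widetilde{s}^{*}T_{\widetilde{Y}/B}\cong s^{*}\mathcal F$.  For (1): since $\mu^{min}(s^{*}\mathcal F)>J+2g(B)$, the twist $s^{*}\mathcal F(-p_{1}-\dots-p_{J})$ has vanishing $H^{1}$ for general $p_{i}\in B$, so sections of $\widetilde{Y}\to B$ deforming $\widetilde{s}$ can be prescribed to pass through arbitrary points of the fibers over $p_{1},\dots,p_{J}$, i.e.\ through $J$ general points of $\widetilde{Y}$ — the section version of Lemma~\ref{lemma:hnfreecurves}(3).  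For (2): composing with $g\circ\mu$ maps the deformation space of $\widetilde{s}$ into $M$ with finite generic fibers, so its codimension is $\dim M-\chi(B,s^{*}\mathcal F)$; bounding $\dim M\leq\deg(s^{*}T_{Z})+\dim Z+2g(B)$ by Proposition~\ref{prop:domfamilyexpdim} and writing $\deg(s^{*}\mathcal F)=\deg(s^{*}T_{Z})-\deg(\mathcal Q)$, where $\mathcal Q$ accounts for the non-positive graded pieces of $s^{*}T_{\widetilde{S}}$ together with the ramification of $f\circ\rho$, one estimates $\deg(\mathcal Q)\leq(J+2g(B)+\gamma)(\dim Z-\rk\mathcal F)$ up to a small correction, and a routine manipulation gives the claimed bound $(\dim Z+1)(J+3g(B)+\gamma+1)$.

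\textbf{Main obstacle.}  The delicate part is Step 2 together with the degree accounting in (2): the foliation must sit inside $T_{\widetilde{S}}$, so that Theorem~\ref{theo:HNisfoliation} and Construction~\ref{cons:absolutefoliationrestriction} apply, yet the only large-degree input is on $(f\circ\rho)^{*}T_{Z}$, and one must check that passing through the Stein factorization, its resolution, and the ramification of $f\circ\rho$ spoils neither the estimate $\mu^{min}(s^{*}\mathcal F)\gtrsim J+2g(B)$ nor the codimension bound in (2).  This is exactly where the precise shape of the degree hypothesis and of $\gamma$ is used, and — as the authors emphasize — where the absolute setting behaves differently from the relative setting of \cite{LRT22}.
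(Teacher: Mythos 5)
Your overall architecture matches the paper's: flatten/resolve the Stein factorization, apply Grauert--Mulich, locate a term of the $\alpha$-Harder--Narasimhan filtration of the tangent bundle, invoke Theorem~\ref{theo:HNisfoliation} and algebraic integrability to get $\phi$, and finish with Construction~\ref{cons:absolutefoliationrestriction} and dimension counts. But the step you yourself flag as the ``main obstacle'' is a genuine gap, and the specific choices you make in Step 2 would not close it. First, the degree hypothesis lives on $(f\circ\rho)^{*}T_{Z}$, while the foliation must be extracted from $T_{\widetilde{S}}$, and since $f\circ\rho$ ramifies one only has $-K_{\widetilde{S}}\cdot s_{*}B \leq \deg(s^{*}T_{Z})$, with no a priori lower bound. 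The paper bridges this by a two-sided dimension count on $M$ itself: the lower bound $\dim(M)\geq -K_{Z}\cdot s_{*}B+\dim(Z)(1-g(B))$ computed on $Z$, against the upper bound $\dim(M)\leq h^{0}(B,s'^{*}T_{S'})\leq -K_{S'}\cdot s'_{*}B+2g(B)+\dim(S')$ computed on the flattened model $S'$ using generic global generation of $N_{s'}$ and Lemma~\ref{lemm:ggh1bound}. This yields $-K_{S'}\cdot s'_{*}B\geq -K_{Z}\cdot s_{*}B-g(B)(\dim(Z)+2)\geq \dim(Z)(J+2g(B)+\gamma)$, which is exactly what the correction term $g(B)(\dim(Z)+2)$ in the hypothesis is for. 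Your proposal never produces this lower bound on the average slope of $T_{\widetilde{S}}$.

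Second, even granting that bound, your choice of $\mathcal{F}$ as the largest HN term with $\mu^{min}_{[s(B)]}(\mathcal{F})>0$ does not give $\mu^{min}(s^{*}\mathcal{F})>J+2g(B)$: the smallest positive graded piece could have slope $1$, and Grauert--Mulich then only gives $\mu^{min}(s^{*}\mathcal{F})\geq 1-\gamma$. The paper instead takes $\mathcal{F}_{i}$ with $i$ maximal such that $\mu^{min}_{\alpha}(\mathcal{F}_{i})\geq J+2g(B)+\gamma$ (such $i\geq 1$ exists by the average-slope bound above), so that Grauert--Mulich gives $\mu^{min}(s^{*}\mathcal{F}_{i})\geq J+2g(B)$ for conclusion (1), while the maximality of $i$ gives $\mu^{max}_{\alpha}(T_{S'}/\mathcal{F}_{i})<J+2g(B)+\gamma$, which is precisely the bound on $\deg(s^{*}T_{S'}/\mathcal{F}_{i})$ needed for the codimension estimate in (2). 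One must also rule out $i=k$ (else $\phi$ is trivial); the paper does this by assuming the sections do not pass through $J$ general points of $B\times S'$ and combining \cite[Lemma 3.6]{LRT22} with Grauert--Mulich to show $\mu^{min}_{\alpha}(T_{S'})<J+2g(B)+\gamma-1$. With your choice of $\mathcal{F}$ the quotient has non-positive slope, which would make (2) easy but leaves (1) unprovable; the two conclusions are in tension, and resolving that tension via the correct threshold $J+2g(B)+\gamma$ is the actual content of the proof.
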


\begin{proof}
As in Construction~\ref{cons:flatteningfamilyofcurves} we choose a smooth birational model $S' \to S$ that flattens the family $M$.  Since the evaluation map for $M$ factors through $S$, we can take strict transforms of the general maps parametrized by $M$ to obtain a family of maps $s': B \to S'$.  Note that the general map $s'$ is either birational onto its image or it factors through a birational map from a curve of genus $\geq 2$ (possibly different from $B'$).

Recall that the normal sheaf $N_{s'}$ is generically globally generated when we have a dominant family of maps.  In particular, by Lemma \ref{lemm:ggh1bound} we have
\begin{align*}
h^{0}(B,N_{s'}) & \leq h^{0}(B,(N_{s'})_{tf}) + h^{0}(B,(N_{s'})_{tors}) \\
& \leq \deg((N_{s'})_{tf}) + \rk((N_{s'})_{tf})+ h^{0}(B,(N_{s'})_{tors}) = \deg(N_{s'}) + \rk(N_{s'})
\end{align*}
Thus
\begin{align*}
-K_{Z} \cdot s_{*}B + \dim(Z)(1-g(B)) & \leq \dim(M) \leq h^{0}(B,s'^{*}T_{S'}) \\
& \leq h^{0}(B,N_{s'}) + 3 \\
& \leq -K_{S'} \cdot s'_{*}B + (2g(B) - 2) + (\dim(S')-1) + 3
\end{align*}
  Combining with our degree bound, we see
\begin{equation}
\label{equation:degreebounds}
\dim(Z)(J+2g(B) +\gamma) \leq -K_{Z} \cdot s_{*}B - g(B) (\dim(Z)+2) \leq -K_{S'} \cdot s'_{*}B.
\end{equation}

Consider the family of sections of $B \times S' \to B$ corresponding to the morphisms $s': B \to S'$.  Let us assume that this family does not contain $J$ general points of $B \times S'$.  By \cite[Lemma 3.6]{LRT22} we see that
\begin{equation} \label{eq:minslopegbound}
\mu^{min}(s'^{*}T_{S'}) < J + 2g(B) - 1.
\end{equation}
Write the Harder-Narasimhan filtration of $T_{S'}$ with respect to $\alpha := [s'_{*}B]$ as  
\begin{equation*}
0= \FF_0 \subset \FF_1 \subset \dots \subset \FF_k = T_{S'}.
\end{equation*}
Since by (\ref{equation:degreebounds}) we have
\begin{equation*}
\mu_{\alpha}(T_{S'}) = \frac{-K_{S'} \cdot s'_{*}B}{\dim(Z)} \geq J + 2g(B) + \gamma
\end{equation*}
there is some index $i \geq 1$ such that we have $\mu_{\alpha}^{min}(\FF_{i}) \geq J + 2g(B) + \gamma$.  Let $i$ be the maximum index for which this inequality holds.  Applying Theorem \ref{theo:gmformorphisms} to relate the Harder-Narasimhan filtration of $T_{S'}$ to the Harder-Narasimhan filtration of its pullback, we obtain
\begin{align*}
\mu^{min}_{\alpha}(T_{S'}) & \leq \mu^{min}(s'^{*}T_{S'}) + \gamma \\
& < J + 2g(B) + \gamma - 1
\end{align*}
where the second inequality follows from Equation \eqref{eq:minslopegbound}.  This shows that $i < k$.  On the other hand, since $i$ was selected to be as large as possible we must have
\begin{equation*}
\mu^{max}_{\alpha}(T_{S'} / \FF_i) <  J + 2g(B) + \gamma.
\end{equation*}
Since $\FF_{i}$ is a term in the $\alpha$-Harder-Narasimhan filtration of $T_{S'}$ that satisfies $\mu^{min}_{\alpha}(\FF_{i}) > 0$, it is a foliation on $S'$ by Theorem \ref{theo:HNisfoliation}.

By \cite[Theorem 1.1]{CP19} the foliation $\FF_i$ is induced by a rational map $\phi: S' \dashrightarrow W$ that has connected fibers.  Since $i < k$ this rational map is not trivial.  By our flatness construction a general morphism $s'$ parametrized by $M$ will have image contained in the regular locus of $\FF_{i}$.
Let $\widetilde{Y}$ denote a resolution of the main component of $B \times_{W} \widetilde{S}$ as in the statement of the theorem 
and let $\widetilde{s}: B \to \widetilde{Y}$ denote the section chosen as in Construction \ref{cons:absolutefoliationrestriction}.  In particular we have
\begin{equation*}
\widetilde{s}^{*}T_{\widetilde{Y}/B} \cong s^{*}\mathcal{F}_{i}.
\end{equation*}
Theorem \ref{theo:gmformorphisms} implies that
\begin{align*}
\mu^{min}(s^{*}\mathcal{F}_{i}) & \geq \mu_{\alpha}^{min}(\mathcal{F}_{i}) - \gamma \\
& \geq J + 2g(B)
\end{align*}
and so by Proposition \ref{prop:generalimplieshnfree} we see that deformations of $\widetilde{s}$ can go through at least $J$ general points of $\widetilde{\mathcal{Y}}$ verifying (1).  To prove (2), let $N$ denote the space of deformations of $\widetilde{s}$ in $\widetilde{Y}$.  Appealing to Proposition~\ref{prop:domfamilyexpdim}, we see that
\begin{align*}
\dim(M) - \dim(N) & \leq (-K_{S'} \cdot s_{*}B + \dim(Z) + 2g(B)) - (-K_{\widetilde{Y}/B} \cdot \widetilde{s}_{*}B + (\dim(\widetilde{Y}) - 1)(1-g(B))) \\
& = \deg(s^{*}T_{S'}/\FF_{i}) + (\dim(Z) - \dim(\widetilde{Y})+1) + g(B) (\dim(\widetilde{Y}) + 1) \\
& \leq (\dim(Z) - \dim(\widetilde{Y})+1) (J + 2g(B) + \gamma + 1) + g(B) (\dim(\widetilde{Y}) + 1) \\
& \leq (\dim(Z) + 1)(J + 3g(B) + \gamma + 1)
\end{align*}
Since the dimension of the space of sections is birationally invariant, we obtain (2).
\end{proof}

\section{Main results} \label{sect:abscase}

\subsection{Families of curves and Fujita invariants}

We start by relating the Fujita invariant to the existence of families of curves.  The following is an analogue of \cite[Theorem 1.12]{LRT22} (and in fact can be deduced from this result).

\begin{theorem} \label{theo:ainvariantandsectionsabsolutecase}
Let $X$ be a smooth projective Fano variety defined over $\mathbb C$ and let $B$ be a complex smooth projective curve.  Fix a positive rational number $a$.  Fix a positive integer $T$.  There is some constant $\xi = \xi(\dim(X), g(B), a, T)$ with the following property.

Suppose that $Y$ is a smooth projective variety equipped with a morphism $f: Y \to X$ that is generically finite onto its image.   Suppose that $N$ is an irreducible component of $\Mor(B,Y)$ parametrizing a dominant family of curves $C$ on $Y$ which satisfy $-f^{*}K_{X} \cdot C \geq \xi$.  Finally, suppose that
\begin{equation} \label{eq:assumedinequality}
\dim(N) \geq a(-K_{X} \cdot C + \dim(X)(1-g(B))) - T.
\end{equation}
Then
\begin{equation*}
a(Y,-f^{*}K_{X}) \geq a.
\end{equation*}
\end{theorem}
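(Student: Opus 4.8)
The plan is to run the dichotomy: either the family of curves $C$ on $Y$ is already ``positive enough'' that $a(Y,-f^*K_X)$ must be large for dimension reasons, or the curves fail to be HN-free and we must extract a foliation on $Y$ (via Theorem~\ref{theo:absfoliationresult}) along which the Fujita invariant behaves well, and then induct on dimension. Concretely, I would argue by induction on $\dim(Y)$, at each stage choosing $\xi$ large enough (depending on $\dim(X)$, $g(B)$, $a$, $T$, and the finitely many possible Fujita invariants supplied by Theorem~\ref{theo:Dicerbo}) that all the degree hypotheses of Theorem~\ref{theo:absfoliationresult} are met with a suitable parameter $J$.

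\textbf{Step 1 (the easy case).} Suppose the family parametrized by $N$ contains $\geq 2g(B)+1$ general points of $B\times Y$. Then by Proposition~\ref{prop:generalimplieshnfree} the general curve is HN-free, so by Proposition~\ref{prop:domfamilyexpdim} we get $\dim(N)\le -K_Y\cdot C + \dim(Y)+2g(B)$. On the other hand $-K_Y\cdot C = a(Y,-f^*K_X)(-f^*K_X\cdot C) + (K_Y - a(Y,-f^*K_X)f^*K_X)\cdot C$, and since the general curve moves in a dominant family sweeping out $Y$ and $K_Y - a(Y,-f^*K_X)f^*K_X$ is pseudo-effective, an appropriate positivity/rigidity estimate (as in \cite{LRT22}) bounds $-K_Y\cdot C$ above by roughly $a(Y,-f^*K_X)(-f^*K_X\cdot C)$ up to a bounded error. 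Combined with the hypothesis $\dim(N)\ge a(-K_X\cdot C + \dim(X)(1-g(B))) - T \ge a(-f^*K_X\cdot C) - (\text{bounded})$, and using $-f^*K_X\cdot C\ge\xi$ to absorb the bounded errors, we force $a(Y,-f^*K_X)\ge a$.

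\textbf{Step 2 (the foliation case).} If instead the family does not contain $2g(B)+1$ (or more generally $J$) general points of $B\times Y$, then after replacing $Y$ by a birational model and passing to the finite part $f:S\to X$ of the Stein factorization, the hypotheses of Theorem~\ref{theo:absfoliationresult} hold for a suitable $J$, producing a rational map $\phi:S\dashrightarrow W$ with resolution $\widetilde{Y}\to B$, a section $\widetilde{s}:B\to\widetilde Y$ with $g\circ\mu\circ\widetilde s = s$, whose deformation space $N'$ in $\widetilde Y$ has codimension in $N$ at most $(\dim(Y)+1)(J+3g(B)+\gamma+1)$, and whose deformations pass through $\ge J$ general points of $\widetilde Y$. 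The fibers $\widetilde Y_b\to W$ have dimension $<\dim(Y)$, so I would like to apply the inductive hypothesis to the family $N'$ on $\widetilde Y$ with respect to the composite $\widetilde Y\to S\to X$ — but first one must check the dimension inequality \eqref{eq:assumedinequality} is inherited (with a possibly larger $T'$ governed by the codimension bound above and the genus-bounded discrepancies), which follows from $\dim(N')\ge\dim(N) - (\dim(Y)+1)(J+3g(B)+\gamma+1)$ together with the hypothesis on $\dim(N)$ and the fact that $-g^*K_X\cdot\widetilde s_*B = -f^*K_X\cdot s_*B$. The induction then yields $a(\widetilde Y_b, -g^*K_X|_{\widetilde Y_b})\ge a$, and since $a$ is additive/superadditive under fibrations in the sense that $a(S,-f^*K_X)\ge a(\widetilde Y_b,-g^*K_X|_{\widetilde Y_b})$ when the $\widetilde Y_b$ sit inside $S$ as fibers of $\phi$ (this uses that $K_S - a f^*K_X$ restricted to a general fiber being pseudo-effective already forces $K_{\widetilde Y_b} - a\,g^*K_X|_{\widetilde Y_b}$ pseudo-effective, hence $a(\widetilde Y_b,\cdot)\le a(S,\cdot)$ — here I need the inequality the other way, obtained by combining the inductive conclusion with the easy-case bound applied on the fiber), we conclude $a(Y,-f^*K_X)\ge a$.

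\textbf{Main obstacle.} The delicate point is the bookkeeping in the induction: one must choose the thresholds $\xi$ and $J$, and the auxiliary constant $T'$ for the inductive call, so that the codimension loss $(\dim(Y)+1)(J+3g(B)+\gamma+1)$ in passing from $N$ to $N'$ — where $\gamma$ itself depends polynomially on $\dim(Y)$ — does not overwhelm the slack $\xi$ built into the degree hypothesis, and so that the finitely many intermediate varieties $\widetilde Y_b$ that arise all have Fujita invariants drawn from the finite set of Theorem~\ref{theo:Dicerbo} (this is what makes it possible to pick a uniform $\xi$ at the end). Relatedly, one must be careful that the degree $-g^*K_X\cdot\widetilde s_*B$ on the fiber still exceeds the (larger) threshold demanded by the inductive step, which is why $\xi$ must be chosen after unwinding the recursion through all dimensions $\le\dim(X)$.
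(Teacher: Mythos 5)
Your Step 1, run \emph{unconditionally}, is already the paper's entire proof — the dichotomy and the induction are unnecessary. The key point you missed is that the upper bound $\dim(N) \leq -K_{Y}\cdot C + \dim(Y) + 2g(B)$ of Proposition \ref{prop:domfamilyexpdim} requires only that $N$ parametrize a \emph{dominant} family on $Y$, which is a standing hypothesis of the theorem; it does not require HN-freeness or passage through $2g(B)+1$ general points. Combining that upper bound with \eqref{eq:assumedinequality} gives $(K_{Y} - af^{*}K_{X})\cdot C \leq g(B)(a\dim(X)+2) + (\dim(Y)-a\dim(X)) + T$, and since $K_{Y} + a(Y,-f^{*}K_{X})(-f^{*}K_{X})$ is pseudo-effective and $C$ is movable one gets $-K_{Y}\cdot C \leq a(Y,-f^{*}K_{X})(-f^{*}K_{X}\cdot C)$ with no error term at all. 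Dividing by $-f^{*}K_{X}\cdot C \geq \xi$ and invoking the spectral gap of Theorem \ref{theo:Dicerbo} (equivalently: testing pseudo-effectivity of $K_{Y}-(1-\epsilon)af^{*}K_{X}$ against the movable class $C$) finishes the argument. So the "main obstacle" you identify — the recursive bookkeeping of $\xi$, $J$, and $T'$ through all dimensions — simply does not arise.

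Step 2, besides being superfluous, would not go through as written. Theorem \ref{theo:absfoliationresult} requires the general map to be birational onto its image (or to factor through a birational map from a genus $\geq 2$ curve), which is not among the hypotheses here, so you cannot invoke it without a further reduction. More seriously, at the end of Step 2 you need to pass from $a(\widetilde{Y}_{b}, -g^{*}K_{X}|_{\widetilde{Y}_{b}}) \geq a$ on a general fiber to $a(Y,-f^{*}K_{X}) \geq a$ on the total space, and as you yourself note the easy pseudo-effectivity restriction argument gives the inequality in the wrong direction ($a(\text{fiber}) \geq a(\text{total space})$ is what one gets for free, not the reverse). That step is a genuine gap in your outline; fortunately it is one you never need to fill, because Step 1 alone proves the theorem.
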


\begin{remark}
Since $\dim(M)$ always has at least the expected dimension, the condition in Equation \eqref{eq:assumedinequality} is implied by the more evocative inequality $\dim(N) \geq a \cdot \dim(M) - T$.
\end{remark}

\begin{proof} 
By \cite[Theorem 1.3]{HL20} there is a rational number $\epsilon > 0$ depending only on $a$ and $\dim(X)$ such that no smooth variety of dimension $\leq \dim(X)$ has Fujita invariant in the range $((1-\epsilon)a,a)$ with respect to any big and nef Cartier divisor.  We define
\begin{equation*}
\xi(\dim(X),g(B),a,T) = 1 + \sup \left\{ 0 , \frac{1}{a\epsilon}\left(g(B) (a\dim(X)+2) + (\dim(Y) - a\dim(X)) + T) \right) \right\}.
\end{equation*}

Since $N$ parametrizes a dominant family of curves on $Y$, Proposition \ref{prop:domfamilyexpdim} yields an inequality
\begin{align*}
\dim(N) \leq -K_{Y} \cdot C + \dim(Y) + 2g(B).
\end{align*}
Combining with Equation \eqref{eq:assumedinequality} and rearranging, we find
\begin{equation*}
(K_{Y} - af^{*}K_{X}) \cdot C \leq g(B) (a\dim(X)+2) + (\dim(Y) -a \dim(X)) + T.
\end{equation*}
Since we are assuming $-f^{*}K_{X} \cdot C \geq \xi$, this inequality implies
\begin{equation*}
(K_{Y} - (1-\epsilon)a f^{*}K_{X}) \cdot C < 0.
\end{equation*}
Since $C$ moves in a dominant family on $Y$, this means that $K_{Y} - (1-\epsilon)a f^{*}K_{X}$ is not pseudo-effective.  In other words, we must have $a(Y,-f^{*}K_{X}) > (1-\epsilon)a$.  But by our choice of $\epsilon$ this implies $a(Y,-f^{*}K_{X}) \geq a$.
\end{proof}

\subsection{Classifying non-free curves}
Our earlier theorems address morphisms $B \to X$ which factor through a curve of genus $\geq 2$, so we will need an additional result to handle the genus $\leq 1$ case.

\begin{theorem} \label{theo:factorthroughrational}
Let $X$ be a smooth projective Fano variety of dimension $\geq 2$ defined over $\mathbb C$ and let $B$ be a complex smooth projective curve. There is a constant $\Theta = \Theta(\dim(X),g(B))$ such that the following results hold.

Let $M$ be an irreducible component of $\Mor(B,Z)$ parametrizing a dominant family of maps of anticanonical degree $\geq \Theta$.
\begin{enumerate}
\item Suppose that a general morphism parametrized by $M$ factors through a morphism $\mathbb{P}^{1} \to X$ that is birational onto its image.  Then the image of the general morphism is an anticanonical conic.
\item There cannot be a curve $B'$ of genus $\geq 1$ such that a general morphism parametrized by $M$ is the composition of a morphism $B \to B'$ of degree $\geq 2$ followed by a morphism $B' \to X$ that is birational onto its image.
\end{enumerate}
\end{theorem}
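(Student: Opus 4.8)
The plan is to treat the two statements using the degree bounds coming from Proposition \ref{prop:domfamilyexpdim} together with the factorization results of Section \ref{sect:background}, playing off the expected dimension of the space of maps against what factorization forces. For part (1): if a general map parametrized by $M$ factors through a birational map $s' \colon \PP^1 \to X$, then by Corollary \ref{coro:factoringresult} we may write the general map as $B \to \PP^1 \xrightarrow{s'} X$, and $M$ is (up to finite stuff) fibered over the component $N$ of $\Mor(\PP^1,X)$ parametrizing the image conics (more precisely, rational curves), with fiber the space of degree $d$ maps $B \to \PP^1$. The key numerical input is that for a dominant family on $X$ we have $\dim(M) \le -K_X \cdot s_*B + \dim(X) + 2g(B)$ by Proposition \ref{prop:domfamilyexpdim}, while the contribution of the "base" $N$ is controlled by $-K_X \cdot C'$ for the image curve $C'$ and the contribution of the "fiber" $\Mor(B,\PP^1)_d$ is $\le 2d + $ (bounded). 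Since $-K_X \cdot s_*B = d \cdot (-K_X \cdot C')$, comparing the growth rates in $d$ forces $-K_X \cdot C' = 2$ once $d \ge 2$ and the anticanonical degree is large; the case $d = 1$ is excluded because then the map is itself birational onto its image, contradicting the hypothesis of (1). So the image is a rational curve of anticanonical degree $2$, i.e.\ an anticanonical conic.

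For part (2): suppose for contradiction that the general map parametrized by $M$ factors as $B \xrightarrow{h} B' \xrightarrow{s'} X$ with $\deg(h) = d \ge 2$, $g(B') \ge 1$, and $s'$ birational onto its image. By Corollary \ref{coro:factoringresult} applied to $M$ (or rather to the normalizations of the images, via Lemma \ref{lemm:constantimage}) the curve $B'$ and the map $s'$ vary in a family, so $M$ is dominated (up to finite morphisms) by a family whose "base" parametrizes the maps $s' \colon B' \to X$ and whose "fiber" is $\Mor(B,B')_{d}$, the space of degree $d$ maps $B \to B'$. The base contributes at most $-K_X \cdot s'_*B' + \dim(X) + 2g(B)$ by Proposition \ref{prop:domfamilyexpdim} (the $s'$ form a dominant family on $X$ since the original maps do), and crucially the fiber $\Mor(B,B')_d$ has dimension bounded in terms of $g(B), g(B'), d$ only when $g(B') \ge 1$ — in fact it is $0$-dimensional when $g(B') \ge 2$ (rigidity of maps to higher-genus curves) and bounded by a Riemann–Hurwitz count when $g(B') = 1$. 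On the other hand $\dim(M) \ge -K_X \cdot s_*B + \dim(X)(1-g(B)) = d(-K_X \cdot s'_*B') + \dim(X)(1-g(B))$ by the lower bound in Section \ref{sect:spaceofmorphisms}. Comparing, the left side grows like $d$ times the anticanonical degree of $C' = s'_*B'$ while the right side grows like $1 \cdot (-K_X \cdot C') + O(1)$; once $-K_X \cdot s_*B \ge \Theta$ this is a contradiction for $d \ge 2$. One must choose $\Theta$ large enough (depending on $\dim X$ and $g(B)$, absorbing the bounded fiber contributions and the Riemann–Hurwitz ramification terms, using $g(B') \le g(B)$) to make the inequality strict.

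The main obstacle I expect is making the fiber-dimension bookkeeping uniform and rigorous: one needs to know that the locus of $M$ we are analyzing is genuinely a family over a base parametrizing the $s'$ (so that $\dim M$ is controlled by base plus generic fiber), which requires Lemma \ref{lemm:constantimage}/Corollary \ref{coro:factoringresult} to pin down that the intermediate curve $B'$ and its geometry do not jump, and one must handle the $g(B') = 1$ case separately since there $\Mor(B,B')_d$ is positive-dimensional (translations on the elliptic curve, plus moduli of $B'$) — here the bound is $\dim \Mor(B,B')_d \le \dim(B') + \dim(\text{moduli}) + (\text{ramification count})$, all bounded in terms of $g(B)$ and $d$, and then the growth-rate comparison in $d$ still wins. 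A secondary technical point is that the image of the general map in (1) could a priori be a multiple cover composed further, but the hypothesis that it factors through a map $\PP^1 \to X$ \emph{birational onto its image} rules this out, so the only freedom is the degree $d$ of $B \to \PP^1$, which the conic computation then constrains.
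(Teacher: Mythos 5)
Your proposal is correct and follows essentially the same route as the paper: bound $\dim(M)$ below by the expected dimension $-K_X\cdot s_*B+\dim(X)(1-g(B))$ and above by (dimension of the family of birational maps from the intermediate curve to $X$) plus (dimension of the space of degree $\geq 2$ maps from $B$ to that curve), then exploit that the covering degree multiplies the anticanonical degree only on the lower-bound side. The paper handles your elliptic-fiber bookkeeping cleanly via the tangent space bound $h^0(B,h^*T_{B'})\leq 1$ (and $=0$ for $g(B')\geq 2$), and makes your growth-rate comparison explicit through short casework, but these are the same estimates you describe.
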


\begin{proof}
Let $s : B \to X$ be a general map parametrized by $M$.
Set $r = \deg(s^{*}T_{X})$ and define $\Theta = 1 + \sup\{9,12(g(B)\dim(X) + 2g(B)- 2)\}$.

(1) First note that $\dim(M) \geq r + \dim(X) (1-g(B))$.  Suppose that the image of the general morphism $s$ is a rational curve of anticanonical degree $d$.  Since a dominant family of rational curves has the expected dimension, we find: 
\begin{align*}
\dim(M) & \leq \dim \Mor_{r/d}(B,\mathbb{P}^{1}) + (d + \dim(X) - 3) \\
& \leq \left(2\frac{r}{d} + 1 +2g(B)\right) + (d + \dim(X) - 3)
\end{align*}
Comparing, we see that
\begin{equation*}
r \left(1 - \frac{2}{d} \right) \leq d + g(B)\dim(X) + 2g(B) - 2.
\end{equation*}
Note that $2 \leq d$ and $d = r/e$ for some integer $e \geq 2$.  We have the following cases:
\begin{itemize}
\item Suppose $d \geq 5$.  We conclude that $r \leq \frac{5}{3}(d + g(B)\dim(X) + 2g(B) - 2)$.  Since $2d \leq r$, this in turn implies that $r \leq 10(g(B)\dim(X) + 2g(B)- 2)$, contradicting the bound $\Theta \leq r$.
\item Suppose $d = 4$.  Then $r \leq 2(d + g(B)\dim(X) + 2g(B)- 2)$.  If the generic map $B \to \mathbb{P}^{1}$ has degree $2$ then $r = 8$.  Otherwise $r \geq 3d$ and we conclude $r \leq 6(g(B)\dim(X) + 2g(B)- 2)$, contradicting the bound $\Theta \leq r$.
\item Suppose $d = 3$.  Then $r \leq 3(d + g(B)\dim(X) + 2g(B)- 2)$.  If the generic map $B \to \mathbb{P}^{1}$ has degree $2$ or $3$ then $r \leq 9$.  Otherwise $r \geq 4d$ and we conclude $r \leq 12(g(B)\dim(X) + 2g(B)- 2)$, contradicting the bound $\Theta \leq r$.
\end{itemize}
Altogether we see that we must have $d=2$, proving the statement.

(2) As before we have $\dim(M) \geq r + \dim(X) (1-g(B))$.  Suppose that the image of the general morphism $s$ is a curve birational to $B'$ of anticanonical degree $d$.  By Proposition \ref{prop:domfamilyexpdim} a dominant family of maps $B' \to X$ dimension at most $d + \dim(X) + 2g(B')$.  On the other hand, the tangent space to $\Mor_{r/d}(B,B')$ has dimension $1$ (if $B'$ is elliptic) or $0$ (if $g(B') \geq 2$), and this also bounds the dimension of the moduli space.  Altogether we find 
\begin{align*}
\dim(M) & \leq \dim \Mor_{r/d}(B,B') + (d + \dim(X) + 2g(B)) \\
& \leq d + \dim(X) + 2g(B) + 1
\end{align*}
Comparing against the lower bound on $\dim(M)$, we see that
\begin{equation*}
r \leq d + \dim(X)g(B) + 2g(B) + 1
\end{equation*}
Since we must have $r \geq 2d$, we see that $r \leq 2\dim(X)g(B) + 4g(B) + 2$.  But this contradicts the assumption $\Theta \leq r$.  
\end{proof}

We can now prove a classification theorem for non-free curves.

\begin{theorem} \label{theo:maintheoremabscase}
Let $X$ be a smooth projective Fano variety defined over $\mathbb C$ and let $B$ be a complex smooth projective curve.  There are constants $\widetilde{\xi} = \widetilde{\xi}(\dim(X),g(B))$ and $T^{+} = T^{+}(\dim(X),g(B))$ such that the following holds.  Suppose that $M \subset \Mor(B,X)$ is an irreducible component parametrizing non-free maps $s: B \to X$ of anticanonical degree $\geq \widetilde{\xi}$.  Let $\mathcal{U}^{\nu}$ be the normalization of the universal family over $M$ and let $\ev: \mathcal{U}^{\nu} \to X$ be the evaluation map.  Then either:
\begin{enumerate}
\item $ev$ is not dominant.  Then the subvariety $Y$ swept out by the curves parametrized by $M$ satisfies $a(Y,-K_{X}|_{Y}) \geq a(X,-K_{X})$.
\item $ev$ is dominant and the general map parametrized by $M$ is birational onto its image. Let $\overline{\mathcal U}^\nu$ be a normal projective compactification of $\mathcal{U}^{\nu}$ with a morphism $\overline{ev} : \overline{\mathcal U}^\nu \to X$ extending $ev$. Then the finite part $f: Y \to X$ of the Stein factorization of $\overline{ev}$ satisfies
\begin{equation*}
a(Y,-f^{*}K_{X}) = a(X,-K_{X}).
\end{equation*}
Furthermore, there is a rational map $\phi: Y \dashrightarrow Z$ such that the following properties hold.  Let $\widehat{Y}$ denote a smooth projective birational model of $Y$ admitting a morphism $\widehat{\phi}: \widehat{Y} \to Z$ that resolves $\phi$.
\begin{enumerate}
\item Let $F$ denote a general fiber of $\phi$.  Then we have $a(F,-f^{*}K_{X}|_{F}) = a(X,-K_{X})$ and $(F,-f^{*}K_{X}|_{F})$ is adjoint rigid.
\item  Let $\widehat{s}: B \to \widehat{Y}$ denote a map to $\widehat{Y}$ induced by a general point of $M$ and let $W$ denote the main component of $B \times_{Z} \widehat{Y}$.  Then the image in $M$ of the parameter space of deformations of the map $(\mathrm{id},\widehat{s}): B \to W$ has codimension at most $T$ in $M$.
\end{enumerate}

\item $ev$ is dominant and the general map parametrized by $M$ is not birational to its image.  In this case the image of the general map is a rational curve of anticanonical degree $2$.  Thus $ev$ factors rationally through a generically finite map $g: \mathcal{V} \to X$ where $\mathcal{V}$ is a projective model of a universal family $\mathcal U \to N$ of rational curves of anticanonical degree $2$ on $X$ where $N$ is an irreducible open locus of the Hilbert scheme of $X$.  
In particular $a(\mathcal{V},-g^{*}K_{X}) = a(X,-K_{X})$.  
\end{enumerate}
\end{theorem}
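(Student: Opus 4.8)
The plan is to follow the same three-case analysis as in \cite[Theorem 1.3]{LRT22}, adapting each step to the absolute setting. First I would set up the trichotomy according to the behavior of the evaluation map $\ev \colon \mathcal{U}^{\nu} \to X$: either $\ev$ is not dominant, or it is dominant with generically birational general member, or it is dominant with a non-birational general member. The constant $\widetilde{\xi}$ will be chosen large enough to invoke all the auxiliary results: Theorem~\ref{theo:factorthroughrational} (with its constant $\Theta$), Theorem~\ref{theo:absfoliationresult} (with a suitable $J$), Theorem~\ref{theo:ainvariantandsectionsabsolutecase} (applied with $a = a(X,-K_X)$ and an appropriate $T$), and the Spectrum Conjecture (Theorem~\ref{theo:Dicerbo}) to make these choices uniform in $\dim(X)$ and $g(B)$.

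For case (1), when $\ev$ is not dominant, the curves sweep out a proper subvariety $Y \subsetneq X$; the maps deform in a dominant family on a resolution $Y'$ of $Y$, so Proposition~\ref{prop:domfamilyexpdim} bounds $\dim(M)$ from above while the morphism space lower bound gives $\dim(M) \geq -K_X \cdot s_*B + \dim(X)(1-g(B))$. Plugging these into Theorem~\ref{theo:ainvariantandsectionsabsolutecase} with $f \colon Y' \to X$ and $a = a(X,-K_X)$ (noting that $a(Y,-K_X|_Y) \geq 1 \geq$ some lower bound forces the hypotheses once the degree is large) yields $a(Y,-K_X|_Y) \geq a(X,-K_X)$. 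For case (3), Theorem~\ref{theo:factorthroughrational}(1) immediately forces the image of the general map to be an anticanonical conic, and then a standard construction produces the projective model $\mathcal{V}$ of the universal family of anticanonical conics; since a general conic is free on $X$ the universal family $\mathcal{U} \to N$ is (generically) a $\mathbb{P}^1$-bundle fibered over the Hilbert scheme $N$, and $a(\mathcal{V}, -g^*K_X) = a(X,-K_X)$ follows from the product-type structure together with $a(\mathbb{P}^1, \OO(2)) = a(X,-K_X) = 1$ along conics. Case (2) is the substantive one: since the general map is birational and non-free, applying Theorem~\ref{theo:absfoliationresult} to the Stein factorization $f \colon Y \to X$ produces the foliation and the rational map $\phi \colon Y \dashrightarrow Z$; the codimension bound on the sublocus of $M$ parametrizing deformations of $(\phi\circ s, s)$ comes directly from part (2) of that theorem (with $T^+$ built out of $J$, $g(B)$, and $\gamma$). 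The equality $a(F, -f^*K_X|_F) = a(X,-K_X)$ and adjoint rigidity of the general fiber $(F, -f^*K_X|_F)$ follow by applying Theorem~\ref{theo:ainvariantandsectionsabsolutecase} to the family of restricted maps $B \to F$ inside $F$ (which is dominant by construction) to get $a(F,-f^*K_X|_F) \geq a(X,-K_X)$, while the reverse inequality $a(Y,-f^*K_X) \leq a(X,-K_X)$ is automatic from $f$ being generically finite and $-K_X$ being the anticanonical class, and adjoint rigidity follows because the deformations of $\widetilde{s}$ in $\widetilde{Y}$ cover $J \gg 0$ general points of a resolution (so the Iitaka fibration of $K_F - a K_X|_F$ cannot move).

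The main obstacle I anticipate is the careful bookkeeping needed to close the loop between the dimension estimates: one must verify that the hypotheses of Theorem~\ref{theo:ainvariantandsectionsabsolutecase} (namely the inequality $\dim(N) \geq a \cdot \dim(M) - T$ for the restricted family on $F$, and the degree lower bound $-f^*K_X \cdot C \geq \xi$) are actually met, which requires that the codimension bound from Theorem~\ref{theo:absfoliationresult}(2) be compatible with the value of $T$ one feeds into Theorem~\ref{theo:ainvariantandsectionsabsolutecase}, and this in turn constrains the choice of $J$. Because all of these constants depend only on $\dim(X)$ and $g(B)$ (using the Spectrum Conjecture to bound $a(X,-K_X)$ away from $0$ and to control the finitely many possible Fujita values), the constants $\widetilde{\xi}$ and $T^+$ can be defined recursively in the right order, but making sure there is no circularity — and that the genus $\leq 1$ exclusion in Theorem~\ref{theo:factorthroughrational}(2) correctly rules out intermediate elliptic or higher-genus curves in case (2) before invoking the foliation machinery — is the delicate part of the argument.
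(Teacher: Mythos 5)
Your overall architecture matches the paper's: the same trichotomy on $\ev$, the same auxiliary inputs (Theorem \ref{theo:ainvariantandsectionsabsolutecase} for the Fujita inequalities, Theorem \ref{theo:absfoliationresult} for the foliation and codimension bound, Theorem \ref{theo:factorthroughrational} for the non-birational case), and the same style of constant bookkeeping. Cases (1) and (3) are essentially right, with one small misplacement: the genus exclusion of Theorem \ref{theo:factorthroughrational}(2) is needed in case (3), not case (2) --- when the general map is not birational onto its image one first factors it through some curve $B'$ (Corollary \ref{coro:factoringresult}), then uses part (2) to kill $g(B') \geq 1$, and only then applies part (1); in case (2) the map is birational by hypothesis and no factoring arises.

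The genuine gap is in your proof of (2)(a). First, there is no ``family of restricted maps $B \to F$ inside $F$'': a general curve $s$ gives a \emph{section} of $W \to B$, where $W$ is the main component of $B \times_{Z} Y$, so it meets each fiber of $\phi$ in finitely many points rather than lying on a single fiber $F$. Theorem \ref{theo:ainvariantandsectionsabsolutecase} therefore cannot be applied on $F$ to get $a(F,-f^{*}K_{X}|_{F}) \geq a(X,-K_{X})$. Second, and more seriously, your justification of adjoint rigidity --- that the deformations of $\widetilde{s}$ pass through many general points, ``so the Iitaka fibration of $K_{F} - aK_{X}|_{F}$ cannot move'' --- is exactly the implication that fails in the absolute setting: Example \ref{exam:notadjointrigid} exhibits a surface $S$ in a quartic threefold with $(S,-K_{X}|_{S})$ not adjoint rigid, yet swept out by curves passing through arbitrarily many general points. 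The correct route (and the paper's) is to pass to the relative setting: the codimension bound of Theorem \ref{theo:absfoliationresult}(2) supplies the dimension hypothesis of \cite[Theorem 7.10]{LRT22} for the family of sections of the $B$-fibration $\widetilde{W} \to B$, and that theorem --- together with the fact that these sections pass through $\geq \Gamma+1$ general points of the total space $\widetilde{W}$ fibered over $B$, not merely of $F$ --- yields $a(\widetilde{W}_{\eta},-h^{*}K_{X}|_{\widetilde{W}_{\eta}}) = 1$ and adjoint rigidity of the \emph{generic} fiber; one then transfers to a general closed fiber $F$ of $\phi$ by invariance of plurigenera (\cite[Theorem 4.3]{LT17}). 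Without this detour through the relative result, your deduction of (2)(a) does not go through.
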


\begin{proof}
We first define several constants.  Let $\xi_{1}$ be the constant $\xi(\dim(X),g(B),1,0)$ in Theorem \ref{theo:ainvariantandsectionsabsolutecase}.  By \cite[Theorem 0.2]{KMM92} there is a constant $b$ depending only on $\dim(X)$ such that $-bK_{X}$ is basepoint free.  We next apply \cite[Theorem 7.10]{LRT22} with our choice of $b$, with $a_{rel} = a = 1$, with $E = \tau(\pi,E) = 0$, with $T = 0$, and with $\beta = 0$ to obtain a constant $\Gamma(\dim(X),g(B))$.  Define $T^{+} = (\dim(X)+1)(\Gamma + 3g(B) + \gamma + 2)$ where  
\begin{equation*}
\gamma =  \left( g(B)^{2}\dim(X)^{2} + 5g(B)^{2} \dim(X) + 2g(B) \dim(X) + 8g(B) + 1 \right) \dim(X).
\end{equation*}
Finally set
\begin{equation*}
\widetilde{\xi}(\dim(X),g(B)) = \sup \left\{ \begin{array}{c} \xi_{1}, \xi(\dim(X),g(B),1,(\dim(X)+1)(\Gamma + 3g(B) + \gamma + 2)), \\ \Theta, \dim(X)(\Gamma + 2g(B) + \gamma + 1) + g(B)(\dim(X)+2) \end{array} \right\}
\end{equation*}
where $\xi$ is defined as in Theorem \ref{theo:ainvariantandsectionsabsolutecase} and $\Theta$ is defined as in Theorem \ref{theo:factorthroughrational}. 

(1) Let $\psi: \widetilde{Y} \to Y$ be a resolution and let $N$ parametrize the strict transforms of the curves on $\widetilde{Y}$.  Denote the composition of $\phi$ with the inclusion map by $\widetilde{\psi}: \widetilde{Y} \to X$.  Since $\dim(N) = \dim(M)$, we may apply Theorem \ref{theo:ainvariantandsectionsabsolutecase} to $\widetilde{\psi}$ with the constants $a=1$ and $T=0$.  We conclude that $a(\widetilde{Y}, -\widetilde{\psi}^{*}K_{X}) \geq a(X,-K_{X})$. 
 Since the Fujita invariant is a birational invariant, we conclude (1).

(2)  
Let $\psi: \widetilde{Y} \to Y$ be a resolution and let $N$ parametrize the strict transforms of the curves on $\widetilde{Y}$.  Denote the composition of $\psi$ with $f$ by $\widetilde{f}: \widetilde{Y} \to X$.  Since $\dim(N) = \dim(M)$, we may apply Theorem \ref{theo:ainvariantandsectionsabsolutecase} to $\widetilde{f}$ with the constants $a=1$ and $T=0$.  We conclude that $a(\widetilde{Y}, -\widetilde{f}^{*}K_{X}) \geq a(X,-K_{X})$; since $\widetilde{f}$ is dominant the equality must be achieved.  Since the Fujita invariant is a birational invariant, we also have $a(Y, -f^{*}K_{X}) = a(X,-K_{X})$.

Since our curves have anticanonical degree $\geq \widetilde{\xi}$ and the general morphism $s$ is birational onto its image, we can apply Theorem \ref{theo:absfoliationresult} to $X$ and $M$ with $J = \sup\{2g(B) + 3,\Gamma + 1\}$.  Since by assumption the curves parametrized by $M$ are non-free the corresponding sections cannot go through $J$ general points of $Y \times B$ (see \cite[Proposition 3.7]{LRT22}).   Thus Theorem \ref{theo:absfoliationresult} yields a non-trivial rational map $\phi: Y \dashrightarrow Z$ with the following property.  Suppose $\widehat{\phi}: \widehat{Y} \to Z$ is a resolution of $\phi$.  For a general map $\widehat{s}: B \to \widehat{Y}$ parametrized by $M$, let $W$ denote the main component of $B \times_{W} \widehat{Y}$.  Then there is a resolution $\widetilde{W} \to W$ and a section $\widetilde{s}: B \to \widetilde{W}$ such that the deformations of $\widetilde{s}$ contain at least $\Gamma+1$ general points of $\widetilde{W}$.  Furthermore the space of deformations of $\widetilde{s}$ in $\widetilde{W}$ has codimension at most $T^{+}$ in $M$.  Letting $h: \widetilde{W} \to X$ denote the induced map and $\widetilde{W}_{\eta}$ denote the generic fiber of $h$, our choice of $\xi$ shows that $a(\widetilde{W}_{\eta},-h^{*}K_{X}|_{\widetilde{W}_{\eta}}) = 1$ and the conclusion of \cite[Theorem 7.10]{LRT22} shows that $(\widetilde{W}_{\eta},-h^{*}K_{X}|_{\widetilde{W}_{\eta}})$ is adjoint rigid.  Since the Fujita invariant and the Iitaka dimension are constant for general fibers of the map $W \to B$ by invariance of plurigenera (see \cite[Theorem 4.3]{LT17}) we conclude that a general fiber $F$ of $W \to B$ has Fujita invariant $1$ and is adjoint rigid with respect to the pullback of $-K_{X}$.  But since $s$ is general we see that $F$ is also a general fiber of $\widehat{\phi}$, finishing the proof.

(3) It only remains to consider the case when a general map $s$ is the composition of a morphism $B \to B'$ of degree $\geq 2$ and a morphism $s'': B' \to X$ that is birational to the image.
Theorem \ref{theo:factorthroughrational} combined with our definition of $\widetilde{\xi}$ shows the case $g(B') \geq 1$ is not possible.  If $g(B') = 0$ then the rational factoring through $\mathcal{V}$ is an immediate consequence of Theorem \ref{theo:factorthroughrational}.  Note that a rational curve of anticanonical degree $2$ on $X$ satisfies $a(C,-K_{X}) = 1 = a(X,-K_{X})$.  Since $\mathcal{V}$ is covered by such conics, we have $a(\mathcal{V},-g^{*}K_{X}) \geq a(X,-K_{X})$ by \cite[Lemma 4.8]{LST18}.  The reverse inequality $a(\mathcal{V},-g^{*}K_{X}) \leq a(X,-K_{X})$ follows from the Riemann-Hurwitz formula as in \cite[Lemma 4.7]{LST18}.  
\end{proof}

\section{Boundedness statements in the absolute case}
\label{sec:boundedness}

The goal of this section is to prove the following boundedness theorem.

\begin{theorem}
\label{theorem:non-dominant}
Let $X$ be a smooth projective Fano variety and let $B$ be a smooth projective curve, both are defined over $\mathbb C$.  There is a proper closed subset $V \subsetneq X$ such that if $M \subset \Mor(B,X)$ is an irreducible component parametrizing a non-dominant family of curves then every curve parametrized by $M$ is contained in $V$.
\end{theorem}

This theorem is almost an immediate consequence of the analogous boundedness result in the relative setting, \cite[Theorem 8.10]{LRT22}.  There is one important issue: there is a particular closed set of $X \times B$ used in \cite{LRT22} and we must verify that this closed set does not dominate $X$ under the projection map.  This claim is true, but unfortunately a careful verification takes some work.  We make this verification in the rest of the section, giving the precise statement in Theorem~\ref{theo:positivebounded}.

For the rest of this section $k$ denotes an algebraic closed field of characteristic $0$ and $B$ denotes a smooth projective curve defined over $k$. We denote the function field of $B$ by $F$.  We will freely use the language and constructions of \cite{LST18} and \cite{LRT22} when we give the appropriate reference.

\subsection{Lemma 8.3 of \cite{LST18}}

First we recall some definitions from \cite{LST18}:
\begin{definition}
Let $F$ be a field of characteristic $0$. Let $X$ be a smooth geometrically uniruled projective variety defined over $F$ and $L$ be a big and nef $\mathbb Q$-divisor on $X$. We define the $b$-invariant for $(X, L)$ as
\[
b(F, X, L) := \text{codimension of the supported face of $\overline{\mathrm{Eff}}^1(X)$ containing $a(X, L)L + K_X$.}
\]
When $X$ is singular, we take a resolution $\beta : \widetilde{X} \to X$ and define the $b$-invariant as
\[
b(F, X, L):= b(F, \widetilde{X}, \beta^*L).
\]
This is well-defined due to \cite[Proposition 2.10]{HTT15}.
When $F$ is algebraically closed, we drop $F$ from $b(F, X, L)$ and simply denote it by $b(X, L)$.
\end{definition}

\begin{definition}[{\cite[Definition 7.1 and Definition 8.2]{LST18}}]
First let us assume that our ground field $F$ is an algebraically closed field of characteristic $0$.
A good family of adjoint rigid varieties is a morphism $p : \mathcal U \to W$ of quasi-projective smooth varieties and $p$-relatively big and nef $\mathbb Q$-divisor $L$ on $\mathcal U$ satisfying the following properties:
\begin{enumerate}
\item The morphism $p$ is projective, smooth, and surjective with irreducible fibers;
\item for any closed point $w \in W$ and the corresponding fiber $\mathcal U_w$ above $w$, $a(\mathcal U_w, L|_{\mathcal U_w})$ is constant and positive, and $(\mathcal U_w, L|_{\mathcal U_w})$ is adjoint rigid;
\item $b(\mathcal U_w, L|_{\mathcal U_w})$ is also constant for any closed point $w \in W$, and;
\item Let $Q$ denote the union of all divisors $D$ in fibers $\mathcal U_w$ such that $a(D, L|_D) > a(\mathcal U_w, L|_{\mathcal U_w})$. Then $Q$ is closed in $\mathcal U$ and flat over $W$. Moreover if we set $\mathcal V = \mathcal U \setminus Q$, there is a projective birational morphism $\phi : \mathcal U' \to \mathcal U$ that is an isomorphism over $\mathcal V$ such that $\mathcal U'$ is smooth over $W$ and $\mathcal U' \setminus \mathcal V$ is a strict normal crossings divisor relative to $W$.
\end{enumerate}
Now let us assume that $F$ is an arbitrary field of characteristic $0$. A projective morphism $p : \mathcal U \to W$ is a good family of adjoint rigid varieties if the base change of $p$ to $\overline{F}$ is a good family of adjoint rigid varieties.
\end{definition}

\begin{definition}[{\cite[p.1405]{LST18}}]
Again let us assume that our ground field $F$ is an algebraically closed field of characteristic $0$.
A good morphism of good families of adjoint rigid varieties is a diagram
\begin{equation*}
\xymatrix{ \mathcal Y \ar[r]^{f} \ar[d]_{q} &  \mathcal {U} \ar[d]_{p} \\
T \ar[r] & W}
\end{equation*}
and $p$-relatively big and nef $\mathbb Q$-divisor $L$ on $\mathcal U$ such that $p, q$ are good families of adjoint rigid varieties with respect to $L$ and $f^*L$ respectively, the relative dimensions of $p, q$ are equal, and for any closed point $t \in T$, we have $a(\mathcal Y_t, f^*L|_{\mathcal Y_t}) = a(\mathcal U_w, L|_{\mathcal U_w})$.

When $F$ is an arbitrary field of characteristic $0$, a diagram 
\begin{equation*}
\xymatrix{ \mathcal Y \ar[r]^{f} \ar[d]_{q} &  \mathcal {U} \ar[d]_{p} \\
T \ar[r] & W}
\end{equation*}
is a good morphism of good families if its base change to $\overline{F}$ is a good morphism.
\end{definition}

The following lemma is essentially \cite[Lemma 8.3]{LST18}. The only difference is that we perform every construction in the absolute setting:

\begin{lemma} \label{lemm: finitelymanycoversovernf}
Let $X$ be a uniruled smooth projective variety defined over $k$ 
and $L$ be a big and nef $\mathbb Q$-divisor on $X$.
Let $\mathcal X$ be the base change of $X$ to $F$ and $L_F$ be the base change of $L$ to $F$. 
Let $p : U \rightarrow W$ be a surjective morphism between projective $k$-varieties where $U$ is equipped with a morphism $s : U \to X$.  Let $p_F : \mathcal U \rightarrow \mathcal W$ be the base change of $p$ to $F$ with the morphism $s_F : \mathcal U \to \mathcal X$.

Suppose that there exists a Zariski open subset $W^\circ \subset W$ such that $p: U^\circ \to W^\circ$ is a good family of adjoint rigid varieties over $k$ (here $U^\circ$ denotes the preimage of $W^\circ$) and that any fiber over $W^\circ$ has the same $a$-invariant with respect to $s^{*}L$ as $X$ has with respect to $L$. Then there exist a proper closed subset $R \subsetneq X$ and a finite set of dominant generically finite morphisms  $\{ f_{j}: Y_{j} \to {U} \}$ defined over $k$ that can be fit into commutative diagrams
\begin{equation*}
\xymatrix{ {Y}_{j} \ar[r]^{f_{j}} \ar[d]_{q_{j}} &  {U} \ar[d]_{p} \\
T_{j} \ar[r] & W}
\end{equation*}
such that the following holds.
Let $\mathcal Y_{j}, \mathcal T_{j}, f_{j, F}, q_{j, F}$ be the base changes of the corresponding objects to $F$. Then:
\begin{enumerate}
\item both $Y_j$ and $T_j$ are projective varieties, $Y_j$ is smooth, $T_j$ is normal, and $q_{j}: {Y}_{j} \to T_{j}$ is generically a good family of adjoint rigid varieties;
\item the canonical model for $a(X, L)f_j^*s^*L + K_{Y_j}$ is a morphism and this morphism agrees with $q_j$ over some open set of $T_j$;  
\item the morphism $T_j \rightarrow W$ is dominant, finite, and Galois;
\item we have $\mathrm{Bir}(Y_{j}/X) = \mathrm{Aut}(Y_{j}/X)$;
\item
Assume that $q: \mathcal{Y} \to \mathcal T$ is a projective surjective morphism of varieties over $F$ where $\mathcal{Y}$ is smooth and geometrically integral and that we have a diagram
\begin{equation*}
\xymatrix{ \mathcal{Y} \ar[r]^{f} \ar[d]_{q} &  \mathcal{U} \ar[d]_{p_F} \\
\mathcal T \ar[r]^{g} & \mathcal W}
\end{equation*}
 satisfying the following properties:
\begin{enumerate}
\item There is a Zariski open subset $\mathcal T' \subset \mathcal T$ such that $\mathcal{Y}$ is a good family of adjoint rigid varieties over $\mathcal T'$ and the map $f: q^{-1}(\mathcal T') \to \mathcal{U}$ has image in $\mathcal{U}^{\circ}$ and is a good morphism of good families.
\item There exists a rational point $y \in \mathcal{Y}(F)$ such that $s_F \circ f(y) \not \in \mathcal R$ where $\mathcal R$ is the base change of $R$ to $F$.
\end{enumerate}
Then for some index $j$ there will be a twist $f_{j}^\sigma : \mathcal Y_j^\sigma \rightarrow \mathcal U$ over $F$ such that $f(y) \in f_{j}^{\sigma}(\mathcal{Y}_{j}^{\sigma}(F))$.  Furthermore, there exists a dominant generically finite map $\widetilde{\mathcal T} \to \mathcal T$ such that the main component $\widetilde{q}: \widetilde{\mathcal{Y}} \to \widetilde{\mathcal T}$ of the base change of $q$ by $\widetilde{\mathcal T} \to \mathcal T$
satisfies that the induced map $\widetilde{f}: \widetilde{\mathcal{Y}} \to \mathcal{U}$ factors rationally through $f_{j}^{\sigma}$ in a way that a general geometric fiber of $\widetilde{q}$ maps birationally to a geometric fiber of the map $q_{j}^{\sigma}: \mathcal{Y}_{j}^{\sigma} \to \mathcal T_{j}^{\sigma}$.
\end{enumerate}
\end{lemma}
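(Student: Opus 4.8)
The plan is to deduce this statement by a base-change argument from the relative version, namely \cite[Lemma 8.3]{LST18} applied over the function field $F = k(B)$. The point of Lemma~\ref{lemm: finitelymanycoversovernf} is that \emph{every} construction in that proof can be performed already over $k$ rather than over $F$, and the $F$-level conclusions are then obtained by base change. So first I would run the proof of \cite[Lemma 8.3]{LST18} verbatim, but carrying out each step — the canonical model of $a(X,L)s^{*}L + K_{U}$ relative to $W$, its Stein factorization, the passage to a Galois cover $T_{j} \to W$ trivializing the relevant birational automorphism group schemes, the resolution making $Y_{j}$ smooth, and the extraction of the bad locus $R \subsetneq X$ from the finitely many exceptional/ramification data — all over $k$. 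This produces the finite list $\{f_{j}: Y_{j} \to U\}$ and the diagrams over $k$, which gives properties (1)–(4) immediately, exactly as in \cite[Lemma 8.3]{LST18}, since those properties are statements over $k$ and the proof there is insensitive to the ground field.

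The real content is property (5), the twisting/factoring statement, and here is where the base change to $F$ enters. Given the data $q: \mathcal{Y} \to \mathcal{T}$ over $F$ with the good-morphism-of-good-families diagram and the rational point $y \in \mathcal{Y}(F)$ with $s_{F}\circ f(y) \notin \mathcal{R}$, I would argue as follows. Since $\mathcal{Y}$ is geometrically integral and generically a good family of adjoint rigid varieties with the same $a$-invariant, the geometric generic fiber of $q$ is, up to birational modification, a good family of the same type as the fibers of $p$; by the characterization in property (2) (the canonical model of $a(X,L)f_{j}^{*}s^{*}L + K_{Y_{j}}$) and the uniqueness of canonical models, the geometric generic fiber of $q$ must be birational to a geometric fiber of some $q_{j}$. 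The Galois descent in property (3) then lets us identify which conjugate: the fiber of $q$ through $y$ corresponds, over $\bar F$, to a fiber of $q_{j}$ for a unique $j$, and the Galois-twisting data needed to match this over $F$ (rather than only over $\bar F$) produces the twist $\sigma$ and hence the twisted morphism $f_{j}^{\sigma}: \mathcal{Y}_{j}^{\sigma} \to \mathcal{U}$ over $F$. The condition $s_{F}\circ f(y) \notin \mathcal{R}$ is exactly what guarantees that $y$ lands in the locus where this matching is an honest birational identification rather than being absorbed into the indeterminacy/exceptional loci, so that $f(y) \in f_{j}^{\sigma}(\mathcal{Y}_{j}^{\sigma}(F))$. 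Finally, to upgrade from the single fiber through $y$ to the factoring statement for the whole family, I would take $\widetilde{\mathcal{T}} \to \mathcal{T}$ to be the (dominant, generically finite) cover that pulls back the Galois cover $\mathcal{T}_{j}^{\sigma} \to \mathcal{W}$ along $g: \mathcal{T} \to \mathcal{W}$, pass to the main component $\widetilde{\mathcal{Y}} \to \widetilde{\mathcal{T}}$, and observe that over $\widetilde{\mathcal{T}}$ both families now have compatible canonical-model structures, so the universal property of the canonical model yields the rational factoring of $\widetilde{f}$ through $f_{j}^{\sigma}$ with general geometric fibers mapping birationally, as claimed.

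The main obstacle I anticipate is the bookkeeping around \textbf{twists and Galois descent over $F$}: $F = k(B)$ is not algebraically closed, so whereas \cite[Lemma 8.3]{LST18} works over a number field (or similar) and already handles twists, here one must be careful that the cocycle classifying the twist $\sigma$ genuinely lives in the correct Galois cohomology group of $\mathrm{Gal}(\bar F/F)$ and that the resulting $\mathcal{Y}_{j}^{\sigma}$ is a variety over $F$ with an $F$-morphism to $\mathcal{U} = \mathcal{U}_{F}$. A secondary subtlety is ensuring that the closed set $R \subsetneq X$, constructed over $k$, is chosen large enough to contain the images of all the indeterminacy loci, exceptional divisors, and ramification loci that could obstruct the identification in property (5) — this is a finite union because the list $\{f_{j}\}$ is finite, but one must check that no component of it is all of $X$, which follows because each $f_{j}$ is dominant and generically finite and the problematic loci are proper closed in each $Y_{j}$. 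Beyond these points the argument is a faithful transcription of \cite[Lemma 8.3]{LST18}, and I would present it as such, emphasizing only the places where the absolute setting forces a change of ground field.
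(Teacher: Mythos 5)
Your proposal follows essentially the same route as the paper's proof, which simply observes that every construction in Steps 1 and 2 of the proof of \cite[Lemma 8.3]{LST18} can be carried out over $k$, so that properties (1)--(4) hold for the $k$-objects and the universal property (5) for their base changes to $F$ is justified by Step 3 of that proof. Your expanded discussion of the twisting/Galois-descent bookkeeping and of the choice of $R$ is consistent with (and fills in detail omitted by) the paper's two-sentence argument.
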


\begin{proof}
This lemma follows from the proof of \cite[Lemma 8.3]{LST18}. Indeed, one can perform every construction in Steps 1 and 2 of the proof of \cite[Lemma 8.3]{LST18} over $k$. Their base changes to $F$ will satisfy the universal property (5) which can be justified by Step 3 of the proof of \cite[Lemma 8.3]{LST18}.
\end{proof}

\subsection{Section 2.5 of \cite{LRT22}}

We modify \cite[Construction 2.17]{LRT22}.

\begin{construction} \label{cons:rigidsubvarieties}
Let $X$ be a uniruled smooth projective $k$-variety and let $L$ be a big and nef $\mathbb{Q}$-Cartier divisor on $X$.  It follows from \cite[Theorem 4.19]{LST18} that there are a proper closed subset $V$, finitely many projective varieties $W_i \subset \mathrm{Hilb}(X)$, proper families $p_{i}: U_{i} \to W_{i}$ where $U_i$ is a smooth birational model of the universal family $U_i' \to W_i$, and dominant generically finite morphisms $s_i :  U_i \to X$
such that
\begin{itemize}
\item a general fiber $Z$ of $p_{i} : U_{i} \rightarrow W_{i}$ is a smooth uniruled projective variety which is mapped birationally by $s_{i}$ onto the subvariety of $X$ parametrized by the corresponding point of $\Hilb(X)$ and it also satisfies $a(Z,s_{i}^{*}L|_{Z}) = a(X,L)$ and is adjoint rigid with respect to $s_{i}^{*}L|_{Z}$;
and
\item for every subvariety $Y \subset X$ not contained in $\mathbf{B}_{+}(L)$ which satisfies $a(Y,L|_{Y}) \geq a(X,L)$ and which is adjoint rigid with respect to $L$, either $Y$ is contained in $V$ or there is some index $i$ and a smooth fiber of $p_{i}$ that is mapped birationally to $Y$ under the map $s_{i}$.
\end{itemize}
\end{construction}

The following theorem is essentially \cite[Theorem 2.18]{LRT22} but stated in the absolute setting.

\begin{theorem} \label{theo:ainvboundedandtwists}
Let $X$ be a uniruled smooth projective $k$-variety and let $L$ be a big and nef $\mathbb{Q}$-Cartier divisor on $X$.  Denote by $\{ p_{i}: {U}_{i} \to W_{i}\}$ the finite set of families equipped with maps $s_{i}: U_{i} \to X$ and by $V$ the closed subset of Construction \ref{cons:rigidsubvarieties}.  There are a closed set $R \subset X$ and finitely many smooth projective varieties $Y_{i, j}$ equipped with dominant morphisms $r_{i, j}: Y_{i, j} \to T_{i, j}$ with connected fibers and dominant morphisms $h_{i, j}: Y_{i, j} \to U_i$ forming commuting diagrams
\begin{equation*}
\xymatrix{ {Y}_{i, j} \ar[r]^{h_{i, j}} \ar[d]_{r_{i, j}} &  {U}_i \ar[d]_{p_{i}} \\
T_{i, j} \ar[r]_{t_{i, j}} & W_i}
\end{equation*}
that satisfy the following properties.
Let us denote the base changes of $$X, L, U_i, s_i, Y_{i, j}, T_{i, j}, r_{i, j}, h_{i, j}, R$$ to $F$ by $$\mathcal X, L_F, \mathcal U_i, s_{i, F}, \mathcal Y_{i, j}, \mathcal T_{i, j}, r_{i, j, F}, h_{i, j, F}, \mathcal R$$ respectively. Then we have
\begin{enumerate}
\item each map $h_{i, j}$ is generically finite and $f_{i, j} = s_i \circ h_{i, j}$ is not birational; 
\item $t_{i, j}$ is a finite Galois cover and $T_{i, j}$ is normal;
\item we have $\mathrm{Bir}(Y_{i, j}/U_i) = \mathrm{Aut}(Y_{i, j}/U_i)$;
\item every twist $\mathcal Y_{i, j}^{\sigma}$ of $\mathcal Y_{i, j}$ over $\mathcal U_i$ admits a morphism $r_{i, j, F}^{\sigma}: \mathcal Y_{i, j}^{\sigma} \to \mathcal T_{i,j}^{\sigma}$ which is a twist of $r_{i, j, F}$;
\item we have $a(Y_{i, j},f_{i, j}^{*}L) = a(X,L)$;
\item suppose that $\mathcal Y$ is a geometrically integral smooth projective variety and that $f: \mathcal Y \to \mathcal X$ is a morphism that is generically finite onto its image but not birational such that $a(\mathcal Y,f^{*}L_F) \geq a(\mathcal X,L_F)$.  Suppose furthermore that $y \in \mathcal Y(F)$ satisfies $f(y) \not \in \mathcal R$.  Then:
\begin{enumerate}
\item there are indices $i,j$ and a twist $h_{i, j, F}^{\sigma}: \mathcal Y_{i, j}^{\sigma} \to \mathcal U_i$ of $h_{i, j, F}$ such that $f(y) \in s_{i, F}(h_{i, j, F}^{\sigma}(\mathcal Y_{i, j, F}^{\sigma}(F)))$, and
\item if $(\mathcal Y,f^{*}L_F)$ is adjoint rigid then furthermore $f$ factors rationally through $h_{i, j, F}^{\sigma}$ and $f$ maps $\mathcal Y$ birationally to a fiber of $r_{i, j, F}^{\sigma}$.
\end{enumerate}
\end{enumerate}
\end{theorem}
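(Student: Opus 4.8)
The plan is to follow the proof of \cite[Theorem 2.18]{LRT22} closely, feeding in the absolute inputs established above in place of their relative counterparts: Construction \ref{cons:rigidsubvarieties} replaces \cite[Construction 2.17]{LRT22}, and Lemma \ref{lemm: finitelymanycoversovernf} replaces the relative form of \cite[Lemma 8.3]{LST18}. Concretely, I would begin with the finitely many families $p_{i} : U_{i} \to W_{i}$ together with their dominant generically finite maps $s_{i} : U_{i} \to X$ and the proper closed set $V$ supplied by Construction \ref{cons:rigidsubvarieties}. Shrinking each $W_{i}$ to the open locus over which $p_{i}$ is a good family of adjoint rigid varieties with fibers of $a$-invariant $a(X,L)$ relative to $s_{i}^{*}L$ (this is precisely the locus furnished by Construction \ref{cons:rigidsubvarieties}), I would apply Lemma \ref{lemm: finitelymanycoversovernf} to each pair $(p_{i} : U_{i} \to W_{i},\ s_{i} : U_{i} \to X)$. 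This yields a proper closed subset $R_{i} \subsetneq X$ and finitely many dominant generically finite maps $h_{i,j} : Y_{i,j} \to U_{i}$ over $W_{i}$, fitting into the asserted commuting squares with $r_{i,j} : Y_{i,j} \to T_{i,j}$, $t_{i,j} : T_{i,j} \to W_{i}$, and carrying properties (1)--(5) of that lemma. I would then take $R = V \cup \bigcup_{i} R_{i}$ (enlarged as needed, see below), discarding any index for which $f_{i,j} := s_{i} \circ h_{i,j}$ is birational onto its image, since such indices play no role in the universal property, which concerns only non-birational $f$.

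With this construction in hand, properties (1)--(5) of the theorem should follow essentially formally. Generic finiteness of $h_{i,j}$ and non-birationality of $f_{i,j}$ hold by construction; that $t_{i,j}$ is a finite Galois cover and $T_{i,j}$ is normal is property (3) of Lemma \ref{lemm: finitelymanycoversovernf}; and $\mathrm{Bir}(Y_{i,j}/U_{i}) = \mathrm{Aut}(Y_{i,j}/U_{i})$ follows from property (4) of the lemma, since $\mathrm{Bir}(Y_{i,j}/U_{i}) \subseteq \mathrm{Bir}(Y_{i,j}/X) = \mathrm{Aut}(Y_{i,j}/X)$ (using that $U_{i} \to X$ is generically finite) and an everywhere-defined automorphism that is birational over $U_{i}$ is automatically regular over $U_{i}$. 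For property (4), I would use property (2) of the lemma: $r_{i,j}$ agrees over an open set with the canonical model of $a(X,L)f_{i,j}^{*}L + K_{Y_{i,j}}$, and since $f_{i,j}^{*}L$ is invariant under $\mathrm{Aut}(Y_{i,j,\bar{F}}/U_{i,\bar{F}})$, this group acts compatibly on $T_{i,j,\bar{F}}$; hence any cocycle defining a twist $\mathcal{Y}_{i,j}^{\sigma}$ over $\mathcal{U}_{i}$ also defines a twist $\mathcal{T}_{i,j}^{\sigma}$ together with a morphism $r_{i,j,F}^{\sigma}$. Property (5), $a(Y_{i,j}, f_{i,j}^{*}L) = a(X,L)$, follows by combining the inequality $\le$ coming from the dominant generically finite map $f_{i,j}$ with the reverse inequality coming from a general fiber of $r_{i,j}$, which by construction is adjoint rigid of $a$-invariant $a(X,L)$; here I would cite the standard behaviour of the Fujita invariant under generically finite morphisms and fibrations from \cite{LST18}.

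The substantive content is property (6), which I would deduce from the universal property (5) of Lemma \ref{lemm: finitelymanycoversovernf}. Given $f : \mathcal{Y} \to \mathcal{X}$ generically finite onto its image but not birational with $a(\mathcal{Y}, f^{*}L_{F}) \ge a(\mathcal{X}, L_{F})$, and $y \in \mathcal{Y}(F)$ with $f(y) \notin \mathcal{R}$, I would first reduce to the adjoint rigid case by passing to the canonical fibration $\mathcal{Y} \dashrightarrow \mathcal{Z}$ of $(\mathcal{Y}, f^{*}L_{F})$: a general fiber $\mathcal{Y}_{z}$ is smooth projective, adjoint rigid, with $a(\mathcal{Y}_{z}, f^{*}L_{F}|_{\mathcal{Y}_{z}}) = a(\mathcal{Y}, f^{*}L_{F}) \ge a(\mathcal{X}, L_{F})$, and taking $z$ to be the image of $y$ (resolving near $y$ if necessary) the fiber passes through $y$ and, since $\mathcal{R}$ contains the base change $\mathcal{V}$ of $V$, satisfies $f(\mathcal{Y}_{z}) \not\subseteq \mathcal{V}$. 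Thus $f(\mathcal{Y}_{z}) \subseteq \mathcal{X}$ is an adjoint rigid subvariety of the correct $a$-invariant avoiding $V$, so the base change to $F$ of Construction \ref{cons:rigidsubvarieties} identifies it, up to birational modification, with a fiber of some $p_{i}$; in particular, after a finite base extension and modification, the restriction of $f$ to $\mathcal{Y}_{z}$ lifts through $s_{i,F}$ to a map into $\mathcal{U}_{i}$ landing in $\mathcal{U}_{i}^{\circ}$. Applying property (5) of Lemma \ref{lemm: finitelymanycoversovernf} with this data produces an index $j$ and a twist $h_{i,j,F}^{\sigma} : \mathcal{Y}_{i,j}^{\sigma} \to \mathcal{U}_{i}$ with $f(y) \in s_{i,F}(h_{i,j,F}^{\sigma}(\mathcal{Y}_{i,j,F}^{\sigma}(F)))$, giving (6)(a); and the further factorization clause of the lemma — that a general geometric fiber of the base-changed $\widetilde{q}$ maps birationally to a geometric fiber of $q_{i,j}^{\sigma}$ — upgrades this, when $(\mathcal{Y}, f^{*}L_{F})$ is itself adjoint rigid, to the rational factoring of $f$ through $h_{i,j,F}^{\sigma}$ with $\mathcal{Y}$ mapped birationally to a fiber of $r_{i,j,F}^{\sigma}$, which is (6)(b).

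The main obstacle I expect is the twist bookkeeping over the non-closed field $F$ in this last step: one must ensure that the twist output by Lemma \ref{lemm: finitelymanycoversovernf} is compatible, via $\mathrm{Bir} = \mathrm{Aut}$, with a single twisted fibration $r_{i,j,F}^{\sigma}$ rather than merely a twisted map to $\mathcal{W}_{i}$, and that the rational point $y$ genuinely survives both the reduction to the adjoint rigid fiber $\mathcal{Y}_{z}$ and the passage to the finite cover $\widetilde{\mathcal{T}} \to \mathcal{T}$ appearing in the lemma. A secondary point is that the closed set $R$ must be enlarged beyond $V \cup \bigcup_{i} R_{i}$ to also absorb the $s_{i,F}$-images of the loci where the $p_{i}$ or the $h_{i,j}$ degenerate, and one has to check this is still a proper closed subset of $X$ — which holds because all the data come from finitely many constructions over $k$.
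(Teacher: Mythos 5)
Your proposal is correct and takes essentially the same route as the paper: the paper's proof simply observes that the argument of \cite[Theorem 2.18]{LRT22} goes through once its two inputs are replaced by their absolute counterparts, namely Lemma \ref{lemm: finitelymanycoversovernf} in place of \cite[Lemma 8.3]{LST18} and Construction \ref{cons:rigidsubvarieties} in place of \cite[Construction 2.17]{LRT22}, noting that the relevant closed set is the base change of a closed subset $V \subset X$. Your walkthrough of properties (1)--(6), including the reduction to the adjoint rigid case via the canonical fibration and the twist bookkeeping, is an unpacking of that same argument rather than a different approach.
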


\begin{proof}
There are two inputs into \cite[Theorem 2.18]{LRT22}: \cite[Lemma 8.3]{LST18} and \cite[Construction 2.17]{LRT22}.  Lemma \ref{lemm: finitelymanycoversovernf} is a version of \cite[Lemma 8.3]{LST18} in the absolute setting.  It is clear that the closed set $\mathcal{V}$ of $X \times B$ constructed in \cite[Construction 2.17]{LRT22} is obtained by base change from a closed subvariety $V \subset X$.  With these changes the proof of \cite[Theorem 2.18]{LRT22} works with no issues.
\end{proof}

\subsection{Section 8.1 of \cite{LRT22}}

Here we improve results from \cite[Section 8]{LRT22} in the absolute setting.  Given a smooth projective curve $B$, we denote by $\eta$ its generic point; given a morphism $\mathcal{X} \to B$, we denote by $\mathcal{X}_{\eta}$ its generic fiber. We recall the following definition from \cite{LRT22}:

\begin{definition}[{\cite[Definition 3.1]{LRT22}}]
We say that a morphism $\pi: \mathcal{Z} \to B$ is a good fibration if:
\begin{enumerate}
\item $\mathcal{Z}$ is a smooth projective variety,
\item $B$ is a smooth projective curve, and
\item $\pi$ is flat and has connected fibers.
\end{enumerate}
\end{definition}

We then perform constructions from \cite[Section 8]{LRT22} in the absolute setting:

\begin{construction} \label{cons:allsubvarieties}
Let $X$ be a uniruled smooth projective variety defined over $k$ and $L$ be a big and semiample Cartier divisor on $X$. Set $a = a(X, L)$ and denote $X \times B$ by $\mathcal X$ and the pullback of $L$ to $\mathcal X$ by $L_B$.

Applying Construction \ref{cons:rigidsubvarieties} to $X$ we obtain a proper closed subset $V \subset X$ and a finite collection of families $p_{i} : U_{i} \to {W}_{i}$ whose smooth fibers are birational to closed subvarieties of $X$. Let $\mathcal U_i$ and $\mathcal W_i$ be $U_i \times B$ and $W_i\times B$ with the natural map $p_{i, B}: \mathcal{U}_{i} \to \mathcal{W}_{i}$ and let $\mathcal V$ denote $V\times B$. Let $\mathfrak{W}_i$ be $\Sec(\mathcal{W}_i/B)$. We define $\mathfrak W = \sqcup_i \mathfrak W_i$.
Let $W_i^\circ \subset W_i$ be a Zariski open subset such that over $W_i^\circ$, $p_i|_{p_i^{-1}(W_i)}$ is smooth.
We first shrink $\mathfrak{W}$ so that the generic point of every section parametrized by $\mathfrak{W}$ is contained in $W_{i, \eta}^\circ$. We enlarge $V$ by adding the images in $X$ of the fibers of $p_i$ over $W_i \setminus W_i^\circ$. While doing so, we continue to let $\mathcal V$ denote $V \times B$.
\end{construction}

\begin{construction} \label{cons:allfamilies}

Let $\sqcup_{G} \mathcal H(G, B)$ be the Hurwitz stack parametrizing pairs $(C \to B, \psi)$ where $C \to B$ is a Galois cover from a smooth projective curve $C$ and $\psi : \mathrm{Gal}(C/B) \cong G$ is an isomorphism of groups. (See \cite{Wewers} for the construction of such a stack as a Deligne-Mumford stack.) Fix an \'etale covering $\sqcup_G \mathcal H_G \to \sqcup_G \mathcal H(G, B)$ from a scheme.

Let $X$ be a uniruled smooth projective variety defined over $k$ and $L$ be a big and semiample Cartier divisor on $X$. Set $a = a(X, L)$.  We denote $X \times B$ by $\mathcal X$ and the pullback of $L$ to $\mathcal X$ by $L_B$.
Let $\mathfrak{Z} \to \mathfrak{W} \times B$ be the morphism constructed in Construction \ref{cons:allsubvarieties}.

By Theorem \ref{theo:ainvboundedandtwists} we obtain a closed set $R \subset {X}$ and a finite set of smooth projective $k$-varieties ${Y}_{i,j}$ equipped with dominant generically finite morphisms $h_{i, j}: {Y}_{i, j} \to U_{i}$ and dominant morphisms $r_{i,j}: Y_{i,j} \to T_{i,j}$.  Let $V$ be the union of $R$ with the closed set from Construction~\ref{cons:allsubvarieties}.
Then we enlarge $V$ by adding $s_i(B_{i, j})$ where $B_{i, j}$ is the union of the irreducible components of the branch locus of $h_{i, j}$.  We further enlarge ${V}$ by adding the Zariski closure of the union of the images of the fibers of $r_{i, j}$ which fail to be smooth, fail to have the same $a$-invariant as $Y_{i, j}$, or fail to be adjoint rigid.  We denote $V \times B$ by $\mathcal V$.

We then exactly repeat the remaining steps in \cite[Construction 8.4]{LRT22}.  The result is a family $\mathfrak{F} \to \mathfrak{S} \times B$ whose base is a countable union of finite type schemes and a morphism $g: \mathfrak{F} \to \mathfrak S \times \mathcal{X}$ such that
\begin{enumerate}
\item the fiber $\mathfrak{F}_{s}$ is a normal projective $B$-variety such that $\mathfrak F_s \to B$ has connected fibers for every closed point $s \in \mathfrak{S}$ ;
\item the map $g_{s}: \mathfrak{F}_{s} \to \mathcal{X}$ is a $B$-morphism that is generically finite onto its image and the corresponding morphism $\mathfrak F_s \to \mathfrak Z_w$ is a dominant finite morphism for every closed point $s \in \mathfrak{S}$;
\item we have $a(\mathfrak{F}_{s,\eta},g_{s}^{*}L|_{\mathfrak{F}_{s,\eta}}) = a$ and $(\mathfrak{F}_{s,\eta},g_{s}^{*}L|_{\mathfrak{F}_{s,\eta}})$ is adjoint rigid for every closed point $s \in \mathfrak{S}$, 
\item moreover if $\mathcal{Y}$ is a good fibration over $B$ and $f: \mathcal{Y} \to \mathcal{X}$ is a generically finite $B$-morphism such that  $a(\mathcal{Y}_{\eta},f^{*}L|_{\mathcal{Y}_{\eta}}) = a$ and $(\mathcal{Y}_{\eta},f^{*}L|_{\mathcal{Y}_{\eta}})$ is adjoint rigid, either the map $f$ is birationally equivalent to $g_{s}$ for some closed point $s$ in our family or $f(\mathcal{Y}_{\eta}) \subset \mathcal V_{\eta}$.
\end{enumerate}
We also have a family $\mathfrak Y \to \mathfrak D \times \mathcal X$ parametrizing integral models $h^\sigma_{i, j}:\mathcal Y^\sigma_{i, j} \to \mathcal U_i$ of twists $h^\sigma_{i, j, \eta}: \mathcal Y^\sigma_{i, j, \eta} \to \mathcal U_{i, \eta}$.
\end{construction}

The following theorem is essentially \cite[Theorem 8.7]{LRT22}. The only difference is that we can take our proper closed subset to be the base change of a proper closed subset on $X$.

\begin{theorem} \label{theo:positivebounded}
Let $X$ be a uniruled smooth projective variety defined over $\mathbb C$ and $L$ be a big and semiample Cartier divisor on $X$. Set $a = a(X, L)$.  Denote $X \times B$ by $\mathcal X$ and the pullback of $L$ to $\mathcal X$ by $L_B$.

Fix a rational number $\beta$.  Fix a positive integer $T$.  Fix a positive integer $b > a$ such that $bL'$ defines a basepoint free linear series.  There is:
\begin{itemize}
\item a constant $\xi^{\dagger} = \xi^{\dagger}(\dim(\mathcal{X}), g(B), a, T, \beta,b)$,
\item a proper closed subset ${V} \subset {X}$, and
\item a bounded family of smooth projective varieties $q: \widehat{\mathfrak{F}} \to \widehat{\mathfrak{S}}$ equipped with $\widehat{\mathfrak{S}}$-morphisms $p: \widehat{\mathfrak{F}} \to \widehat{\mathfrak{S}} \times B$ and $g: \widehat{\mathfrak{F}} \to \widehat{\mathfrak{S}} \times \mathcal{X}$
\end{itemize}
which have the following properties:
\begin{enumerate}
\item the morphism $\widehat{\mathfrak F}_s \to B$ is a good fibration for every closed point $s \in \widehat{\mathfrak{S}}$;
\item the morphism $g_{s}: \widehat{\mathfrak{F}}_{s} \to \mathcal{X}$ is a $B$-morphism that is  generically finite onto its image for every closed point $s \in \widehat{\mathfrak{S}}$ ;
\item the composition of $g|_{\widehat{\mathfrak{F}}_{i}}: \widehat{\mathfrak{F}}_{i} \to \widehat{\mathfrak{S}} \times \mathcal{X}$ with the projection $\widehat{\mathfrak{S}} \times \mathcal X \to \mathcal X \to X$ is dominant for every irreducible component $\widehat{\mathfrak{F}}_{i}$ of $\widehat{\mathfrak{F}}$;
\item we have $a(\widehat{\mathfrak{F}}_{s,\eta},g_{s}^{*}L_B|_{\widehat{\mathfrak{F}}_{s,\eta}}) = a({X},L)$ and $(\widehat{\mathfrak{F}}_{s,\eta},g_{s}^{*}L_B|_{\widehat{\mathfrak{F}}_{s,\eta}})$ is adjoint rigid for every closed point $s \in \widehat{\mathfrak{S}}$.
\item Assume that $\psi: \mathcal{Y} \to B$ is a good fibration equipped with a $B$-morphism $f: \mathcal{Y} \to \mathcal{X}$ that is generically finite onto its image and satisfies $a(\mathcal{Y}_{\eta},f^{*}L_B|_{\mathcal{Y}_{\eta}}) \geq a$.  Suppose that $N$ is an irreducible component of $\Sec(\mathcal{Y}/B)$ parametrizing a dominant family of sections $C$ on $\mathcal{Y}$ which satisfy $f^{*}L_B \cdot C \geq \xi$ and $f^{*}(K_{\mathcal{X}/B} + aL_B) \cdot C \leq \beta$.  Let $M \subset \Sec(\mathcal{X}/B)$ be the irreducible component containing the pushforward of the sections parametrized by $N$.  Finally, suppose that
\begin{equation*}
\dim(N) \geq \dim(M) - T.
\end{equation*}

For a general section $C$ parametrized by $N$, either:
\begin{itemize}
\item $f(C)$ is contained in $\mathcal{V} = V \times B$, or
\item there exist an irreducible component $\widehat{\mathfrak{F}}_{i}$ of $\widehat{\mathfrak{F}}$ and an irreducible component $N'$ of $\Sec(\widehat{\mathfrak{F}}_{i}/B)$ parametrizing a dominant family of sections on $\widehat{\mathfrak{F}}_{i}$ such that $f(C)$ is the image of a section $C'$ parametrized by $N'$ and if $\widehat{\mathfrak{F}}_{i,s}$ denotes the fiber containing $C'$ then the strict transform of $C'$ in a resolution of $\widehat{\mathfrak{F}}_{i,s}$ is HN-free.
\end{itemize}
\end{enumerate}
\end{theorem}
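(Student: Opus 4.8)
The plan is to run the proof of the relative analogues \cite[Theorems 8.7 and 8.10]{LRT22} essentially verbatim for the fibration $\mathcal X = X \times B$ with the big and semiample divisor $L'$, feeding in at each step the absolute refinements of the auxiliary statements established above. First I would invoke Construction \ref{cons:allfamilies}, which produces the family $\mathfrak F \to \mathfrak S \times B$ together with the $B$-morphism $g: \mathfrak F \to \mathfrak S \times \mathcal X$ satisfying properties (1)--(4) there, as well as the twist family $\mathfrak Y \to \mathfrak D \times \mathcal X$. The crucial feature, which is exactly the content of Constructions \ref{cons:allsubvarieties}--\ref{cons:allfamilies} and of Theorem \ref{theo:ainvboundedandtwists}, is that every closed set absorbed into the bad locus along the way --- the set from \cite[Theorem 4.19]{LST18}, the images in $X$ of the non-smooth fibers of the $p_i$, the images $s_i(B_{i,j})$ of the branch loci, the images of the degenerate fibers of the $r_{i,j}$, and the set $R$ of Theorem \ref{theo:ainvboundedandtwists} --- is a closed subset of $X$ rather than of $\mathcal X$, so that the total bad locus has the form $\mathcal V = V \times B$ for a closed $V \subsetneq X$. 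Properness of $V$ persists at every step: the initial set is proper by \cite[Theorem 4.19]{LST18}, and each subsequent addition is the image under a dominant generically finite morphism of a proper closed subset, with only finitely many additions.

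Next I would carry out the dimension-counting and boundedness argument of \cite[Section 8]{LRT22} to produce the constant $\xi^{\dagger}$ and to cut $\mathfrak F \to \mathfrak S \times B$ down to a bounded, finite type subfamily $\widehat{\mathfrak F} \to \widehat{\mathfrak S}$. That argument only uses intersection numbers against the section class, dimensions of the relevant moduli spaces, invariance of plurigenera, and the finiteness of Fujita invariants (Theorem \ref{theo:Dicerbo}) together with \cite[Theorem 7.10]{LRT22}-type input; none of this is sensitive to the absolute versus relative distinction, since at this stage $\mathcal X = X \times B$ is just one particular fibration over $B$. Properties (1), (2), and (4) for $\widehat{\mathfrak F}$ then follow from the corresponding parts of Construction \ref{cons:allfamilies}. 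To secure property (3) I would discard every irreducible component of $\widehat{\mathfrak F}$ whose composite map to $X$ is not dominant; since $\widehat{\mathfrak F}$ is of finite type there are only finitely many such components, each mapping into a proper closed subset of $X$, so enlarging $V$ by their union keeps $V \subsetneq X$ proper, and any section of a discarded component maps into $\mathcal V$, so nothing relevant to the dichotomy in (5) is lost.

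Finally, the universal property (5) is precisely the output of the relative result applied to $(\mathcal X, L')$: given a good fibration $\psi: \mathcal Y \to B$ with $f: \mathcal Y \to \mathcal X$ generically finite onto its image and $a(\mathcal Y_{\eta}, f^{*}L_B|_{\mathcal Y_{\eta}}) \geq a$, together with a dominant component $N \subset \Sec(\mathcal Y/B)$ of sections of large $f^{*}L$-degree, bounded $f^{*}(K_{\mathcal X/B}+aL)$-degree, and near-maximal dimension, one runs the canonical bundle formula and MMP analysis of \cite{LRT22} to factor $f$ birationally through one of the bounded models $\widehat{\mathfrak F}_i$; the only modification is that the locus one must avoid is now $\mathcal V = V \times B$ rather than an a priori arbitrary closed subset of $\mathcal X$, which is exactly what the bookkeeping of the first two steps provides.

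The genuinely delicate point --- and the reason this section is needed --- is the verification in the first step that \emph{every} closed set that \cite{LRT22} is forced to remove descends from $X$ and that their union is still a proper subset of $X$. Once the absolute forms of \cite[Construction 2.17]{LRT22} and \cite[Theorem 2.18]{LRT22}, namely Construction \ref{cons:rigidsubvarieties} and Theorem \ref{theo:ainvboundedandtwists}, are in place, this reduces to a line-by-line check that no step in \cite[Section 8]{LRT22} introduces a bad locus genuinely depending on the coordinate on $B$; the remainder of the proof is bookkeeping to track which absolute statement replaces which relative one.
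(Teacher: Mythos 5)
Your proposal is correct and follows essentially the same route as the paper: the paper's proof likewise feeds Constructions \ref{cons:allsubvarieties}--\ref{cons:allfamilies} and Theorem \ref{theo:ainvboundedandtwists} into the five-step argument of \cite[Theorem 8.7]{LRT22}, with the bad loci ($V_1$ from low $L$-degree subvarieties, $V_2$ from \cite[Theorem 4.18]{LST18}, the exceptional set of Construction \ref{cons:allfamilies}, and the images of the non-dominant components discarded in Step 4) all tracked as closed subsets of $X$, and with property (3) secured exactly as you describe. The remaining steps (twist classification, the bound on branch points via \cite[Lemma 6.3, Corollary 6.13]{LRT22}, and the verification of (5)) are quoted from the relative case unchanged, just as in your outline.
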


\begin{proof}
 We let $L'$ denote the sum of $L_B$ and the pullback of an ample line bundle of degree $1$ from $B$. Note that $L'$ is big and semiample on $\mathcal X$.
\textbf{Step 1:} Let $d$ be as in Step 1 of the proof of \cite[Theorem 8.7]{LRT22}.  Let $\sqcup_{G} \mathcal H(G, B)$ be the Hurwitz stack and fix an \'etale covering $\sqcup_{G, |G| \leq d} \mathcal H_G \to \sqcup_{G, |G| \leq d} \mathcal H(G, B)$ by a scheme. We will work over this base for the entire proof.

Note that the divisor $E = 0$ satisfies the condition of \cite[Proposition 7.1]{LRT22}.  Define $\xi = \xi(\dim(\mathcal{X}), g(B), 0, 1, a, T, \beta+a,b)$ as in \cite[Theorem 7.6]{LRT22}.  We then choose 
$$
\xi^{+} = \xi^{+}(\dim(\mathcal{X}), g(B), 0, 1, a, T, \beta + a, b) \quad \text{ and } \quad T^{+} = T^{+}(\dim(\mathcal{X}), g(B), 0, 1, a, T, \beta+a,b)
$$ 
as in \cite[Corollary 7.11]{LRT22}.  Define $\daleth = \daleth(\dim(\mathcal{X}), g(B), 1, a, T, \beta + a,b)$ as in \cite[Theorem 8.1]{LRT22}.  Finally we define $\xi^{\dagger} = \sup \left\{ \xi, \xi^{+} \right\}$.

Since $L$ is big and semiample, there is a closed subvariety ${V}_{1} \subset X$ such that the family of subvarieties of ${X}$ that are not contained in ${V}_{1}$ and have $L$-degree $\leq \daleth$ is bounded. By \cite[Theorem 4.18.(2)]{LST18} there is a closed sublocus ${V}_{2} \subset {X}$ that contains all subvarieties with larger generic Fujita invariant. Let $V_3$ be the exceptional closed set from Construction~\ref{cons:allfamilies}. We start by setting ${V}$ to be the union of ${V}_{1}$, ${V}_{2}$, and $V_3$; we will later enlarge it.

Let $\mathfrak{Z}_i \to \mathfrak{W}_i\times B$ be the families in Construction \ref{cons:allsubvarieties}.  Then there is a finite-type subscheme $\mathfrak{R}_i \subset \mathfrak{W}_i$ parametrizing those varieties whose images in $\mathcal{X}$ have $L'$-degree $\leq \daleth$ and are not contained in $\mathcal V$.  We define $\mathfrak{Z}_{i, \mathfrak R_i} \to \mathfrak{R}_i$ as the universal family over $\mathfrak{R}_i$. Set $\mathfrak R = \sqcup_i \mathfrak R_i$.

Let $\mathfrak{Y} \to \mathfrak{T} \to \mathfrak{D}$, $\mathfrak T' \to \mathfrak M$, $\mathfrak{F} \to \mathfrak{S}$ and $g: \mathfrak{F} \to \mathcal{X} \times B$ be defined as in Construction \ref{cons:allfamilies}. 
 We let $\mathfrak{S}'$ denote the sublocus of $\mathfrak{S}$ consisting of maps $g_{s}$ whose image is a member of our fixed bounded family $\mathfrak{Z}_{\mathfrak R} \to \mathfrak{R}$ and denote by $\mathfrak{F}' \to \mathfrak{S}'$ the corresponding family.

\textbf{Step 2:}  We next claim that there is a morphism $\mathfrak{Q}\to \mathfrak S'\subset \mathfrak S$ such that $\mathfrak Q$ is of finite type over $\mathbb C$ and for every map $g_{s}$ parametrized by $\mathfrak{S}'$ the map $g_{s,\eta}$ is a twist of the generic fiber of a map parametrized by $\mathfrak{Q}$.  Indeed, this follows from the discussion of Step 2 of \cite[Theorem 8.7]{LRT22} without any modification.

\textbf{Step 3:} Next we define an integer $t$ as in the discussion of Step 3 of \cite[Theorem 8.7]{LRT22}.

\textbf{Step 4:} Lemma 6.3 and Corollary 6.13 of \cite{LRT22}  show that as we vary the closed point $q \in \mathfrak{Q}$ the set of twists of $h_{q}: \mathfrak{P}_{q} \to \mathcal{Z}_{q}$ which are trivialized by a base change $B' \to B$ of degree at most $d$ and with at most $t+d(T+T^{+})$ branch points is parametrized by a bounded family.  We denote by $\widetilde{\mathfrak{F}}\to \widetilde{\mathfrak{S}}$ the bounded subfamily of $\mathfrak{F}' \to \mathfrak{S}'$ parametrizing maps $g_{s}: \mathfrak{F}_{s} \to \mathcal{Z}_{s}$ satisfying these properties. 
After taking smooth resolutions and stratifying the base, we obtain $\widehat{\mathfrak F} \to \widehat{\mathfrak S}$ such that each fiber is a good fibration over $B$.
We then shrink $\widehat{\mathfrak{S}}$ by removing all irreducible components $\mathfrak{S}_{j}$ such that the corresponding family $\widehat{\mathfrak{F}}_{j}$ fails to dominate $X$ and we enlarge $V$ by taking the union with the closures of the images of these families.

\textbf{Step 5:}  Finally the verification of the desired properties of $\widehat{\mathfrak{F}} \to \widehat{\mathfrak{S}}$ follows from Step 5 of \cite[Theorem 8.7]{LRT22} without any modification.
 \end{proof}

Now, we prove Theorem~\ref{theorem:non-dominant} using Theorem \ref{theo:positivebounded}:.

\begin{proof}[Proof of Theorem~\ref{theorem:non-dominant}]
It follows from \cite[Theorem 0.2]{KMM92} that there exists a constant $b'$ only depending on $\dim X$ such that $-b'K_X$ is base point free. Let $L= -K_X$. We apply Theorem~\ref{theo:positivebounded} with $\beta = 0, b = (2g(B) + 1)b', T = 0, a = 1$ to obtain $\xi^\dagger$, $V_1$, and a bounded family of good fibrations $\widehat{\mathfrak F} \to \widehat{\mathfrak S}$. Let $V_2 \subset X$ be the closure of the loci swept out by non-dominant families of curves $s : B \to X$ with $\deg (-s^*K_X) \leq \xi^\dagger$.  Note that since the parameter space for such curves has finite type, $V_{2}$ is a proper closed set of $X$.

We claim that $V = V_1 \cup V_2$ satisfies our assertion. Indeed, suppose that we have a non-dominant family $M \subset \mathrm{Mor}(B, X)$ of anticanonical degree $\geq \xi^\dagger$. Suppose that a general curve parametrized by $M$ is not contained in $V$. By the universal property, a general $C$ comes from a relatively free section $C'$ in a member of our bounded family of varieties $\widehat{\mathfrak F} \to \widehat{\mathfrak S}$. Then it follows from \cite[Lemma 8.5]{LRT22} that such $C'$ deforms to other varieties in our family so that it dominates an entire irreducible component of $\widehat{\mathfrak F}$. However, such a component maps dominantly to $X$. This is a contradiction. Thus every non-dominant family parametrizes curves in $V$.
\end{proof}

\section{Distinctions between relative and absolute case}

There are several ways in which the absolute case is fundamentally different from the relative case.  We briefly explain the key distinctions.

The first difference is the behavior of adjoint rigidity.  Suppose that $\pi: \mathcal{X} \to B$ is a Fano fibration and that $f: \mathcal{Y} \to \mathcal{X}$ is a generically finite map such that $\mathcal{Y}$ admits a family of sections of high degree that is ``large'' in the corresponding component on $\mathcal{X}$.   \cite[Theorem 7.10]{LRT22} shows that if these sections go through sufficiently many general points on $\mathcal{Y}$ then $(\mathcal{Y}_{\eta},-f^{*}K_{\mathcal{X}/B})$ is adjoint rigid.

The analogous statement in the absolute setting for maps $f: Y \to X$ is no longer true, as demonstrated by the following example.

\begin{example} \label{exam:notadjointrigid}
Let $X$ be a general quartic threefold.  For sufficiently positive $d>0$ we will construct a curve $B$ of genus $801$ and a component $N_{d}$ of $\Mor(B,X)$ such that the curves parametrized by $N_{d}$ sweep out a surface $S$ with the following properties:
\begin{enumerate}
\item $(S,-K_{X}|_{S})$ is not adjoint rigid.
\item The curves parametrized by $N_{d}$ have anticanonical degree $d$ and go through at least $\max\{0, 2d-1921\}$ general points of $S$
\end{enumerate}
Thus there is no bound we can impose on the number of general points contained in our curves which will force $S$ to be adjoint rigid.

Let $S$ denote the surface in $X$ swept out by lines.  By \cite{Tennison74} $S$ is contained in $|80H|$.  Furthermore, if we denote the universal family of lines by $g: Y \to B$ with evaluation map $f: Y \to X$ then $g$ is a $\mathbb{P}^{1}$-bundle over a smooth curve $B$ of genus $801$ and $f: Y \to S$ is birational.  

Let $\mathcal{E}$ denote the rank $2$ locally free sheaf on $B$ such that $Y = \mathbb{P}_{B}(\mathcal{E})$.  Then $K_{Y/B} = g^{*}c_{1}(\mathcal{E}) - 2\xi$ where $\xi$ is a divisor representing $\mathcal{O}_{Y/B}(1)$.  On the other hand, if we write $-f^{*}K_{X} \equiv aF + b\xi$ where $F$ denotes a fiber of $g$ then we have
\begin{equation*}
-f^{*}K_{X} \cdot F = 1 \implies b = 1 \qquad \qquad (-f^{*}K_{X})^{2} = 320 \implies b^{2}c_{1}(\mathcal{E}) + 2ab = 320
\end{equation*}
so that $-f^{*}K_{X} \sim_{num} \xi + 160F - \frac{1}{2}g^{*}c_{1}(\mathcal{E})$.  
Thus $K_{Y/B} - 2f^{*}K_{X} \sim_{num} 320F$.  In particular, since $f: Y \to S$ is birational we conclude that $(S,-K_{X}|_{S})$ is not adjoint rigid.

When $d$ is sufficiently large there is an irreducible component $N_{d} \subset \Sec(Y/B)$ parametrizing sections of $g$ satisfying $-f^{*}K_{X} \cdot C = d$. Then we have
\begin{align*}
\dim(N_{d}) & \geq -K_{Y/B} \cdot C + (1-g(B)) \\
& = (2d - 320) + (1-g(B))
\end{align*}
Let us show that $N_{d}$ is also an irreducible component of $\Mor(B,X)$.  If the curves deformed out of $Y$ then they would lie in an irreducible component $M$ parametrizing a dominant family.  Then we would have
\begin{align*}
\dim(M) & \leq d + 3(1-g(B)) + h^{1}(B,s^{*}T_{X}) \\
& \leq d + 3(1-g(B)) + h^{1}(B,T_{B}) + h^{1}(B,s^{*}T_{X}/T_{B}) \\
& \leq d + 3(1-g(B)) + (3g(B)) + (2g(B))
\end{align*}
where the final inequality follows from Lemma \ref{lemm:ggh1bound}.  Since this is less than $\dim(N_{d})$, we conclude that when $d$ is sufficiently large $N_{d}$ is an irreducible component of $\Mor(B,X)$.  By \cite[Lemma 3.6]{LRT22} the sections parametrized by $N_{d}$ go through at least $(2d-320) - 2g(B) + 1$ general points of $Y$.
\end{example}

Rather, the correct statement is the following.

\begin{theorem} \label{theo:absadjrigid}
Let $X$ be a smooth projective Fano variety and let $B$ be a smooth projective curve.  Fix a positive integer $T$.  There is some constant $\Gamma = \Gamma(\dim(X), g(B),T)$ with the following property.

Suppose that $f: Y \to X$ is a morphism that is generically finite onto its image and $N$ is an irreducible component of $\Mor(B,Y)$ parametrizing a dominant family of curves $C$ on $Y$ such that
\begin{equation*}
\dim(N) \geq -K_{X} \cdot C + \dim(X)(1-g(B)) - T.
\end{equation*}
Suppose that $a(Y,-f^{*}K_{X}) = 1$.  Then either:
\begin{enumerate}
\item $(Y,-f^{*}K_{X})$ is adjoint rigid, or
\item deformations of the corresponding sections on $Y \times B$ go through at most $\Gamma$ general points of $Y \times B$.
\end{enumerate}
\end{theorem}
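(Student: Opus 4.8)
The plan is to deduce the statement from the relative adjoint-rigidity theorem \cite[Theorem 7.10]{LRT22} (or its quantitative refinement \cite[Corollary 7.11]{LRT22}) by applying it to the \emph{trivial} fibration $Y\times B\to B$. This is exactly the reduction that is available here but unavailable for non-free curves in Theorem~\ref{theo:maintheoremabscase}: a section of $Y\times B\to B$ is simply the graph of a morphism $B\to Y$, so no ``non-birational'' phenomena intervene, and the generic fibre $(Y\times B)_\eta=Y_{K(B)}$ has the same Fujita invariant and the same adjoint Iitaka dimension as $Y$ itself.

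First I would dispose of the easy alternative: if the sections on $Y\times B$ parametrized by $N$ do not pass through more than $\Gamma$ general points of $Y\times B$, then conclusion (2) holds. So assume they pass through $\geq\Gamma+1$ general points. For $\Gamma\geq 2g(B)$, Proposition~\ref{prop:generalimplieshnfree} makes a general $s\colon B\to Y$ in $N$ HN-free, so by Lemma~\ref{lemma:hnfreecurves} the sheaf $s^{*}T_{Y}$ is globally generated, $H^{1}(B,s^{*}T_{Y})=0$, and $N$ is smooth at $s$ with $\dim(N)=h^{0}(B,s^{*}T_{Y})=-K_{Y}\cdot C+\dim(Y)(1-g(B))$. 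The usual incidence-variety count shows $\dim(N)\geq(\Gamma+1)\dim(Y)$, so $-K_{Y}\cdot C\geq\Gamma+1-\dim(Y)$; and since $a(Y,-f^{*}K_{X})=1$ makes $K_{Y}-f^{*}K_{X}$ pseudo-effective while $[C]$ lies in the closure of the movable cone (\cite{BDPP13}), we get $(K_{Y}-f^{*}K_{X})\cdot C\geq 0$, hence $-f^{*}K_{X}\cdot C\geq-K_{Y}\cdot C\geq\Gamma+1-\dim(X)$. Thus, taking $\Gamma$ large, the degree $-f^{*}K_{X}\cdot C$ is as large as we wish.

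Next I would set $\mathcal{X}:=X\times B$ and $\mathcal{Y}:=Y\times B$, viewed as fibrations over $B$ via the second projection, and let $f$ also denote the $B$-morphism $f\times\mathrm{id}_{B}\colon\mathcal{Y}\to\mathcal{X}$. A computation with the product shows $K_{\mathcal{X}/B}=\pr_{X}^{*}K_{X}$; writing $L_{B}$ for the pullback of $-K_{X}$ to $\mathcal{X}$, this says $K_{\mathcal{X}/B}+L_{B}=0$, so $-f^{*}K_{\mathcal{X}/B}=f^{*}L_{B}$ restricts on $\mathcal{Y}_{\eta}=Y_{K(B)}$ to the base change of $f^{*}(-K_{X})$, whence $a(\mathcal{Y}_{\eta},-f^{*}K_{\mathcal{X}/B}|_{\mathcal{Y}_{\eta}})=a(Y,-f^{*}K_{X})=1$ and $f^{*}(K_{\mathcal{X}/B}+L_{B})\cdot C=0$. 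The component $N$ is then a component of $\Sec(\mathcal{Y}/B)$ parametrizing a dominant family of sections of the (now large) degree $-f^{*}K_{X}\cdot C$ that meet $\geq\Gamma+1$ general points of $\mathcal{Y}$. Letting $M\subset\Sec(\mathcal{X}/B)=\Mor(B,X)$ be the component containing their pushforwards, one bounds $\dim(M)\leq-f^{*}K_{X}\cdot C+\dim(X)$, using that $(f\circ s)^{*}T_{X}$ is generically globally generated (it receives the globally generated $s^{*}T_{Y}$) together with Lemma~\ref{lemm:ggh1bound}; combined with the hypothesis on $\dim(N)$ this yields $\dim(N)\geq\dim(M)-T'$ for a constant $T'=T'(\dim(X),g(B),T)$. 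These are precisely the hypotheses of \cite[Theorem 7.10]{LRT22}, equivalently \cite[Corollary 7.11]{LRT22}, for base curve $B$, Fano fibration $\mathcal{X}$, morphism $f$, and parameters $a=1$, $\beta=0$, $T'$, which supply thresholds $\Gamma_{0}$ and $\xi$ depending only on $\dim(X),g(B),T$ so that once the degree exceeds $\xi$ and the sections meet $\geq\Gamma_{0}$ general points of $\mathcal{Y}$, the pair $(\mathcal{Y}_{\eta},-f^{*}K_{\mathcal{X}/B}|_{\mathcal{Y}_{\eta}})$ is adjoint rigid. Taking $\Gamma$ larger than $\Gamma_{0}$, than $2g(B)$, and than $\xi+\dim(X)-1$ makes all of this apply; and since adjoint Iitaka dimension and Fujita invariant are unchanged under extension of the ground field, adjoint rigidity of $(\mathcal{Y}_{\eta},-f^{*}K_{\mathcal{X}/B}|_{\mathcal{Y}_{\eta}})=(Y_{K(B)},(f^{*}(-K_{X}))_{K(B)})$ is equivalent to adjoint rigidity of $(Y,-f^{*}K_{X})$, which is conclusion (1).

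I expect the main obstacle to be conceptual rather than computational: recognizing that the correct absolute statement is just the relative theorem applied to $Y\times B\to B$, and checking that \cite[Theorem 7.10]{LRT22} really does apply even though $\mathcal{Y}=Y\times B$ is only a good fibration over $B$ and not a Fano fibration. The remaining ingredients — the identity $K_{\mathcal{X}/B}=\pr_{X}^{*}K_{X}$, the base-change invariance of adjoint rigidity, and the inequality $\dim(N)\geq\dim(M)-T'$ with $T'$ depending only on $\dim(X),g(B),T$ — are routine; the one point requiring a little care is the bound on $h^{1}(B,(f\circ s)^{*}T_{X})$, which relies on the HN-freeness of $s$ forced by the many-general-points hypothesis.
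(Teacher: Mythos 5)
Your proposal takes essentially the same route as the paper: the paper's entire proof is a one-line application of \cite[Theorem 7.10]{LRT22} to the trivial fibrations $Y\times B\to B$ and $X\times B\to B$ with $a_{rel}=a=1$, $\beta=0$, $E=0$, and a suitable $b$ depending only on $\dim(X)$, which is exactly your reduction; your preliminary bookkeeping (HN-freeness from the many-general-points hypothesis, the resulting lower bound on $-f^{*}K_{X}\cdot C$, the identity $K_{X\times B/B}=\pr_{X}^{*}K_{X}$, and base-change invariance of the Fujita invariant and of adjoint rigidity) is correct and just makes explicit why the relative hypotheses hold.

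One sub-step deserves a caveat. To convert the stated hypothesis into an inequality of the form $\dim(N)\geq\dim(M)-T'$ you bound $\dim(M)$ from above by claiming that $(f\circ s)^{*}T_{X}$ is generically globally generated because it receives the globally generated $s^{*}T_{Y}$. That implication only holds when $f$ is dominant, since only then is $df\colon s^{*}T_{Y}\to(f\circ s)^{*}T_{X}$ generically surjective; the theorem allows $f$ to be generically finite onto a proper subvariety of $X$, in which case the pushed-forward family on $X$ is not dominant and neither Lemma~\ref{lemm:ggh1bound} nor Proposition~\ref{prop:domfamilyexpdim} controls $\dim(M)$ this way. Fortunately this detour is not needed: the hypothesis of the theorem is already phrased against the expected dimension $-K_{X}\cdot C+\dim(X)(1-g(B))$, which (after the identification $K_{X\times B/B}=\pr_{X}^{*}K_{X}$) is precisely the relative expected dimension of sections of $X\times B\to B$, and this is the form in which the dimension condition enters \cite[Theorem 7.10]{LRT22} — as the paper's ``follows immediately'' indicates. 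So either drop the comparison with $\dim(M)$ altogether, or, if you insist on it, restrict that argument to the case where $f$ is dominant and handle the non-dominant case by quoting the expected-dimension form of the relative theorem directly.
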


\begin{proof}
This follows immediately from \cite[Theorem 7.10]{LRT22} applied with $a_{rel} = 1$, $\beta = 0$, $E=0$, and $b$ a positive integer only depending on $\dim(X)$ chosen so that $|-bK_{X}|$ is very ample.
\end{proof}

A second difference between the relative and absolute settings is the formulation of boundedness statements. Loosely speaking, \cite[Theorem 8.8]{LRT22} shows that in the relative setting all non-free curves can be accounted for by the union of a closed set and twists of a finite set of dominant morphisms.  It is natural to wonder if in the absolute setting we can ``remove the twists'': does a Fano variety $X$ admit a closed set and a finite collection of generically finite maps $f: Y \to X$ which account for all non-free curves?  The following example answers this question in the negative.

\begin{example} \label{exam:needtwists}
Let $X$ be the Fano threefold $\mathbb{P}_{\mathbb{P}^{2}}(\mathcal{O} \oplus \mathcal{O}(1))$ equipped with the projective bundle map $g: X \to \mathbb{P}^{2}$.  We will let $H$ denote a divisor representing $g^{*}\mathcal{O}(1)$ and let $E$ denote the rigid section of $g$.  By \cite[Lemma 5.2, Theorem 5.3, Theorem 5.5]{BLRT20} every dominant map $f: Y \to X$ satisfying $a(Y,-f^{*}K_{X}) = a(X,-K_{X})$ will be birationally equivalent to a projection map $\widehat{f}: S \times_{\mathbb{P}^{2}} X \to X$ induced by a generically finite morphism $\psi: S \to \mathbb{P}^{2}$.

Let $B$ be a general hyperelliptic genus $8$ curve so that $B$ admits a unique degree $2$ morphism $h: B \to \mathbb{P}^{1}$ up to automorphisms of $\mathbb{P}^{1}$. We let $\mathcal{L}$ denote the degree $2$ line bundle defining these morphisms. 
Let $M_{d}$ denote the closure of the sublocus of $\Mor(B,X)$ parametrizing maps $s: B \to X$ such that $s$ is birational onto its image, $g \circ s$ is a $2:1$ map onto a conic in $\mathbb{P}^{2}$, and $\deg(-s^{*}K_{X}) = d$. Note that $M_{d}$ is non-empty when $d$ is sufficiently large.  Indeed, the product $G = B \times_{\mathbb{P}^{2}} X$ is a $\mathbb{P}^{1}$-bundle and so admits sections of large degree and we can simply take the image of these sections in $X$. For $d$ large enough, we see that the normal bundle $N_{s/G}$ will have very large degree, so that $N_{s/G}(-p)$ will be globally generated for any $p$, and hence the curves in $M_d$ which pass through $p$ will not all pass through some other point $q$ of $G$. It follows that a general $s$ is birational onto its image in $X$. 

We claim that when $d$ is sufficiently large then $M_{d}$ is an irreducible component of $\Mor(B,X)$ which parametrizes a dominant family of non-free curves.  Suppose the maps parametrized by $M_{d}$ were not dense in an irreducible component.  Then when we compose the general map in this component with $g$ we would obtain a birational morphism $B \to \mathbb{P}^{2}$ onto a degree $4$ curve. But this is not possible since the genus of $B$ is too large.  To show that the general map $s$ parametrized by $M_{d}$ is not free, note that the surjection $T_{X} \to g^{*}T_{\mathbb{P}^{2}}$ yields a surjection $s^{*}T_{X} \to h^{*}(\mathcal{O}(3) \oplus \mathcal{O}(3))$.  Thus
\begin{equation*}
h^{1}(B,s^{*}T_{X}) \geq h^{1}(B,\mathcal{L}^{\otimes 3}) \geq (g(B)-1)-3\deg(\mathcal{L}) > 0
\end{equation*}
and so $s$ is not free.

Next we show that when $d$ is sufficiently large there is no dominant generically finite map $f: Y \to X$ of degree $\geq 2$ and no irreducible component $N \subset \Mor(B,Y)$ such that $f_{*}: N \to M_{d}$ is dominant.  If there were such a map, then Theorem \ref{theo:ainvariantandsectionsabsolutecase} implies that $a(Y,-f^{*}K_{X}) = a(X,-K_{X})$.  Thus $Y$ is birationally equivalent to the projection $S \times_{\mathbb{P}^{2}} X \to X$ for some generically finite map $\psi: S \to \mathbb{P}^{2}$ of degree $\geq 2$.  After replacing $Y$ by a birational model and $N$ by a family of strict transforms, we may assume that $Y$ admits a morphism to $S$.

Consider the images on $S$ of the curves parametrized by $N$ on $Y$.  There are two cases:
\begin{enumerate}
\item The image on $S$ of the general curve parametrized by $N$ is rational.  Then $S$ carries a family $R$ of rational curves $C$ such that $\psi|_{C}$ is an isomorphism 

and $\psi_{*}(R)$ is dense in the family of conics on $\mathbb{P}^{2}$.  Since the preimage of a general conic is irreducible by the Bertini theorem, the only possibility is that $\psi: S \to \mathbb{P}^{2}$ is also degree $1$ and hence birational.  This contradicts our assumption that $\deg(f) \geq 2$.

\item The image on $S$ of the general curve parametrized by $N$ is birational to $B$.  Then $S$ carries a family $R$ of curves $C$ birational to $B$ such that $\psi|_{C}$ is a $2:1$ cover of a conic in $\mathbb{P}^{2}$ and $\psi_{*}(R)$ is dense in the family of conics on $\mathbb{P}^{2}$.  Note that the $\psi$-preimage of a general conic is smooth by the Bertini theorem so that a general curve parametrized by $R$ is isomorphic to $B$.

Let $D$ denote the branch divisor of $\psi$.  Let $U$ denote the parameter space of conics in $\mathbb{P}^{2}$.  As we vary  $Q \in U$ the intersection $D \cap Q$ defines a morphism $U \to \overline{M}_{0,2\deg(D)}$ where the latter space parametrizes stable genus $0$ curves with $2 \deg(D)$ marked points.  We claim that the image of this map has dimension at least $1$; indeed, some conics are tangent to $D$ while others will meet $D$ transversally at distinct points.  However, the ramification divisor for the $2:1$ cover $B \to \mathbb{P}^{1}$ corresponds to a unique point in $\overline{M}_{0,2 \deg(D)}$ (since any two such maps are related by an automorphism of $\mathbb{P}^{1}$).  Thus it is impossible for the $\psi$-preimage of every general conic to be isomorphic to $B$.  
\end{enumerate}
Together (1) and (2) show the impossibility of a morphism $f: Y \to X$ and an irreducible component $N \subset \Mor(B,Y)$ as above.  It follows that there is not a finite set of dominant generically finite maps $\{ f_{i}: Y_{i} \to X\}$ of degree $\geq 2$ such that a general curve parametrized by $M_{d}$ can be obtained by composing $f_{i}$ with a map $s: B \to Y_{i}$.
\end{example}

\bibliographystyle{alpha}
\bibliography{absolutecase}

\end{document}